\documentclass[a4paper,12pt,reqno]{amsart}
\usepackage[left=2cm,right=2cm,top=2.5cm,bottom=3cm]{geometry}
\usepackage[foot]{amsaddr}
\usepackage{amsmath,tensor}
\usepackage{dsfont}
\usepackage{enumitem}
\usepackage{color}
\usepackage{booktabs,tabulary}
\usepackage{graphicx,xcolor}
\usepackage{todonotes}



\usepackage[
	bookmarks,
	bookmarksopen,
	bookmarksopenlevel=0,
	bookmarksnumbered,
	breaklinks,
	ocgcolorlinks,
	colorlinks=true,
	citecolor=blue,
	linktoc=all
	]{hyperref}

\usepackage{tikz}
\usetikzlibrary{arrows.meta}
\definecolor{plum}{rgb}{0.36078, 0.20784, 0.4}
\definecolor{chameleon}{rgb}{0.30588, 0.60392, 0.023529}
\definecolor{cornflower}{rgb}{0.12549, 0.29020, 0.52941}
\definecolor{scarlet}{rgb}{0.8, 0, 0}
\definecolor{brick}{rgb}{0.64314, 0, 0}
\definecolor{sunrise}{rgb}{0.80784, 0.36078, 0}
\definecolor{lightblue}{rgb}{0.15,0.35,0.75}
\definecolor{carolina}{RGB}{153, 186, 221}
\tikzstyle{axisarrow} = [-{Latex[inset=0pt,length=5pt]}]

\newtheorem{theorem}{Theorem}
\newtheorem{proposition}{Proposition}

\newtheorem{lemma}{Lemma}
\newtheorem{fact}{Fact}
\newtheorem{definition}{Definition}
\newtheorem{example}{Example}

\newcommand{\lie}{\ensuremath{\mathcal L}}
\newcommand{\RR}{\ensuremath{\mathbb{R}}}
\newcommand{\CC}{\ensuremath{\mathbb{C}}}
\newcommand{\by}{\ensuremath{\times}}
\newcommand{\del}{\ensuremath{\partial}}
\newcommand{\diff}[2]{\ensuremath{\frac{\del #1}{\del #2}}}
\newcommand{\projalg}{\ensuremath{\mathfrak{p}}}
\newcommand{\homalg}{\ensuremath{\mathfrak{h}}}
\newcommand{\veps}{\ensuremath{\varepsilon}}
\newcommand{\barz}{\ensuremath{\bar z}}
\newcommand{\tr}{\ensuremath{\mathop{tr}}}
\newcommand{\Id}{\ensuremath{\mathds{1}}}

\usepackage[backend=bibtex,
			bibencoding=ascii,
			giveninits=true,
			url=false,
			isbn=false,
			doi=false,
			style=numeric,
			maxnames=5]{biblatex}
\addbibresource{symmetry-algebras.bib}

\begin{document}
	
\title{The c-projective symmetry algebras of K\"ahler surfaces}
\author{G.~Manno*, J.~Schumm*, A.~Vollmer*$^\dagger$}

\email{giovanni.manno@polito.it, jsmaths@gmx.net, andreas.vollmer@uni-hamburg.de}

\address{* DISMA, Politecnico di Torino, Corso Duca degli Abruzzi 24, 10129 Torino, Italy}
\address{$^\dagger$ FB Mathematik, Universit\"at Hamburg, Bundesstra{\ss}e 55, 20146 Hamburg, Deutschland}

\begin{abstract}
Let $M$ be a K\"ahler manifold  with complex structure $J$ and K\"ahler 
metric $g$. A c-projective vector field is a vector field on $M$ whose flow 
sends $J$-planar curves to $J$-planar curves, where $J$-planar curves are 
analogs of what (unparametrised) geodesics are for pseudo-Riemannian 
manifolds (without complex structure). The c-projective symmetry algebras of 
K\"ahler surfaces with essential (i.e., non-affine) c-projective vector 
fields are computed.
\end{abstract}

\maketitle

\section{Introduction}

Studying infinitesimal transformations of $J$-planar curves is a natural
question that has received increased attention during the last two decades; a good summary is \cite{CEMN2020}.
Historically, the question can be traced back to classical mathematicians
like, for instance, Lie \cite{lie_1882,lie_1883}, Painlevé 
\cite{painleve_1894}, Beltrami \cite{beltrami_1865} and Levi-Civita 
\cite{levi-civita_1896} who investigated transformations of the differential 
equations that describe trajectories of Hamiltonian systems up to 
reparametrisations.

An infinitesimal symmetry of a system of ordinary differential equations
(ODEs) is a vector field on the space of dependent and independent variables
whose local flow sends solutions to solutions. It is well known that the
set of infinitesimal symmetries of a given ODE has the structure of a Lie
algebra \cite{olver2000applications}. Studying the symmetry Lie algebras for ODEs is a powerful technique
by which the equivalence of two ODEs can be investigated, i.e.,~whether one
can be mapped into the other by a change of variables. Of course, the dimension of the
symmetry Lie algebras of equivalent ODEs is the same.
For instance, take the ODEs (we denote derivatives by subscripts)
\begin{equation}\label{eqn:ode.2.dim}
	y_{xx}=-e^{-2x}y_x^3
\end{equation}
and
\begin{equation}\label{eqn:ode.3.dim}
	y_{xx}=\frac12 y_x-\frac12 e^{-2x}y_x^3\,.
\end{equation}
They are not equivalent as~\eqref{eqn:ode.2.dim} has a 2-dimensional
symmetry Lie algebra generated by
\begin{equation*} 
	\partial_y\quad\text{and}\quad\partial_x+y\partial_y\,,
\end{equation*}
while \eqref{eqn:ode.3.dim} has a 3-dimensional symmetry Lie algebra
generated by
\begin{equation*} 
	\partial_y\,,\quad \partial_x+y\partial_y
	\quad\text{and}\quad y\partial_x+\frac12 y^2\partial_y\,.
\end{equation*}
In other words, there is no change of variables $(x,y)\to
(x_\text{new}=x_\text{new}(x,y),y_\text{new}=y_\text{new}(x,y))$ sending
\eqref{eqn:ode.2.dim} into
\eqref{eqn:ode.3.dim}. Furthermore, neither \eqref{eqn:ode.2.dim} nor
\eqref{eqn:ode.3.dim} can be mapped, by a change of variables, to
\begin{equation}\label{eqn:ode.8.dim}
	y_{xx}=0
\end{equation}
as this latter ODE admits an 8-dimensional Lie algebra of symmetries, whose
generators are
\begin{equation}\label{eqn:8.dim.Lie.alg}
	\partial_x\,,\quad\partial_y\,,\quad x\partial_x\,,\quad
	x\partial_y\,,\quad y\partial_x\,,\quad y\partial_y\,,\quad 
	x^2\partial_x+xy\partial_y\quad \text{and}\quad 
	xy\partial_x+y^2\partial_y\,.
\end{equation}
These examples are, of course, not chosen randomly, but are geometrically
motivated. Indeed, \eqref{eqn:ode.2.dim}, \eqref{eqn:ode.3.dim}
and~\eqref{eqn:ode.8.dim} are the equations of ``unparametrised geodesics''
for certain Riemannian metrics.
Specifically, \eqref{eqn:ode.8.dim} is the equation of \emph{lines} of
$\mathbb{R}^2$, i.e.~the geodesic curves of the Euclidean metric $dx^2+dy^2$.
Generators of the \emph{projective algebra} of $\mathbb{R}^2$ are given by
\eqref{eqn:8.dim.Lie.alg}: the local flow of any generator of
\eqref{eqn:8.dim.Lie.alg} sends lines of $\mathbb{R}^2$ into lines of
$\mathbb{R}^2$.
Likewise, Equation \eqref{eqn:ode.2.dim} is realised as the equation of
unparametrised geodesics for the metric $e^{4x}dx^2+e^{2x}dy^2$, Equation
\eqref{eqn:ode.3.dim} is realised as the equation of unparametrised
geodesic of $e^{3x}dx^2+e^xdy^2$. It follows, as a byproduct, that the
metrics $e^{4x}dx^2+e^{2x}dy^2$, $e^{3x}dx^2+e^{x}dy^2$ and $dx^2+dy^2$ are
pairwise non-isometric.

Let us return to~\eqref{eqn:ode.8.dim}.
By a suitable change of variables, \eqref{eqn:ode.8.dim} transforms into an ODE describing segments of \emph{great circles} of the sphere. This insight is due to
Liouville~\cite{liouville_1889} and illustrated by Figure~\ref{fig:beltrami}.
In fact, up to changing variables, the ODE~\eqref{eqn:ode.8.dim} is the
equation of unparametrised geodesics of any 2-dimensional metric with
constant curvature.
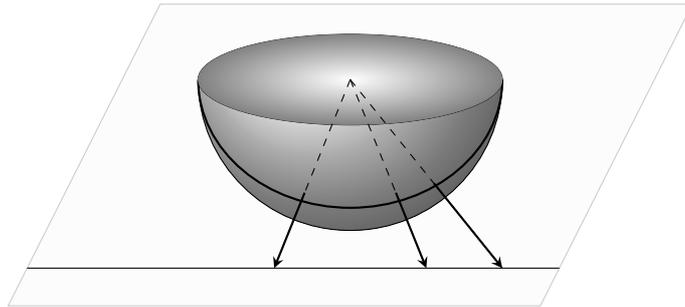
\begin{figure}\label{fig:beltrami}
	\centering
	\begin{tikzpicture}
		\begin{scope}[xslant=0.5]
			\filldraw[fill=gray!10,opacity=0.2] (-3,1) -- (4,1) -- (4,-3) -- (-3,-3) -- cycle;
			\draw[thin] (-3,-2.5) -- (4,-2.5);
		\end{scope}
		\begin{scope}
			\clip(-3,-3) -- (-3,0) -- (-2,0) arc (180:360:2cm and 0.6cm) -- (3,0) -- (3,-3) -- (-3,-3);
			\filldraw[ball color=white] (0,0) circle [radius=2cm];
		\end{scope}
		\draw[black,thick] (2,0) arc [start angle=0,end angle=-180,x radius=2cm,y radius=17mm];
		\draw[solid] (2,0) arc [start angle=0,end angle=180,x radius=2cm,y radius=6mm];
		\draw (2,0) arc [start angle=0,end angle=-180,x radius=2cm,y radius=6mm];
		\shade[shading=radial, inner color=white, outer color=white!50!black, opacity=0.70] (2,0) arc (0:360:2cm and
		0.6cm);
		\draw[black,dashed] (0,0) -- (2,-2.5);
		\draw[black,dashed] (0,0) -- (1,-2.5);
		\draw[black,dashed] (0,0) -- (-1,-2.5);
		\draw[black,thick,-stealth] (1.1,-1.375) -- (2,-2.5);
		\draw[black,thick,-stealth] (0.6,-1.5) -- (1,-2.5);
		\draw[black,thick,-stealth] (-0.6,-1.5) -- (-1,-2.5);
	\end{tikzpicture}
	\caption{The gnomonic (central) projection maps great circles to lines.}
\end{figure}
\medskip

In the present paper we ask a very similar question, yet in a different
context. Before we were speaking about geodesics and (pseudo-)Riemannian metrics (i.e., of arbitrary signature).
Now we consider $J$-planar curves and K\"ahler metrics (of arbitrary signature) instead. The analog of projective symmetry algebras, in this context, are the so-called c-projective algebras (see below for a proper introduction). Our aim is to obtain, explicitly, the full, non-trivial\footnote{By ``non-trivial'' we mean the existence of an essential (non-affine)
	c-projective vector field, see below Def.~\ref{def:c-projective}.} c-projective 
algebras of K\"ahler surfaces.

\subsection{Preliminaries}

Let $(M,J,g)$ be a K\"ahler manifold of real dimension $2n$ and arbitrary
signature. This means that $M$ is a differentiable manifold with
(pseudo-)Riemannian metric $g$ and complex structure $J$, such that $\nabla
J=0$, where $\nabla$ is the Levi-Civita connection of $g$, and the 2-form
$\omega$ on $M$ defined by $\omega(X,Y)=g(J(X),Y)$ is a symplectic form. In
this context, the concept of unparametrised geodesics is replaced by that of
$J$-planar curves.
\begin{definition}
Let $(M,J,g)$ be a K\"ahler manifold. Let $\nabla$ be its Levi-Civita
connection. A curve $\gamma:I\subseteq\mathbb{R}\to \gamma(t)\in M$ such that
$$
	\nabla_{\dot\gamma}\dot\gamma\wedge\dot\gamma\wedge J\dot\gamma = 0
$$
is called a \emph{$J$-planar curve}.
\end{definition}
\noindent
It is easily verified that this definition is independent of parametrisation.
The following definition captures the corresponding infinitesimal symmetries.
\begin{definition}\label{def:c-projective}
Let $(M,J,g)$ be a K\"ahler manifold. A vector field is called \emph{c-projective} if its local flow maps
$J$-planar curves into $J$-planar curves.
\end{definition}
It is well known that the c-projective vector fields of a K\"ahler
manifold form a Lie algebra, see for instance~\cite{CEMN2020}, which we denote by $\projalg(g)$, suppressing $J$ in the notation.
\emph{Affine} vector fields, i.e.\ vector fields that preserve the Levi-Civita connection of~$g$ and the complex structure~$J$, are simple examples of c-projective vector fields. 
C-projective vector fields that are not affine are called \emph{essential}.
We need to introduce two more concepts before we are able to state the main fact that serves as point of departure for the present paper.
\begin{definition}
	Let $g,\hat g$ be two K\"ahler metrics on the same complex manifold $(M,J)$. They are called \emph{c-projectively equivalent} if they share the same $J$-planar curves.
	%
	The equivalence class of all K\"ahler surfaces that are c-projectively
	equivalent is called a \emph{c-projective K\"ahler structure} and denoted by $(M,J,[g])$.
\end{definition}
\noindent Provided $n\geq2$, a c-projective vector field automatically preserves the complex structure, c.f.~\cite{BMMR2015}.
We also recall the definition of holomorphic sectional curvature (HSC), which
is a restriction of the usual sectional curvature to $J$-invariant planes.
\begin{definition}
	Let $g$ be a K\"ahler metric with Riemann curvature $R$.
	The HSC of a $J$-invariant plane $\Pi=\langle v,Jv\rangle$ in $p\in M$ is 
	defined by
	$$ K(\Pi) = \frac{R(v,Jv,v,Jv)}{||v||^4}\,, $$
	where $v\in T_pM$ is a vector at $p$.
	If the map $\Pi\mapsto K(\Pi)$ is constant and independent of $p$, then 
	we say that $g$ has	constant HSC.	
\end{definition}

\noindent In \cite{BMMR2015} a non-sharp local description is obtained for c-projective K\"ahler structures admitting essential c-projective vector fields.
We remark that the list does not claim to be sharp, and that indeed some parameter configurations would not lead to a metric. Such configurations are tacitly excluded from now on without further mentioning.

\begin{fact}[\cite{BMMR2015}]\label{fct:metrics}
	Let $(M,J,g_0)$ be a K\"ahler surface of non-constant HSC and with an
	essential c-projective vector field.
	Then, in a neighborhood of almost every point of $M$, there are local
	coordinates such that a certain metric $g\in[g_0]$ and its K\"ahler form $\omega$ defined by
	$\omega(X,Y)=g(J(X),Y)$ are as in one of the cases below. 
	\begin{enumerate}[label=(\roman*)]
		\item	\textbf{Liouville type:}
				In the new coordinates $(x_0,x_1,s_0,s_1)$,
				\begin{align*}
					g &= (\rho_0-\rho_1)(F_0^2dx_0^2+\veps F_1^2dx_1^2) \\
						&\quad +\frac1{\rho_0-\rho_1}\left[
								\left(\frac{\rho_0'}{F_0}\right)^2(ds_0+\rho_1ds_1)^2
								+\veps \left(\frac{\rho_1'}{F_1}\right)^2(ds_0+\rho_0ds_1)^2
							\right]
					\\
					\omega &= \rho_0'\,dx_0\wedge(ds_0+\rho_1\,ds_1)
								+\rho_1'\,dx_1\wedge(ds_0+\rho_0\,ds_1)
				\end{align*}
				where $\veps\in\{\pm1\}$ and where the univariate
				functions $\rho_0,\rho_1,F_0,F_1$ are given as in one of the
				cases below.
				\begin{enumerate}[label=(L\arabic*)]
					\item $\rho_i(x_i)=x_i$, $F_i=c_i\in\RR$
					\item $\rho_i(x_i)=c_ie^{(\beta-1)x_i}$, $F_i=d_ie^{-\frac12(\beta+2)x_i}$ with $c_i,d_i\in\RR$
							($\beta\ne1$)
					\item $\rho_i(x_i)=x_i$, $F_i=c_ie^{-\frac32x_i}$, $c_i\in\RR$
					\item $\rho_i(x_i)=-\tan(x_i)$, $F_i=\frac{c_ie^{-\frac32\beta x_i}}{\sqrt{|\cos(x_i)|}}$,
							  $c_i\in\RR$
				\end{enumerate}
		\item	\textbf{Complex type:} 
				In the new coordinates $(z,\bar z,s_0,s_1)$,
				\begin{align}
					\label{eqn:C.metrics}
					g &= \frac14\,(\bar\rho-\rho)(F^2dz^2-\bar F^2d\barz^2) \\
					\nonumber
						&\quad +\frac4{\rho-\bar\rho}\left[
								\left(\frac{\bar\rho'}{\bar F}\right)^2(ds_0+\rho ds_1)^2
								-\left(\frac{\rho'}{F}\right)^2(ds_0+\bar\rho ds_1)^2
						\right]
					\\
					\nonumber
					\omega &= \rho'\,dz\wedge(ds_0+\bar\rho\,ds_1)
						+\bar\rho'\,d\barz\wedge(ds_0+\rho\,ds_1)
				\end{align} 
				where the holomorphic functions $\rho,F$ are given as in one
				of the cases below (for brevity we denote
				$\rho'=\diff{\rho}{z}$ and $\bar\rho'=\diff{\bar\rho}{\barz}$).
				\begin{enumerate}[label=(C\arabic*)]
					\item $\rho(z)=z$,
					$F=\varsigma=\varsigma_0+i\varsigma_1\in\CC$
					\item $\rho(z)=e^{(\beta-1)z}$,
					$F=\varsigma e^{-\frac12(\beta+2)z}$ with
					$\varsigma=\varsigma_0+i\varsigma_1\in\CC$ ($\beta\ne1$)
					\item $\rho(z)=z$, $F=\varsigma e^{-\frac32z}$,
					$\varsigma=\varsigma_0+i\varsigma_1\in\CC$
					\item $\rho(z)=-\tan(z)$, $F=\frac{\varsigma
					e^{-\frac32\beta z}}{\sqrt{\cos(z)}}$,
							$\varsigma=\varsigma_0+i\varsigma_1\in\CC$
				\end{enumerate}
		\item	\textbf{Degenerate type:}
				In the new coordinates $(x_0,x_1,s_0,s_1)$,
				\begin{align*}
					g &= -\rho\,h +\rho F^2 dx_0^2 +\frac1{\rho}\left(\frac{\rho'}{F}\right)^2\theta^2
					\\
					\omega &= -\rho\,d\tau +\rho' dx_0\wedge\theta
				\end{align*}
				where $h=\sum_{i,j}h_{ij}(s_0,s_1)ds_ids_j$ and 
				$\theta=dx_1-\tau$, and
				where the functions $\rho,F,\tau$ are given as in one of the
				cases below.
				\begin{enumerate}[label=(D\arabic*)]
					\item $\rho(x_0)=\frac1{x_0}$, 
					$F(x_0)=\frac{c_1}{\sqrt{|x_0|}}$
					and $\tau=s_0ds_1$. Moreover, $h=G(s_1)ds_0^2+\frac1{G(s_1)}ds_1^2$.
					\item[(D2a)] $\rho(x_0)=c_1e^{-3x_0}$, $F(x_0)=d_1$,
								$c_1,d_1\in\RR$, $\tau=s_0ds_1$, $h=G(s_1)ds_0^2+\frac1{G(s_1)}ds_1^2$.
					\item[(D2b)] $\rho(x_0)=c_1e^{(\beta-1)x_0}$, 
					$F(x_0)=d_1e^{-\frac12(\beta+2)x_0}$
								($\beta\ne1$, $\beta\ne-2$),
								$c_1,d_1\in\RR$,\\
								$\tau=-\frac1{\beta+2}e^{-(\beta+2)s_0}G(s_1)ds_1$,
								$h=e^{-(\beta+2)s_0}G(s_1)(ds_0^2+ds_1^2)$
					\item[(D3)] $\rho(x_0)=\frac1{x_0}$, 
					$F(x_0)=\frac{c_1e^{-\frac32x_0}}{\sqrt{|x_0|}}$,
								$\tau=-\frac13e^{-3s_0}G(s_1)ds_1$, $h=e^{-3s_0}G(s_1)(ds_0^2+ds_1^2)$
				\end{enumerate}
	\end{enumerate}
\end{fact}
\noindent
To avoid misconceptions, we would like to point out a few particulars of the
list in Fact~\ref{fct:metrics}:
\begin{itemize}[left=1em]
	\item
	Not all the metrics listed in Fact~\ref{fct:metrics} are of non-constant holomorphic sectional curvature. Theorem~\ref{thm:CHSC} below is
	addressing this point.
	\item
	Not every metric of non-constant holomorphic curvature with an essential
	c-projective vector field is isometric to one explicitly stated in the list, but every such metric can be expressed by using a metric $g$ from the list and a
	suitable companion metric $\hat g$ (detailed in
	Appendix~\ref{app:companion.metrics}) using the formula
	\begin{equation}\label{eqn:metric.construction}
		g[t_1,t_2] = \frac{\left(
						t_1|\det(g)|^{\tfrac16}g^{-1}
						+t_2|\det(\hat g)|^{\tfrac16}\hat g^{-1}\right)^{-1}
					}{
						\sqrt{\det\left(
							t_1|\det(g)|^{\tfrac16}g^{-1}
							+t_2|\det(\hat g)|^{\tfrac16}\hat g^{-1}
						\right)}
					} \,,\quad t_i\in\mathbb{R}\,.
	\end{equation}
	The problem of describing these individual metrics, up to isometries, is
	not solved in the current paper, but is going to be addressed in a
	separate paper.
	For our purposes here it is sufficient to state the following: The three types in Fact~\ref{fct:metrics}
	are disjoint, i.e., a metric satisfying the hypothesis of
	Fact~\ref{fct:metrics} is either of Liouville, of complex or of degenerate
	type.
	\item
	Metrics obtained using Formula \eqref{eqn:metric.construction} have the same c-projective algebra as
	the metric~$g$ itself. In particular, for metrics of constant holomorphic
	sectional curvature, the algebra of c-projective vector fields is
	well understood and always isomorphic to that of the Fubini-Study metric (see Example~\ref{ex:fubini-study} below).
	For the metrics of non-constant HSC which admit a non-affine c-projective 
	vector field, we determine the full c-projective algebras in this paper.
	\item
	Fact~\ref{fct:metrics} provides a characterisation modulo c-projective 
	equivalence and indeed there are metrics in the list of 
	Fact~\ref{fct:metrics} that admit only c-projective vector fields that 
	are affine (e.g., in the case L1). However, generically, the metrics that are c-projectively equivalent 
	to one listed in Fact~\ref{fct:metrics} admit essential c-projective 
	vector fields. 
	Note 
	that c-projective algebras are invariant under c-projective 
	transformations.
\end{itemize}
\noindent Before turning to our main results, let us consider the case of
constant HSC more closely. Such metrics are c-projectively
equivalent to the Fubini-Study metric \cite{FKMR2010,kruglikov_2016}, and therefore their c-projective algebras are isomorphic. 
\begin{example}[Fubini-Study metric on $\CC\mathbb{P}^2$]
\label{ex:fubini-study}
Let $z_1=x+iy$ and $z_2=s+it$ be complex coordinates on $\CC\mathbb{P}^2$.
The Fubini-Study metric on $\CC\mathbb{P}^2$ is locally described by
\begin{equation}\label{eqn:Fubini.metric.local} g=\frac{(1+\sum_iz_i\bar{z}_i)\sum_jdz_jd\bar{z}_j-\sum_{i,j}\bar{z}_jz_idz_jd\bar{z}_i}{(1+\sum_iz_i\bar{z}_i)^2}\,.
\end{equation}
The complex structure $J$ in these coordinates takes the standard form
\begin{equation*} 
	J=
	\left(
	\begin{array}{cccc}
		0 & 1 & 0 & 0
		\\
		-1 & 0 & 0 & 0
		\\
		0 & 0 & 0 & 1
		\\
		0 & 0 & -1 & 0
	\end{array}
	\right)
\end{equation*}
The c-projective algebra of~\eqref{eqn:Fubini.metric.local} is isomorphic
to~$\mathfrak{sl}(3,\CC)$ \cite{MS1983,FKMR2010}.
An explicit realisation of this algebra in terms of 16 generating vector
fields can be found in Appendix~\ref{app:fubini-study}.
\end{example}

\noindent
With this example in mind, the purpose of the present paper is twofold: We first identify the metrics in Fact~\ref{fct:metrics} that have
constant HSC, and then we determine the full
c-projective algebras of those metrics in Fact~\ref{fct:metrics} that have
non-constant HSC.
In particular, we obtain the following four main theorems.
\begin{theorem}\label{thm:CHSC}
	Among the metrics in Fact~\ref{fct:metrics} the following have constant
	HSC:
	\begin{enumerate}
		\item[(L1)] Metrics of L1 type have constant HSC if $\varepsilon=-1$
		and $c_1=\pm c_0$.
		\item[(L2)] Metrics of L2 type have constant HSC if
		\begin{itemize}
			\item[(i)]
			$\beta=-\tfrac12$
			and $c_1=\varepsilon c_0\frac{d_1^2}{d_0^2}$,
			\item[(ii)]
			$\beta=-2$,
			$\varepsilon=-1$ and $d_1=\pm d_0$.
		\end{itemize}
		\item[(C1)] Metrics of C1 type have constant HSC if either
		$\Re(\varsigma)=0$
		or $\Im(\varsigma)=0$.
		\item[(C2)] Metrics of C2 type have constant HSC if
		\begin{itemize}
			\item[(i)] $\beta=-\tfrac12$
			and either $\Re(\varsigma)=0$ or $\Im(\varsigma)=0$,
			\item[(ii)]
			$\beta=-2$
			and either $\Re(\varsigma)=0$ or $\Im(\varsigma)=0$.
		\end{itemize} 
		\item[(D1)] Metrics of D1 type have constant HSC if the metric $h$
		has
		vanishing Gau{\ss} curvature.
		\item[(D2a)] Metrics of D2a type have constant HSC if the metric $h$
		has
		Gau{\ss} curvature exactly $-\frac{9}{d_1^2}$.
	\end{enumerate}
	All other metrics in Fact~\ref{fct:metrics} are of non-constant HSC.
\end{theorem}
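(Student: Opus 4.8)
The plan is to reduce the statement to an algebraic condition on the Riemann tensor that can be checked family by family. Recall that for a Kähler metric the holomorphic sectional curvature determines the full curvature tensor, and that $g$ has constant HSC equal to $c$ if and only if, in a local holomorphic frame, the curvature components satisfy
\[
  R_{i\bar\jmath k\bar l}=\lambda\,(g_{i\bar\jmath}g_{k\bar l}+g_{i\bar l}g_{k\bar\jmath})\,,
\]
where $\lambda$ is proportional to $c$. Since $M$ has complex dimension $2$, a Schur-type argument shows that if this proportionality holds pointwise with a function $\lambda$, then $\lambda$ is automatically constant; hence it suffices to verify, at a generic point, that the map $\Pi\mapsto K(\Pi)$ is independent of the $J$-invariant plane $\Pi$. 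Equivalently, writing $v=\sum_a(x_ae_a+y_aJe_a)$ in a unitary frame, the quartic form $R(v,Jv,v,Jv)$ must be a scalar multiple of $\|v\|^4=\big(\sum_a(x_a^2+y_a^2)\big)^2$.

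First I would compute, for each family in Fact~\ref{fct:metrics}, the Levi-Civita connection and the curvature tensor in the given coordinates. The metrics are block-structured and built from univariate functions $\rho_i,F_i$ (Liouville type), holomorphic $\rho,F$ (complex type), or a univariate pair $\rho,F$ together with a two-dimensional metric $h$ (degenerate type), so the computation is long but organised: I would pass to a frame adapted to $J$ that diagonalises the two evident $J$-invariant distributions, reducing the curvature to a small number of independent components $R_{i\bar\jmath k\bar l}$. For the complex type the holomorphicity of $\rho,F$ collapses many terms, while for the degenerate type the curvature splits into a contribution from $\rho,F$ and a contribution proportional to the Gau\ss{} curvature of $h$, which is exactly why conditions D1 and D2a are phrased in terms of the Gau\ss{} curvature of $h$.

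With the components in hand, I would impose the proportionality $R_{i\bar\jmath k\bar l}=\lambda(g_{i\bar\jmath}g_{k\bar l}+g_{i\bar l}g_{k\bar\jmath})$ and solve the resulting equations for the parameters $\veps,\beta,c_i,d_i,\varsigma$ (and, in the degenerate case, for the Gau\ss{} curvature of $h$). Each case produces a small system: for L1 the off-diagonal component forces $\veps=-1$ together with a relation between $c_0$ and $c_1$; for L2 and C2 the exponents combine so that the condition becomes a polynomial in $\beta$ with the indicated roots $\beta=-\tfrac12,-2$; for C1 the vanishing of the non-model part reduces to $\Re(\varsigma)\,\Im(\varsigma)=0$; and so on, matching the six listed sub-cases. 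For the remaining families (L3, L4, C3, C4, D2b, D3) I would exhibit explicitly a non-model component of $R_{i\bar\jmath k\bar l}$ that cannot be made to vanish for any admissible parameter values, certifying non-constant HSC.

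The main obstacle is the sheer bulk of the curvature computation and the need to keep it organised: choosing a coframe in which the components are manageable, and tracking how the single-variable functions and their derivatives enter, is where all the work lies; once the relevant components are isolated, the constancy conditions reduce to elementary algebra. A secondary subtlety is the degenerate type, where one must correctly separate the internal curvature of $h$ from the fibre directions so that the Gau\ss{}-curvature thresholds ($0$ for D1, $\tfrac{9}{d_0^2}$ for D2a) emerge cleanly, and then confirm that for D2b and D3 no choice of $h$ can produce the model form.
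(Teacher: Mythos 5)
Your proposal is correct in principle, but it takes a genuinely different route from the paper. You reduce constant HSC to the pointwise complex-space-form identity $R_{i\bar\jmath k\bar l}=\lambda\,(g_{i\bar\jmath}g_{k\bar l}+g_{i\bar l}g_{k\bar\jmath})$ plus a K\"ahler Schur argument, and then propose to compute the full curvature tensor of every family and solve the proportionality conditions for the parameters (and, in the degenerate cases, for the Gau{\ss} curvature of $h$). The paper instead exploits the c-projective structure it already has in hand: by Lemma~\ref{la:mobility}, constant HSC for a K\"ahler surface is equivalent to degree of mobility $(n+1)^2$, which in particular exceeds two, and whenever the degree of mobility exceeds two the explicitly known tensor $L$ of~\eqref{eqn:L} (built from the companion metric $\hat g$ of Appendix~\ref{app:companion.metrics}) must satisfy the linear first-order Sinjukov system~\eqref{eqn:sinjukov}. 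Feeding the explicit $L$ into that system yields cheap algebraic necessary conditions on $\beta,\varsigma,c_i,d_i$ (the tables in Section~\ref{sec:CHSC}), and only for those few surviving parameter configurations does the paper perform a direct HSC check --- the kind of computation you propose to do everywhere. The trade-off: your approach is self-contained, needs no mobility theory or companion metrics, and delivers the ``if and only if'' in one pass; the paper's filter keeps the heavy curvature computations confined to a handful of special cases, at the price of invoking Lemma~\ref{la:mobility} and the $V_4(B)$ machinery (whose condition is only necessary, hence the paper's second step). Two points to watch if you carry your plan out: since the metrics have arbitrary signature, the polarization and Schur arguments must be restricted to non-null vectors and nondegenerate $J$-planes (this works on a dense open set, but should be stated); and in the degenerate cases the unknown is the arbitrary function $G(s_1)$ rather than finitely many constants, so your ``elementary algebra'' at the end is really an ODE-type condition on $G$, which you must show is exactly the stated Gau{\ss}-curvature condition.
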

\noindent
Having answered the question which metrics of Fact~\ref{fct:metrics} have 
constant HSC, we now turn to determining the c-projective vector fields of 
those metrics with non-constant HSC.
We begin with Liouville type metrics.
\begin{theorem}\label{thm:Liouville.main}
	Let $g$ be a K\"ahler surface covered by Fact~\ref{fct:metrics}. Assume
	it is of non-constant HSC and that it is of Liouville type.
	Then its c-projective algebra is isomorphic to one of the following.
	\begin{enumerate}[label=(L\arabic*)]
		\item 
		The c-projective algebra is 4-dimensional,
		\begin{align*}
			\projalg(g) = \big\langle\quad
				\del_{x_0}+\del_{x_1}-s_1\del_{s_0}\,,\quad
				x_0\del_{x_0}+x_1\del_{x_1}+2s_0\del_{s_0}+s_1\del_{s_1}\,,\quad
				\del_{s_0}\,,\quad\del_{s_1}\quad\big\rangle\,.
		\end{align*}
		\item 
		The c-projective algebra is either 3-dimensional or 4-dimensional.
		\begin{enumerate}[label=(\roman*)]
			\item If $\beta\ne0$,
			then the c-projective algebra is 3-dimensional,
			\begin{align*}
				\projalg(g) = \big\langle\quad
					\del_{x_0}+\del_{x_1}-(\beta+2)s_0\del_{s_0}
							-(2\beta+1)s_1\del_{s_1}\,,\quad
					\del_{s_0}\,,\quad\del_{s_1}\quad\big\rangle\,.
			\end{align*}
			\item If $\beta=0$,
			then the c-projective algebra is 4-dimensional,
			\begin{align*}
				\projalg(g) = \big\langle\quad
						\del_{x_0}+\del_{x_1}
							-2s_0\del_{s_0}
							-s_1\del_{s_1}\,,\quad
						\frac1{c_0}e^{x_0}\del_{x_0}
							+\frac1{c_1}e^{x_1}\del_{x_1}
							+s_1\del_{s_0}\,,\quad
						\del_{s_0}\,,\quad\del_{s_1}\quad\big\rangle\,.
			\end{align*}
		\end{enumerate}
		\item 
		The c-projective algebra is 3-dimensional,
		\begin{align*}
			\projalg(g) = \big\langle\quad
					\partial_{x_0}+\partial_{x_1}
							-(3s_0+s_1)\partial_{s_0}
							-3s_1\partial_{s_1}\,,\quad
					\del_{s_0}\,,\quad\del_{s_1}\quad\big\rangle\,.
		\end{align*}
		\item 
		The c-projective algebra is 3-dimensional,
		\begin{align*}
			\projalg(g) = \big\langle\quad
					\partial_{x_0}+\partial_{x_1}
							-(3\beta s_0-s_1)\partial_{s_0}
							-(s_0+3\beta s_1)\partial_{s_1}\,,\quad
					\del_{s_0}\,,\quad\del_{s_1}\quad\big\rangle\,.
		\end{align*}
	\end{enumerate}
\end{theorem}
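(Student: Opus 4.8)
The plan is to compute the c-projective algebra directly by solving the defining PDE for c-projective vector fields on each Liouville-type metric from Fact~\ref{fct:metrics}. Recall that a vector field $v$ is c-projective precisely when $\lie_vJ=0$ together with the condition that the symmetrised covariant derivative of $v$ differs from a multiple of the metric by a tensor controlled by the c-projective connection; concretely, one works with the linear overdetermined system equivalent to $\lie_v\nabla\equiv(\text{trace terms})$ in c-projective normal form. Since the four sub-cases (L1)--(L4) have explicit metrics with univariate functions $\rho_i,F_i$, the strategy is: first impose $\lie_vJ=0$, which forces $v$ to be a (real) holomorphic vector field and drastically cuts down the coefficient functions; then substitute into the c-projective Killing-type equations and reduce to a system of ODEs in the coefficients of $v$.

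First I would write a general vector field $v=\sum a^i\del_{x_i}+\sum b^j\del_{s_j}$ with coefficients depending on all of $(x_0,x_1,s_0,s_1)$, and impose $\lie_vJ=0$. Because the metric is in the ``separated'' Liouville form, $J$ acts by pairing each $dx_i$ with the corresponding $ds_0+\rho_{1-i}\,ds_1$ block, so the holomorphicity condition will split the system: I expect it to force the $a^i$ to depend only on $x_i$ (each on its own variable) and to constrain the $s$-dependence of the $b^j$ to be at most affine in $s_0,s_1$. Next I would feed this reduced ansatz into the c-projective equations. The key simplification is that for these metrics the only essential direction is built from the scaling/translation structure visible in the listed generators: translations $\del_{s_0},\del_{s_1}$ are manifest Killing fields (hence affine, hence c-projective), and the remaining generators are the combinations $\del_{x_0}+\del_{x_1}$ corrected by linear vector fields in $s_0,s_1$ whose coefficients are dictated by the exponents $\beta$ appearing in $\rho_i,F_i$.

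The main obstacle will be the case distinctions in (L2), where the dimension jumps from $3$ to $4$ depending on whether $\beta=0$ or the algebraic condition $c_1^2d_0^2+\veps c_0^2 d_1^2=0$ (with $\veps=-1$) holds. This is exactly where an extra essential generator appears, and pinning it down requires tracking when the integrability (compatibility) conditions of the reduced ODE system degenerate. I would handle this by writing out the compatibility conditions as polynomial relations in $\beta$ and in the constants $c_i,d_i$, then showing that the extra solution $\tfrac1{c_0}e^{x_0}\del_{x_0}+\tfrac1{c_1}e^{x_1}\del_{x_1}+s_1\del_{s_0}$ is c-projective exactly under those degeneracy conditions and fails otherwise. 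For (L1), (L3), (L4) the analogous computation is more rigid: the exponent structure ($\rho_i=x_i$ linear, or $\rho_i=-\tan x_i$) leaves no room for a second essential field, so one only needs to verify that the stated generators satisfy the c-projective equations and that no further solutions exist. Throughout, I would use that the algebra is c-projectively invariant (noted after Fact~\ref{fct:metrics}) so that it suffices to work with the representative metric $g$ rather than the whole class $[g]$, and I would verify closure under the Lie bracket at the end to confirm the abstract isomorphism type of each algebra.

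The remaining routine work is checking that the listed fields indeed satisfy $\lie_vJ=0$ and the c-projective system, and that the solution space has exactly the claimed dimension; this is a finite, if lengthy, verification once the reduced ODE system is in hand, and I would organise it case by case following the labels (L1)--(L4).
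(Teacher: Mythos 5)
Your route is genuinely different from the paper's: you propose to integrate the defining c-projective PDE system ($\lie_vJ=0$ plus $\lie_v\nabla\equiv$ trace terms) directly for each explicit metric. The paper instead exploits the fact that non-constant HSC forces the degree of mobility to be exactly $2$ (Lemma~\ref{la:mobility}), so every c-projective field $v$ acts on the mobility tensor $L=L[g,\hat g]$ of Equation~\eqref{eqn:L} via $\lie_vL=-a_{01}L^2+(a_{11}-a_{00})L+a_{10}\Id$ and $\lie_vg=a_{00}(n+1)\Id+a_{01}(L+\tr(L)\Id)$, with only four unknown constants $a_{ij}$. Since for Liouville metrics $L=\mathrm{diag}$-type with eigenvalues $\rho_0(x_0)$, $\rho_1(x_1)$, Lemma~\ref{la:vf} instantly yields $v^0=v^0(x_0)$, $v^1=v^1(x_1)$, and the remaining blocks reduce to sums $\sum c_{\mu\nu}F_\mu(x_0)\hat F_\nu(x_1)=0$ over linearly independent functions, i.e.\ to pure linear algebra in the $a_{ij}$ and the metric parameters. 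Your approach foregoes all of this structure, which is legitimate in principle but computationally far heavier.

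However, there is a genuine gap in your reduction step. You claim that $\lie_vJ=0$ alone will ``force the $a^i$ to depend only on $x_i$'' and constrain the $b^j$ to be affine in $s_0,s_1$. This is false: the solutions of $\lie_vJ=0$ are exactly the real-holomorphic vector fields, which form an infinite-dimensional space locally (in adapted complex coordinates, any pair of holomorphic functions of two complex variables gives a solution), so no finite-dimensional or separated normal form can follow from holomorphicity alone. The separation of variables is precisely the hard content of the paper's Lemma~\ref{la:Liouville}, and there it is extracted from the coupling of $v$ to the metric structure --- concretely from the eigenvalue equation $v(\rho_j)=-a_{01}\rho_j^2+(a_{11}-a_{00})\rho_j+a_{10}$ and the block equations $(*b)$--$(*D)$, including a careful treatment of the exceptional branch $F_1^2\rho_0'^2+\veps F_0^2\rho_1'^2=0$ (which is exactly the source of the dimension jump in case (L2) that you flagged). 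In your scheme the analogous information must come from the second-order connection equations $\lie_v\nabla\equiv$ trace terms, not from $\lie_vJ=0$; until that reduction is actually carried out, your ``reduced ODE system'' does not exist and the completeness claim (``no further solutions exist'') is unsupported. Verifying that the listed generators are c-projective is the easy half; the theorem is an upper bound on the solution space, and that bound is precisely what your sketch postpones to an expectation that, as stated, fails.
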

\noindent
Next, we consider complex type metrics.
\begin{theorem}\label{thm:complex.main}
	Let $g$ be a K\"ahler surface covered by Fact~\ref{fct:metrics}. Assume
	it is of non-constant HSC and that it is of complex type.
	Then its c-projective algebra is isomorphic to one of the following.
	\begin{enumerate}[label=(C\arabic*)]
		\item 
		The c-projective algebra is 4-dimensional:
		\begin{align*}
			\projalg(g) = \big\langle\quad
					\del_z+\del_{\barz}-s_1\del_{s_0}\,,\quad
					z\del_z+\barz\del_{\barz}
							+2s_0\del_{s_0}+s_1\del_{s_1}\,,\quad
					\del_{s_0}\,,\quad\del_{s_1}\quad\big\rangle\,.
		\end{align*}
		\item 
		The c-projective algebra is either 3-dimensional or 4-dimensional,
		\begin{enumerate}[label=(\roman*)]
			\item If $\beta\ne0$,
			then the c-projective algebra is 3-dimensional,
			\begin{align*}
				\projalg(g) = \big\langle\quad
						\del_{z}+\del_{\barz}
							-(\beta+2)s_0\del_{s_0}
							-(2\beta+1)s_1\del_{s_1}\,,\quad
						\del_{s_0}\,,\quad\del_{s_1}\quad\big\rangle\,.
			\end{align*}
			\item If $\beta=0$,
			then the c-projective algebra is 4-dimensional,
			\begin{align*}
				\projalg(g) = \big\langle\quad
					\del_z+\del_{\barz}-2s_0\del_{s_0}-s_1\del_{s_1}\,,\quad
					e^{z}\del_{z}+e^{\barz}\del_{\barz}+s_1\del_{s_0}\,,\quad
					\del_{s_0}\,,\quad\del_{s_1}\quad\big\rangle\,.
			\end{align*}
		\end{enumerate}
		\item 
		The c-projective algebra is 3-dimensional,
		\begin{align*}
			\projalg(g) = \big\langle\quad
					\partial_{z}+\partial_{\barz}-(3s_0+s_1)\partial_{s_0}
							-3s_1\partial_{s_1}\,,\quad
					\del_{s_0}\,,\quad\del_{s_1}\quad\big\rangle\,.
		\end{align*}
		\item 
		The c-projective algebra is 3-dimensional,
		\begin{align*}
			\projalg(g) = \big\langle\quad
			\partial_z+\partial_{\barz}
			-(3\beta s_0-s_1)\partial_{s_0}
			-(s_0+3\beta s_1)\partial_{s_1}\,,\quad
			\del_{s_0}\,,\quad\del_{s_1}\quad\big\rangle\,.
		\end{align*}
	\end{enumerate}
\end{theorem}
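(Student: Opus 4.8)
The plan is to compute the c-projective vector fields directly from the defining condition $\mathcal{L}_v J = 0$ together with the c-projectivity equations, working separately in each of the four subcases (C1)--(C4) of the complex type. Since we already know from Fact~\ref{fct:metrics} the explicit form of $g$ and $\omega$ in the coordinates $(z,\barz,s_0,s_1)$, the first step is to write down the system of PDEs that a vector field $v$ must satisfy to be c-projective. Recall that a vector field $v$ is c-projective precisely when there exists a $1$-form $\phi$ such that the symmetrised covariant derivative of the connection difference takes the standard c-projective form; equivalently $v$ satisfies $\mathcal{L}_v\Gamma^k_{ij} = \delta^k_i\phi_j + \delta^k_j\phi_i + J^k_i(\phi\circ J)_j + J^k_j(\phi\circ J)_i$ for some $\phi$, while simultaneously preserving $J$. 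I would first impose $\mathcal{L}_v J = 0$, which forces $v$ to be (real) holomorphic and thereby drastically restricts its coordinate dependence.

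Next I would exploit the very special structure of the complex-type metrics: the functions $\rho, F$ are holomorphic and univariate in $z$, and the $s_0,s_1$ dependence of $g$ is only through the polynomial coefficients $\bar\rho, \rho$ appearing in the $ds_i$ terms. This separation of variables means that after imposing holomorphicity the c-projectivity PDE system decouples into an ODE part in the $z$-variable (governed by $\rho, F$) and a linear part in the $s$-variables. Concretely, I expect the $s$-directions $\del_{s_0}, \del_{s_1}$ to always be c-projective (they are Killing/affine by the translational symmetry of the coefficients), giving the abelian part of the algebra present in every case. The remaining generators come from combining a $z$-scaling or $z$-translation with a compensating linear flow in $(s_0,s_1)$; the precise coefficients, e.g.\ the $-(\beta+2)$ and $-(2\beta+1)$ in case (C2i), are fixed by matching the weights under which $\rho=e^{(\beta-1)z}$ and $F=\varsigma e^{-\frac12(\beta+2)z}$ transform. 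I would determine these by substituting an ansatz $v = a(z)\del_z + \overline{a(z)}\del_{\barz} + (b_{ij}s_j+b_i)\del_{s_i}$ and solving the resulting algebraic constraints on the constants.

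The genuinely delicate step is case (C2), where the dimension jumps from $3$ to $4$ depending on whether $\beta=0$ or $\varsigma^2=\bar\varsigma^2$ (equivalently $\varsigma_0=0$ or $\varsigma_1=0$). The extra generator $e^z\del_z+e^{\barz}\del_{\barz}+s_1\del_{s_0}$ exists only when an integrability or resonance condition among the weights is met, and I would need to track carefully when the linear system for the $z$-part admits a second independent solution. This is where the reality condition matters: the constraint $\varsigma^2=\bar\varsigma^2$ is exactly the condition under which the companion term $\bar F^2$ becomes proportional to $F^2$ (up to sign), collapsing an obstruction and permitting the additional flow. I anticipate this degeneracy analysis to be the main obstacle, since it requires distinguishing generic from non-generic parameter values and verifying that no further hidden symmetries appear in the non-constant HSC regime guaranteed by Theorem~\ref{thm:CHSC}.

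Finally, once candidate generators are found in each subcase, I would verify closure under the Lie bracket to confirm the stated algebra dimensions, and cross-check against the Liouville-type results of Theorem~\ref{thm:Liouville.main}: the complex-type cases are the holomorphic (Wick-rotated) analogues, so the generators in (C1)--(C4) should mirror those of (L1)--(L4) under the formal replacement $x_0,x_1 \leftrightarrow z,\barz$ and $c_i \leftrightarrow \varsigma$. This parallel structure provides both a guide for the computation and an independent consistency check on the final answer.
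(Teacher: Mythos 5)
Your plan only establishes the easy half of the theorem. The statement is a classification, so the substantive direction is the \emph{upper} bound: proving that no c-projective vector fields exist beyond the ones listed. Your method substitutes the ansatz $v = a(z)\del_z + \overline{a(z)}\del_{\barz} + (b_{ij}s_j+b_i)\del_{s_i}$, solves for the constants, and then checks bracket closure; but bracket closure of a candidate set can never exclude further symmetries, and the ansatz itself is exactly what must be \emph{derived}, not assumed. Note in particular that $\mathcal{L}_vJ=0$ does not justify it: in the complex-type normal form the variable $z$ is an eigencoordinate of the tensor $L$ of \eqref{eqn:L} obtained by formal complexification of the Liouville coordinates ($g$ contains $dz^2$ and $d\barz^2$ terms), so $z$ is \emph{not} a $J$-holomorphic coordinate; the $J$-holomorphic coordinates mix $z$, $s_0$, $s_1$, and a real-holomorphic field may a priori have a $z$-component depending on the $s$-variables and $s$-components that are neither affine in $(s_0,s_1)$ nor independent of $z,\barz$. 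In the paper this restriction is a lemma (the analogue of Lemma~\ref{la:Liouville}), derived from the rigidity equations \eqref{eqn:LvL}--\eqref{eqn:Lvg}, which are available precisely because non-constant HSC together with an essential field forces degree of mobility two (Lemma~\ref{la:mobility}); these equations replace the unknown one-form $\phi$ in your defining relation $\mathcal{L}_v\Gamma^k_{ij}=\delta^k_i\phi_j+\dots$ by four real constants $a_{ij}$, and it is this rigidity that makes an exhaustion argument feasible. The gap is most dangerous exactly in your ``delicate'' case (C2): even the paper's lemma guarantees affine-linear $s$-components only under the non-degeneracy condition (the analogue of \eqref{eqn:exceptional.criterion.L}), and the 4-dimensional case (C2ii), $\varsigma^2=\bar\varsigma^2$, is precisely the exceptional branch where that condition fails and the full PDE system must be re-solved from scratch. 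So your ansatz is least justified at the very point where the dimension jump you want to detect occurs.

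On the route itself: the formal correspondence with the Liouville case, which you relegate to a final consistency check, is in fact the paper's \emph{entire} proof of Theorem~\ref{thm:complex.main}. The complex-type metrics arise from the Liouville-type ones under $\veps=-1$, $x_0\to z$, $x_1\to\barz$, $c_0=c_1=1$, $d_0\to\varsigma$, $d_1\to\bar\varsigma$, so the theorem follows from Theorem~\ref{thm:Liouville.main} by translating the exceptional condition $c_1^2d_0^2+\veps c_0^2d_1^2=0$ into $\varsigma^2=\bar\varsigma^2$, i.e.\ $\varsigma_0\varsigma_1=0$. If you first prove the Liouville case completely (which is where the $L$-tensor machinery and the exceptional-branch analysis are indispensable), then no independent computation for (C1)--(C4) is needed at all; conversely, without that machinery your per-case computation, as proposed, cannot certify that the algebras are not larger than stated.
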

\noindent
Lastly, there are the metrics of degenerate type.
\begin{theorem}\label{thm:degenerate.main}
	Let $g$ be a K\"ahler surface covered by Fact~\ref{fct:metrics}. Assume
	it is of non-constant HSC and that it is of degenerate type.
	Then its c-projective algebra is isomorphic to one of the following.
	\begin{enumerate}[label=(D\arabic*)]
		\item 
		The c-projective algebra is of dimension 3, 4 or 5.
		\begin{enumerate}[label=(\roman*)]
			\item If $h$ has constant (non-zero) Gau{\ss} curvature,
			$$ h = G(s_1)ds_0^2+\frac{ds_1^2}{G(s_1)}\,,\qquad
			G(s_1) = \kappa s_1^2+\mu_1s_1+\mu_2\,,\quad
			\kappa\ne0\,, $$
			then the c-projective algebra is 5-dimensional (see
			Section~\ref{sec:D1.results}).
			\item If $h$ admits a 2-dimensional homothetic symmetry algebra,
			 then
			 $$ h = G(s_1)ds_0^2+\frac{ds_1^2}{G(s_1)}\,,\qquad
			 	G(s_1) = \kappa(\mu_1s_1+\mu_2)^{\frac{2(\mu_1+1)}{\mu_1}} $$
			 with $\projalg(h)=\langle\del_{s_0}\,,~~
			 		(\mu_1+2)s_0\del_{s_0}-(\mu_1s_1+\mu_2)\del_{s_1}\rangle$.
			 Then the c-projective algebra is 4-dimensional,
			 \begin{align*}
			 	\projalg(g) &= \big\langle\quad
			 			\partial_{x_0}\,,\quad
			 			\partial_{x_1}\,,\quad
			 			s_1\partial_{x_1}+\partial_{s_0}\,,
			 			\\
			 			&\qquad\quad
				 		2x_0\partial_{x_0}+2x_1\partial_{x_1}
				 		+(\mu_1+2)s_0\partial_{s_0}
			 			-(\mu_1s_1+\mu_2)\partial_{s_1}\quad
				 	\big\rangle\,.
			 \end{align*}
			\item If neither of the previous two cases is assumed, then the
			c-projective algebra is 3-dimensional,
			\begin{align*}
				\projalg(g) = \big\langle\quad
					\del_{x_0}\,,\quad\del_{x_1}\,,\quad
					s_1\del_{x_1}+\del_{s_0}\quad\big\rangle\,.
			\end{align*}
		\end{enumerate}
		\item[(D2a)]
		The c-projective algebra is of dimension 3 or 5.
		\begin{enumerate}[label=(\roman*)]
			\item If the metric $h$ has constant Gau{\ss} curvature,
			then the c-projective algebra of $g$ is 5-dimensional (see Section~\ref{sec:D2a.results}).
			\item If $h$ admits a 2-dimensional or 1-dimensional homothetic
			symmetry algebra, then the c-projective algebra is 3-dimensional,
			\begin{align*}
				\projalg(g) = \big\langle\quad
				\del_{x_0}\,,\quad\del_{x_1}\,,\quad
				s_1\del_{x_1}+\del_{s_0}\quad\big\rangle\,.
			\end{align*}
		\end{enumerate}
		\item[(D2b)]
		The c-projective algebra is of dimension 2, 3 or 5.
		\begin{enumerate}[label=(\roman*)]
			\item If $h$ has constant Gau{\ss} curvature, then it is already
			flat. The c-projective algebra of $g$ thus is 5-dimensional (see
			Section~\ref{sec:D2b.results}).
			\item If $h$ admits a 2-dimensional homothetic
			symmetry algebra, then the c-projective algebra is 3-dimensional (see
			Section~\ref{sec:D2b.results}).
			\item If $h$ has 1-dimensional homothetic algebra, then the
			c-projective algebra of $g$ is 2-dimensional,
			$$ \projalg(g) = \big\langle\quad
					\del_{x_0}+(\beta+2)x_1\del_{x_1}+\del_{s_0}\,,\quad
					\del_{x_1}\quad\big\rangle\,. $$
		\end{enumerate}
		\item[(D3)]
		The c-projective algebra is of dimension 2, 3 or 5.
		\begin{enumerate}[label=(\roman*)]
			\item If $h$ has constant Gau{\ss} curvature, then $h$ is already
			flat. The c-projective algebra of $g$ is thus
			5-dimensional (see Section~\ref{sec:D3.results}).
			\item If $h$ admits a 2-dimensional homothetic
			symmetry algebra, then the c-projective algebra is 3-dimensional (see
			Section~\ref{sec:D3.results}).
			\item If $h$ has a 1-dimensional homothetic algebra, then the
			c-projective algebra of $g$ is 2-dimensional,
			$$ \projalg(g) = \big\langle\quad
				\del_{x_0}+3x_1\del_{x_1}+\del_{s_0}\,,\quad
				\del_{x_1}\quad\big\rangle\,. $$
		\end{enumerate}
	\end{enumerate}		
\end{theorem}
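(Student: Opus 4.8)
The plan is to integrate, for each degenerate metric of Fact~\ref{fct:metrics}, the linear system that defines c-projective vector fields, and to read off the dimension and generators of its solution space. A vector field $v$ is c-projective precisely when it preserves the complex structure, $\lie_v J=0$, and its flow deforms the Levi-Civita connection within the c-projective class, i.e.~there is a one-form $\phi$ with
\begin{equation*}
	(\lie_v\nabla)(X,Y)=\phi(X)\,Y+\phi(Y)\,X-\phi(JX)\,JY-\phi(JY)\,JX
\end{equation*}
for all $X,Y$. This is an overdetermined linear PDE system in the components of $v$ together with the auxiliary one-form $\phi$; the entire task is to solve it explicitly in the four cases (D1), (D2a), (D2b), (D3).

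First I would use $\lie_v J=0$ to restrict the admissible coordinate dependence of the components of $v$. Combined with the block structure of the degenerate metrics---where $g=-\rho\,h+\rho F^2\,dx^2+\tfrac1\rho(\rho'/F)^2\theta^2$ is built from the two-dimensional base metric $h=h_{ij}(s_0,s_1)\,ds_i\,ds_j$ together with the fibre data $\rho,F,\tau$ depending only on $x_0$---the system separates. The equations governing the $(x_0,x_1)$-components reduce to ODEs in $x_0$ that are solved by the explicit $\rho,F$ of each case and account for the generators $\del_{x_0},\del_{x_1}$ and the scaling field; the equations governing the $(s_0,s_1)$-components reduce to the (homothetic) Killing equation of $h$.

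The decisive structural feature is that $\rho=\rho(x_0)$ multiplies the base metric $h$. Hence a homothety of $h$ (scaling $h\mapsto\lambda h$) need not be an isometry in order to lift to a c-projective field of $g$: the rescaling of $h$ can be compensated by a flow in the $x_0$- and $x_1$-directions that rescales $\rho$ accordingly. This is exactly why, in (D1)(ii), the lift of the generator of $\projalg(h)$ acquires the extra terms $2x_0\del_{x_0}+2x_1\del_{x_1}$ on top of the base homothety $(\mu_1+2)s_0\del_{s_0}-(\mu_1s_1+\mu_2)\del_{s_1}$. The dimension of $\projalg(g)$ is therefore governed by the homothetic symmetry algebra of $h$, producing the stated trichotomy: constant non-zero Gau{\ss} curvature (three-dimensional isometry algebra) gives the largest c-projective algebra, a two-dimensional homothetic algebra gives the intermediate case, and a one-dimensional homothetic algebra gives the smallest. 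In each subcase I would impose the relevant curvature normalisation coming from the explicit $\rho,F,\tau$ (for instance Gau{\ss} curvature $9/d_0^2$ in (D2a), or the flatness forced in (D2b) and (D3)), solve for the lifted components, and verify that no further prolongation condition is violated. The non-constant HSC hypothesis, via Theorem~\ref{thm:CHSC}, excludes the constant-HSC situations in which the algebra would jump to the Fubini--Study dimension.

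The main obstacle I anticipate is twofold. First, the coupling between base and fibre through $\tau$ and $\theta=dx_1-\tau$ obstructs a clean separation: the lift of an $h$-symmetry generically mixes into the $x_1$- and $s_0$-components, so the cross-terms must be tracked carefully to obtain the correct generators---this is the origin of the field $s_1\del_{x_1}+\del_{s_0}$ present in every (D1) subcase. Second, and more delicate, is pinning down the exact dimension: one must check that each candidate lifted field solves the fully prolonged system rather than merely its leading symbol, and that no unexpected extra solution survives. This is where the classification of homothetic algebras of the two-dimensional metrics $h$ does the essential work, and where the bulk of the case-by-case computation resides.
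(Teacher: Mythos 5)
Your overall skeleton---reduce the c-projective condition on $g$ to the homothetic/Killing equation of the two-dimensional metric $h$, then run a case analysis on $\dim\homalg(h)$---is the same as the paper's. Your route into it is genuinely different, though: you work directly with the defining equation $(\lie_v\nabla)(X,Y)=\phi(X)Y+\phi(Y)X-\phi(JX)JY-\phi(JY)JX$ with an unknown one-form $\phi$, whereas the paper exploits that the degree of mobility equals two (Lemma~\ref{la:mobility}) and solves instead $\lie_vL=-a_{01}L^2+(a_{11}-a_{00})L+a_{10}\Id$ and $\lie_vg=a_{00}(n+1)\Id+a_{01}(L+\tr(L)\Id)$, where the unknowns $a_{ij}$ are \emph{constants} rather than an auxiliary tensor field; this is what makes the paper's system tractable and yields Proposition~\ref{prop:degenerate} cleanly. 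Your version could in principle be pushed through, at the cost of a larger prolongation analysis.

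The genuine gap is your claim that a homothety of $h$ can \emph{always} be compensated by a flow in the $x_0$- and $x_1$-directions, so that the dimension count is uniformly ``constant curvature $>$ 2-dimensional homothetic $>$ 1-dimensional homothetic''. That mechanism is exactly what fails in case D2a, and detecting the failure is what the paper's case-by-case integrability analysis (part~(ii) of Proposition~\ref{prop:degenerate} and Table~\ref{tab:integrability.Dx}) is for. In D2a one has $\rho=c_1e^{-3x_0}$ and $F=d_1$ constant, so
\begin{equation*}
	g \;=\; c_1e^{-3x_0}\left(-h+d_1^2\,dx_0^2+\tfrac{9}{d_1^2}\,\theta^2\right):
\end{equation*}
the entire $x_0$-dependence is an overall conformal factor, any flow in $x_0$ rescales all three blocks uniformly, and a rescaling of $h$ alone can never be absorbed. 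The integrability condition there is $C=0$: only \emph{Killing} fields of $h$ lift. Consequently, in D2a(ii) a two-dimensional homothetic algebra of $h$ still produces only a three-dimensional c-projective algebra---not the four-dimensional one your compensation argument predicts---which is precisely why (D1)(ii) and (D2a)(ii) have different dimensions in the theorem (in D1 the compensation does work, because $\rho=1/x_0$ rescales under $x_0\mapsto\lambda x_0$ while the $dx_0^2$ and $\theta^2$ blocks can be kept in balance by also scaling $x_1$). You acknowledge that prolongation conditions must be checked, but this obstruction is not a routine verification to be done at the end: it is case-dependent, it changes the answer, and without deriving it your argument yields a wrong dimension for D2a.
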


\noindent In the remainder of the paper we shall prove
Theorems~\ref{thm:CHSC}, \ref{thm:Liouville.main}, \ref{thm:complex.main}
and~\ref{thm:degenerate.main}.
The sections are organised as follows:
we begin by proving Theorem~\ref{thm:CHSC} in Section~\ref{sec:CHSC}.
Section~\ref{sec:method} outlines the methodology we apply for the proofs of
Theorems~\ref{thm:Liouville.main}, \ref{thm:complex.main}
and~\ref{thm:degenerate.main}. The proofs are then completed in
Sections~\ref{sec:Liouville},~\ref{sec:complex} and~\ref{sec:degenerate},
respectively.
The paper is complemented by an appendix, where we summarise some background
material that will likely help readers not yet familiar with the literature,
particularly~\cite{BMMR2015}. However, the appendix is not needed to
follow the main text of the paper.

\section{Constant holomorphic sectional curvature}\label{sec:CHSC}

\noindent
Theorem~\ref{thm:CHSC} identifies those Kähler metrics in 
Fact~\ref{fct:metrics} that are of constant HSC. In the current section we 
prove this theorem.
It should be noted that K\"ahler manifolds of constant HSC are, as such, well
understood, see for example~\cite{FKMR2010,BMMR2015,kruglikov_2016,CEMN2020}. The following fact is well-established.
\begin{fact}\label{fct:CHSC}
	Let $g$ be a K\"ahler surface metric ($n=2$) of constant HSC. Then the c-projective
	algebra of $g$ is (locally) isomorphic to that of the Fubini-Study metric 
	outlined in Example~\ref{ex:fubini-study}.
\end{fact}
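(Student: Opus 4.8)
The plan is to prove this by reducing to a single model and exploiting the c-projective invariance of the symmetry algebra. The decisive structural input is that the c-projective algebra $\projalg(g)$ depends only on the c-projective class $[g]$ and not on the chosen representative metric: since the flow of any c-projective vector field preserves the family of $J$-planar curves, such a field is an infinitesimal automorphism of the underlying c-projective structure $(M,J,[g])$, and two c-projectively equivalent metrics carry exactly the same such automorphisms. Hence it suffices to identify the local c-projective structure of a constant-HSC K\"ahler surface.

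First I would recall that a K\"ahler surface (complex dimension $n=2$, real dimension $4$) of constant HSC is locally c-projectively flat. This is the K\"ahler counterpart of the classical Beltrami theorem relating constant sectional curvature to projective flatness: for complex dimension $n\ge2$ the c-projective Weyl curvature vanishes precisely when the holomorphic sectional curvature is constant \cite{FKMR2010,kruglikov_2016}. Thus, in a neighbourhood of each point, $(M,J,[g])$ is equivalent to the flat c-projective model. Second, this flat model is realised simultaneously by all complex space forms --- by flat $\CC^2$, by the complex hyperbolic space $\CC H^2$, and by $\CC\mathbb{P}^2$ with the Fubini--Study metric of Example~\ref{ex:fubini-study} --- which are pairwise locally c-projectively equivalent. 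By the invariance noted above, the c-projective algebra of any constant-HSC surface is therefore isomorphic to that of one fixed representative, for which we take Fubini--Study. For the latter the algebra is classically $\mathfrak{sl}(3,\CC)$ \cite{MS1983,FKMR2010}, realised by the $16$ explicit generators recorded in Example~\ref{ex:fubini-study} (matching $\dim_\RR\mathfrak{sl}(3,\CC)=16$).

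I expect the main obstacle to be the first step: establishing, in a self-contained way, both the equivalence between constant HSC and local c-projective flatness and the Liouville-type statement that a c-projectively flat structure is locally equivalent to the flat model. Both are standard but nontrivial, requiring the computation of the c-projective Weyl curvature and then the integration of the flat structure to the model. For the purposes of a \emph{Fact} the most economical route is to invoke the established classification \cite{FKMR2010,kruglikov_2016,BMMR2015,CEMN2020} rather than to reprove these inputs. Once they are in place, the identification $\projalg(g)\cong\mathfrak{sl}(3,\CC)$ follows immediately from c-projective invariance together with the computation in Example~\ref{ex:fubini-study}.
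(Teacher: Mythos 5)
Your proposal is correct, and it reaches the crucial intermediate claim --- that every constant-HSC K\"ahler surface is locally c-projectively equivalent to $\CC\mathbb{P}^2$ with the Fubini--Study metric --- by a genuinely different route than the paper. The paper's own proof (the proposition in its appendix) first classifies such surfaces up to local \emph{isometry}: in Riemannian signature these are the classical models (Fubini--Study, Bergman, flat), and for arbitrary signature the authors argue that Bochner's proof survives with sign factors $\epsilon_i\in\{-1,1\}$ inserted; they then verify by direct computation that all of these normal forms share one and the same c-projective connection, namely the ODE system~\eqref{eqn:c.proj.conn.Fubini} of the Fubini--Study metric, hence all lie in a single c-projective class. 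You instead take the curvature-theoretic route: constant HSC is equivalent (for complex dimension $n\ge2$) to the vanishing of the c-projective Weyl curvature, and a flat c-projective structure is locally equivalent to the homogeneous model. Both arguments finish identically, using that $\projalg(g)$ is an invariant of the c-projective class $[g]$ and that the Fubini--Study algebra is $\mathfrak{sl}(3,\CC)$ with its $16$ real generators. What your route buys: it is signature-blind, since the Weyl-curvature criterion is tensorial, so pseudo-Riemannian signatures are covered automatically --- a point you should state explicitly, because your write-up lists only the Riemannian space forms $\CC^2$, $\CC H^2$ and $\CC\mathbb{P}^2$, while the paper's Fact concerns K\"ahler metrics of arbitrary signature and the paper must (and does) address this by re-inspecting Bochner's classification. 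What the paper's route buys: it stays elementary and self-contained, requiring only classical isometric normal forms plus a direct comparison of the $J$-planar-curve equations, it produces explicit coordinate models that are reused elsewhere in the appendix, and it avoids invoking the c-projective Weyl tensor and the Liouville/Cartan-type integration theorem for flat structures, neither of which is developed anywhere in the paper.
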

\noindent
For higher dimensions, a similar statement holds true.
In the theory of c-projective K\"ahler structures, the degree of mobility is an important attribute: informally speaking, the degree of mobility is
the minimal number of parameters needed to describe the c-projective class of a given K\"ahler metric $g$. 
Let $[g]$ be a c-projective K\"ahler structure. Then it can be shown that
there is a minimal number $N$ such that $[g]$ is parametrised by $N$ linearly
independent K\"ahler metrics $g_k$ ($1\leq k\leq N$) via
\[
	[g] = \left\{\quad
		\frac{
			\left(\sum_k t_k|\det(g_k)|^{\tfrac16}g_k^{-1}\right)^{-1}
		}{
			\sqrt{\det\left(\sum_k t_k|\det(g_k)|^{\tfrac16}g_k^{-1}\right)}
		}\quad : \quad
		t_k\in\RR
		\quad\right\}.
\]
This number $D([g])=N$ is called the degree of mobility of $[g]$. We also say
that $g\in[g]$ has degree of mobility $D(g)=N$.
\begin{lemma}[\cite{MD1978,MS1983,BMMR2015}]\label{la:mobility}~
	
	\noindent(i) Let $g$ be a K\"ahler metric of complex dimension~$n$. It is of constant HSC if and 
	only if $D(g)=(n+1)^2$.
	
	\noindent(ii) Let $g$ be a K\"ahler surface metric ($n=2$) of non-constant HSC, which
	admits an essential c-projective vector field. Then $D(g)=2$.
\end{lemma}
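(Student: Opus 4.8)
The plan is to realise the degree of mobility as the dimension of the solution space of a linear overdetermined PDE and then to analyse that space by prolongation. Concretely, the metrics c-projectively equivalent to $g$ are in bijection (via the normalisation in the definition, $A_g=\det(g)^{\tfrac16}g^{-1}$) with the nondegenerate solutions $A$ of the \emph{c-projective metrizability equation}, a first-order linear system for a section $A$ of $S^2TM$ that is Hermitian with respect to $J$. Since the equation is linear, its full solution space $\mathcal S$ is a vector space, and $D(g)=\dim\mathcal S$ (the nondegenerate solutions form an open dense subset once $\mathcal S\neq 0$, so counting metrics and counting solutions agree). The first step is to prolong this system: differentiating and repeatedly using the K\"ahler and curvature identities closes it into a linear connection $\nabla^{\mathrm{pr}}$ on an auxiliary bundle of rank $(n+1)^2$ (the real bundle of Hermitian forms on the c-projective tractor bundle). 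Solutions of the metrizability equation correspond exactly to $\nabla^{\mathrm{pr}}$-parallel sections, whence the universal bound $D(g)\le(n+1)^2$.

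For part (i), the value $(n+1)^2$ is attained precisely when $\nabla^{\mathrm{pr}}$ carries the maximal number of parallel sections, i.e.~when it is flat. Computing the curvature of $\nabla^{\mathrm{pr}}$ and setting it to zero, one finds that the obstruction is the c-projective Weyl curvature of $(M,J,[g])$, which for a K\"ahler metric vanishes if and only if $g$ has constant HSC. The flat model is $\CC\mathbb P^n$ with the Fubini--Study metric, whose c-projective class indeed carries the full $(n+1)^2$-parameter family; for $n=2$ this is the value $9$, consistent with Example~\ref{ex:fubini-study}. This gives the equivalence in (i).

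For part (ii) I would first argue $D(g)\ge 2$ by contraposition. Suppose $D(g)=1$, so that $\mathcal S=\langle A_g\rangle$. Any c-projective vector field $v$ has a flow $\phi_t$ preserving both $J$ and the c-projective class, hence acting linearly on $\mathcal S$; on a one-dimensional space this action is by rescaling, $\phi_t^*A_g=c(t)\,A_g$. Since a constant rescaling of $A_g$ corresponds to a homothety of $g$, the flow of $v$ consists of homotheties, so $v$ is homothetic and in particular affine. Thus no essential c-projective field can exist when $D(g)=1$, and the hypothesis of (ii) forces $D(g)\ge 2$. Combined with part (i) and the assumption of non-constant HSC, this already yields $2\le D(g)<(n+1)^2$.

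The remaining, and main, obstacle is the upper bound $D(g)\le 2$: one must exclude the intermediate values $3\le D(g)<(n+1)^2$ in the presence of an essential c-projective vector field. This is a gap (rigidity) phenomenon. The argument again proceeds through the prolongation: assuming $\dim\mathcal S\ge 3$ produces at least three parallel sections of $\nabla^{\mathrm{pr}}$, while the essential field $v$ induces a nontrivial flow on this multi-dimensional solution space (by the computation in the previous paragraph, essentiality means $\lie_v$ does not act merely by a scalar on $\mathcal S$). Feeding the resulting compatibility and integrability conditions back into the curvature relations of $\nabla^{\mathrm{pr}}$ forces the c-projective Weyl curvature to vanish, so that $g$ has constant HSC, contradicting the hypothesis. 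Hence $D(g)\ge 3$ is impossible for non-constant HSC together with an essential field, and $D(g)=2$ follows. The delicate part is precisely this algebraic-differential analysis of the prolonged system under the extra symmetry, which is the content of the cited works \cite{MD1978,MS1983,BMMR2015}.
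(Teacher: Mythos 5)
The paper gives no internal proof of this lemma: its ``proof'' is the single sentence that claim (i) is proven in \cite{MD1978,MS1983} and claim (ii) in \cite{BMMR2015}. So the only meaningful comparison is between your sketch and those cited arguments. Where you actually execute an argument, you are correct and on the standard route: the metrizability equation is linear, it prolongs to a linear connection on a bundle of real rank $(n+1)^2$, giving the universal bound $D(g)\le(n+1)^2$; maximal mobility is equivalent to flatness of that connection, which for a K\"ahler metric is equivalent to constant HSC (part (i)); and your contraposition argument for $D(g)\ge 2$ in part (ii) --- a c-projective flow acts linearly on the solution space, on a one-dimensional space it acts by scalars, scalar rescalings of $A_g$ correspond to homotheties, hence any c-projective field would be affine --- is sound. (A small inaccuracy: Example~\ref{ex:fubini-study} records the c-projective algebra $\mathfrak{sl}(3,\CC)$, of real dimension 16; it says nothing about the mobility value 9, so it is not a consistency check for that number.)

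The genuine gap is the upper bound $D(g)\le 2$, which is the entire content of part (ii). Your paragraph on it --- assume $\dim\mathcal S\ge 3$, the essential field acts nontrivially, and ``feeding the resulting compatibility and integrability conditions back into the curvature relations forces the c-projective Weyl curvature to vanish'' --- is a restatement of the desired conclusion, not an argument: nothing in your prolongation setup identifies which conditions arise, why they close up, or why they kill the Weyl tensor rather than merely constraining the solution space. The actual proof in \cite{BMMR2015} is a substantial rigidity analysis; its concrete trace in this paper is that degree of mobility $\ge 3$ forces every solution to satisfy the stronger Sinjukov-type system \eqref{eqn:sinjukov} with constant $B$ (the $V_4(B)$ condition exploited in Section~\ref{sec:CHSC}), and it is this extra structure, combined with the essential field, that eventually yields the contradiction with non-constant HSC. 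Since the paper itself also defers this step to the literature, your attempt matches the paper's treatment only in the sense that both ultimately cite \cite{BMMR2015}; as a self-contained proof, part (ii) remains unproven.
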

\begin{proof}
	Claim~(i) is proven in~\cite{MD1978,MS1983} and claim~(ii)
	in~\cite{BMMR2015}. See also~\cite{MR2015}.
\end{proof}

For what follows here and in the following sections, we also need a
certain (1,1)-tensor field.
In order to introduce it, let $g$ and $\hat g$ be c-projectively equivalent
metrics (not necessarily of dimension $2n=4$, but in the following we will
only consider K\"ahler surfaces). We define
\begin{equation}\label{eqn:L}
	L[g,\hat g] := \left|\frac{\det(\hat g)}{\det(g)}\right|^{\frac1{2(n+1)}}\,
	\hat g^{-1}g\,,
\end{equation}
see also Appendix~\ref{app:companion.metrics}.
We proceed in two steps in order to prove Theorem~\ref{thm:CHSC}:
\begin{enumerate}
	\item
		In the light of Lemma~\ref{la:mobility}, in the present section we 
		are looking for metrics listed in Fact~\ref{fct:metrics} whose degree 
		of mobility is larger than two.
		A necessary criterion is obtained as follows: If the degree of 
		mobility of any of the metrics in Fact~\ref{fct:metrics} is larger
		than two, then~$L$ satisfies the (special) Sinjukov
		equations\footnote{We remark that the converse is not true: not every 
		$V_n(B)$ geometry has degree of mobility larger than two.} of a
		so-called $V_4(B)$ geometry \cite{Mikes1998},
		\begin{subequations}\label{eqn:sinjukov}
		\begin{align}
			\label{eqn:sinjukov.1}
			\nabla_k L_{ij} &= g_{ik}\Lambda_{j} + g_{jk}\Lambda_{i}
										+\omega_{ik}\Lambda_{a}J\indices{^a_j}
										+\omega_{jk}\Lambda_{a}J\indices{^a_i}
			\\
			\label{eqn:sinjukov.2}
			\nabla_{j}\Lambda_{i} &= \mu g_{ij}+BL_{ij}
			\\
			\label{eqn:sinjukov.3}
			\nabla_k\mu &= 2B\Lambda_{k}
		\end{align}
		\end{subequations}
		with $\Lambda_{k}=\nabla_k\Lambda$ and $\Lambda=\frac14\tr(L)$, and where $B$ is a constant.
		We first determine necessary conditions for the parameters
		$\beta,\varsigma,c_i,d_i$. Indeed, if $g$ has constant HSC, it needs 
		to satisfy
		this system of partial differential equations (PDEs), where~$L$ is 
		computed from $g$ and a suitable companion metric as explained in 
		Appendix~\ref{app:companion.metrics}.
	\item
		For the special parameter configurations identified in the first
		step, we can check the HSC of the metrics directly, deciding if $g$ 
		has constant HSC and thus completing the proof.
\end{enumerate}
Since in the first step we merely need a necessary criterion, and since the
different types of metrics in Fact~\ref{fct:metrics} are qualitatively
different, we split the discussion according to the type of the metrics and
discuss each case separately.

\subsection{Cases L1--L4}
We proceed according to the outlined steps.
The metrics $g$ considered in the present section are those of Fact~\ref{fct:metrics} that are within the cases L1 to L4.
To obtain the necessary condition from Sinjukov's special PDE system, we consider~\eqref{eqn:sinjukov.2} locally as a matrix equation.
Its $(2,3)$-entry\footnote{Counting components starts at $0$ here.}
yields a condition of the form
$$ \alpha+B\beta = 0\,, $$
where $B\in\mathds R$, and where $\alpha$ and $\beta$ are functions depending on $\rho_0,\rho_1,F_0,F_1,\varepsilon$.
Provided $\beta\ne0$ (which is fulfilled in all cases discussed below), we may solve for $B$. We differentiate once w.r.t.~$x_0$ and obtain a necessary condition for $g$ having degree of mobility larger than two, $D(g)\geq3$.

Subsequently, in the second step, we check by a direct computation if, with the described necessary condition 
fulfilled, the metric has indeed constant HSC, or not.
The results are detailed in the following table.\medskip

\begin{center}
\begin{tabulary}{\linewidth}{l|p{5cm}|p{7cm}}
	Metric & Sinjukov condition & constant HSC? \\
	\toprule\toprule
	L1 & $c_0^2\varepsilon+c_1^2=0$ & Yes, if $\varepsilon=-1$ and $c_1=\pm c_0$ \\
	\midrule
	L2 & $\beta=-\frac12$, $c_1d_0^2\varepsilon+c_0d_1^2=0$
		& Yes, if $\varepsilon=\pm1$ and $c_1=-\varepsilon c_0\frac{d_1^2}{d_0^2}$ \\
	\cmidrule{2-3}
		& $\beta=-2$, $d_0^2\epsilon+d_1^2=0$
		& Yes, if $\varepsilon=-1$ and $d_1=\pm d_0$ \\
	\midrule
	L3 & Never possible & No \\
	\midrule
	L4 & $c_0^2\varepsilon+c_1^2=0$ & No
\end{tabulary}
\end{center}
In the last column of the table, ``yes'' indicates that the Liouville metrics of the respective type are of constant HSC if the stated conditions are met. ``No'' indicates that there do not exist such metrics for the respective case.

\subsection{Cases C1--C4}
The reasoning is identical to that in the cases C1--C4, with the formal replacements ($\varsigma=\varsigma_0+i\varsigma_1$)
\begin{itemize}
	\item \textit{Cases C1, C3 and C4:} $c_0\to\varsigma$, $c_1\to\bar\varsigma$ and $\varepsilon\to-1$,
	\item \textit{Case C2:} $\varepsilon=-1$ and $c_0\to1$, $c_1\to1$ and $d_0\to\varsigma$ , $d_1\to\bar\varsigma$.
\end{itemize}  
We arrive at the following table (recall $\varsigma=\varsigma_0+i\varsigma_1$):\medskip

\begin{center}
\begin{tabulary}{\linewidth}{l|p{6cm}|p{8cm}}
	Metric & Sinjukov condition & constant HSC? \\
	\toprule\toprule
	C1 & $\varsigma^2=\bar\varsigma^2$ & Yes, if $\varsigma_0=0$ or
	$\varsigma_1=0$ \\
	\midrule
	C2 & $\beta=-\frac12$, $\varsigma^2=\bar\varsigma^2$
		& Yes, if $\beta=-\frac12$ and either $\varsigma_0=0$ or $\varsigma_1=0$\\
	\cmidrule{2-3}
		& $\beta=-2$, $\varsigma^2=\bar\varsigma^2$ 
		& Yes, if $\beta=-2$ and either $\varsigma_0=0$ or $\varsigma_1=0$ \\
	\midrule
	C3 & Never possible & No \\
	\midrule
	C4 & $\varsigma^2=\bar\varsigma^2$ & No
\end{tabulary}
\end{center}

\subsection{Cases D1--D3}
Consider~\eqref{eqn:sinjukov.2} locally as a matrix equation.
The $(1,3)$-entry of the matrix equation gives
$$
	BF^3\rho^3+2\rho F'\rho'+2F\rho'^2-2F\rho\rho'' = 0
$$
for some constant $B\in\RR$.
Solving for $B$ (note that $F\rho\neq 0$) and differentiating once w.r.t.~$x_0$, we obtain an
explicit condition depending on $\rho$, $F$ and their derivatives only:
\[
3\rho^2\,F'^2\rho' - F\rho^2\,F''\rho'
+ 4F\rho\,F'\rho'^2 + 3F^2\rho'^3
- 3F\rho^2\,F'\rho'' - 4F^2\rho\,\rho'\rho''
+ F^2\rho^2\,\rho''' = 0\,.
\]
Evaluating this condition for each of the cases D1--D3, and then examining 
for constant HSC, we arrive at the following table.\medskip

\begin{center}
\begin{tabulary}{\linewidth}{l|p{6cm}|p{6cm}}
	Metric & Sinjukov condition & constant HSC? \\
	\toprule\toprule
	D1 & The constraint for higher mobility is trivial, i.e.~no constraint
		& Yes, if $h$ has vanishing Gau{\ss} curvature \\
	\midrule
	D2a & The constraint for higher mobility is trivial, i.e.~no constraint
		& Yes, if $h$ has Gau{\ss} curvature $-\frac9{d_1^2}$  \\
	\midrule
	D2b & Would require $\beta=-2$ or $\beta=1$, which are forbidden
		& No \\
	\midrule
	D3 & Never possible & No
\end{tabulary}
\end{center}

We have therefore proven Theorem~\ref{thm:CHSC}.

\section{Method for Theorems~\ref{thm:Liouville.main}, \ref{thm:complex.main}
and \ref{thm:degenerate.main}}\label{sec:method}

We now turn to the main purpose of this paper and compute the c-projective
algebras of the metrics of non-constant HSC in Fact~\ref{fct:metrics}.
Methodologically, we use an approach similar to that employed in
\cite{BMMR2015}, \cite{solodovnikov_1956} and \cite{MV2022}.
Lemma~\ref{la:mobility} ensures that the degree of mobility is precisely two
if a metric from Fact~\ref{fct:metrics} has non-constant HSC.
Metrics with constant HSC have, of course, already been identified in
Theorem~\ref{thm:CHSC}.
The method will subsequently be adapted for the individual cases of Liouville,
complex and degenerate type in the following three sections.
Many of the mentioned computations were performed and verified using the sagemath computer algebra software~\cite{sagemath}.
The following lemma is useful, see e.g.~\cite{bolsinov_2011,BMR2021}.
\begin{lemma}
	Let $g$ and $\hat g$ be c-projectively equivalent K\"ahler surfaces which
	are
	not of constant HSC.
	Let $v$ be a c-projective vector field of $g$.
	Then there are constants $a_{ij}\in\RR$ ($1\leq i,j\leq 2$) such that
	\begin{align}
		\lie_vL &= -a_{01}L^2+(a_{11}-a_{00})L+a_{10}\Id
		\label{eqn:LvL}
		\\
		\lie_vg &= -a_{00}(n+1)\,g-a_{01}\,\left(gL+\tfrac12\tr(L)g\right)\,.
		\label{eqn:Lvg}
	\end{align}
	Particularly, if $v$ is essential, then $a_{01}\ne0$, and if $v$ is properly homothetic (i.e.~homothetic, but not Killing) for $g$, then $a_{01}=0$ and $a_{00}\ne0$. If $v$ is
	Killing for $g$, then $a_{00}=a_{01}=0$.
\end{lemma}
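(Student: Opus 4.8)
The plan is to characterize c-projective vector fields $v$ through their action on the canonical $(1,1)$-tensor $L=L[g,\hat g]$ defined in \eqref{eqn:L}, exploiting the fact (Lemma~\ref{la:mobility}(ii)) that non-constant HSC forces the degree of mobility to equal exactly two. The key observation is that a c-projective vector field, by definition, maps $J$-planar curves to $J$-planar curves, hence it preserves the c-projective class $[g]$; its flow therefore acts on the finite-dimensional space of metrics c-projectively equivalent to $g$. Since $D(g)=2$, this equivalence class is parametrized by two linearly independent solutions, and the pair $(g,\hat g)$ spans the relevant solution space of the metrizability (Sinjukov-type) equations. The Lie derivative $\mathcal{L}_v$ therefore acts as a linear endomorphism on this $2$-dimensional solution space, and this is the source of the four constants $a_{ij}$.

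First, I would recall that solutions of the c-projective metrizability equation are in bijection (via \eqref{eqn:L}) with a certain space of special $(1,1)$-tensors, or equivalently with sections satisfying the prolonged system; when $D(g)=2$, this solution space is spanned by the tensors corresponding to $g$ and $\hat g$, which in turn correspond to $\mathrm{Id}$ and $L$. Because $v$ is c-projective, its flow preserves this solution space, so $\mathcal{L}_v$ maps the span of $\{\mathrm{Id}, L\}$ into itself. Writing the matrix of $\mathcal{L}_v$ in the basis $\{\mathrm{Id},L\}$ as $\begin{pmatrix} a_{00} & a_{10} \\ a_{01} & a_{11}\end{pmatrix}$ produces precisely the linear combinations appearing in \eqref{eqn:LvL}: one gets $\mathcal{L}_v L = a_{01}(\cdots)+\cdots$, and the appearance of $-a_{01}L^2$ (rather than a term linear in $L$) reflects that $L$ is built from $\hat g^{-1}g$ with the determinantal weight, so the naive linear action on metrics translates, after passing to the tensor $L$, into the quadratic expression shown. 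Carefully tracking the determinantal prefactor $|\det g/\det\hat g|^{1/(2(n+1))}$ under $\mathcal{L}_v$ is what converts linear data on metrics into \eqref{eqn:LvL}.

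Second, for \eqref{eqn:Lvg} I would compute $\mathcal{L}_v g$ directly: since $v$ preserves $[g]$ but need not preserve $g$ itself, $\mathcal{L}_v g$ must again lie in the tangent space to the c-projective class, i.e.\ be expressible through $g$ and $L$. The structure of the metrizability equation, together with the normalization weight in \eqref{eqn:L}, fixes the coefficients to be $a_{00}(n+1)\,\mathrm{Id}$ and $a_{01}(L+\tr(L)\mathrm{Id})$; the same two constants $a_{00},a_{01}$ reappear because the action of $\mathcal{L}_v$ on $g$ and on $L$ is governed by the single endomorphism of the two-dimensional solution space. Finally, the trichotomy (essential, properly homothetic, Killing) follows by inspection: $a_{01}\ne0$ exactly when $v$ genuinely moves $\hat g$ relative to $g$, i.e.\ acts essentially on the solution space and hence is non-affine; if $a_{01}=0$ then $\mathcal{L}_v g=a_{00}(n+1)g$, which is a homothety, properly so iff $a_{00}\ne0$, and Killing iff both vanish.

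\textbf{The main obstacle} I anticipate is the bookkeeping in converting the \emph{linear} action of $\mathcal{L}_v$ on the space of solutions (metrics) into the \emph{quadratic} relation \eqref{eqn:LvL} for $L$, and in verifying that the \emph{same} constants $a_{00},a_{01}$ that appear in the $L$-equation are exactly those governing $\mathcal{L}_v g$. This requires differentiating the determinantal weight $|\det g/\det\hat g|^{1/(2(n+1))}$ along the flow of $v$ and showing that the resulting trace terms assemble precisely into the stated combinations; the identity $\mathcal{L}_v(\det g)=\det g\cdot\tr(g^{-1}\mathcal{L}_v g)$, combined with $\mathcal{L}_v g$ from \eqref{eqn:Lvg}, must be shown to be consistent with \eqref{eqn:LvL}. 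Once this consistency is checked, the remaining verifications — that the coefficients of $\mathrm{Id}$, $L$, $L^2$ match and that $\mathcal{L}_v$ indeed preserves the two-dimensional solution space — are essentially linear algebra on a two-dimensional space and should follow without difficulty.
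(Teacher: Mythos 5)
Your proposal is essentially correct, but note that the paper itself offers no proof of this lemma: it is imported from the literature (the citations \cite{bolsinov_2011,BMR2021,MV2022} given immediately before the statement), so there is no in-paper argument to compare against. What you outline is precisely the standard argument of those references, and it works, with two refinements. First, the space that $\lie_v$ preserves is not literally the span of the $(1,1)$-tensors $\Id$ and $L$, but the solution space of the c-projectively \emph{invariant} metrizability equation (equation~\eqref{eqn:nab.L} of the appendix), conveniently written for the weighted tensors $\sigma_g=|\det g|^{1/(2(n+1))}g^{-1}$ and $\sigma_{\hat g}$; invariance of that equation under the flow of $v$ (which preserves $J$ and $[\nabla]$) is what makes $\lie_v$ a linear endomorphism of this space, and the two-dimensionality comes from the degree-of-mobility statement --- strictly speaking Lemma~\ref{la:mobility}(ii) needs an essential c-projective field, not just non-constant HSC, so in the degenerate situations (e.g.\ all c-projective fields affine, or $\hat g$ proportional to $g$) one has to argue separately, as the cited references do. Second, the quadratic term in~\eqref{eqn:LvL} is not produced by the determinantal weight (the weight is what makes the equation linear and invariant); it comes from differentiating the inverse factor in $L=\sigma_{\hat g}\,\sigma_g^{-1}$: writing $\lie_v\sigma_g=a_{00}\sigma_g+a_{01}\sigma_{\hat g}$ and $\lie_v\sigma_{\hat g}=a_{10}\sigma_g+a_{11}\sigma_{\hat g}$, the Leibniz rule gives
\[
	\lie_vL=(\lie_v\sigma_{\hat g})\,\sigma_g^{-1}
	-\sigma_{\hat g}\,\sigma_g^{-1}(\lie_v\sigma_g)\,\sigma_g^{-1}
	= a_{10}\Id+a_{11}L-L\,(a_{00}\Id+a_{01}L)
	= -a_{01}L^2+(a_{11}-a_{00})L+a_{10}\Id\,,
\]
which is exactly~\eqref{eqn:LvL}, while~\eqref{eqn:Lvg} follows from the same relation $\lie_v\sigma_g=a_{00}\sigma_g+a_{01}\sigma_{\hat g}$ by expanding $\lie_v\sigma_g$ in terms of $\lie_vg$ and $\tr(g^{-1}\lie_vg)$ and solving (the trace of the identity fixes $\tr(g^{-1}\lie_vg)$), which also explains why the same constants $a_{00},a_{01}$ occur in both equations. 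Your concluding trichotomy by inspection of~\eqref{eqn:Lvg} is then correct, provided one notes that $L+\tr(L)\Id$ is not proportional to $\Id$ (equivalently $\hat g$ is not proportional to $g$), which is what makes the implications such as ``essential $\Rightarrow a_{01}\ne0$'' well posed.
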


We prove the main theorems by solving the equations~\eqref{eqn:LvL}
and~\eqref{eqn:Lvg}.
A partial solution of~\eqref{eqn:LvL} is well known and can be exploited in
the following. It concerns eigenvalues of $L$ and is partly folklore. An 
explicit proof can be found in~\cite{BMR2021,MV2022}.
\begin{lemma}\label{la:vf}
	Let $g$ and $\hat g$ be c-projectively equivalent K\"ahler surfaces which
	are not of constant HSC, and let $v$ be a
	c-projective vector field.
	Any eigenvalue $f$ of~\eqref{eqn:L} satisfies
	$$
		v(f) = -a_{01}f^2+(a_{11}-a_{00})f+a_{10}\,.
	$$
\end{lemma}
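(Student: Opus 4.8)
The plan is to derive the eigenvalue evolution law in Lemma~\ref{la:vf} directly from the tensorial identity~\eqref{eqn:LvL}, treating $f$ as a (locally defined, smooth) eigenvalue of the $(1,1)$-tensor $L$. First I would fix a point where $f$ is an eigenvalue with an associated eigenvector $\xi$, so that $L\xi = f\xi$. The idea is that the scalar relation for $v(f)$ should fall out of pairing~\eqref{eqn:LvL} with the eigendata of $L$. Concretely, since $L\xi=f\xi$, one has $L^2\xi=f^2\xi$ and $\Id\,\xi=\xi$, so applying the right-hand side of~\eqref{eqn:LvL} to $\xi$ yields
\[
	\left(-a_{01}L^2+(a_{11}-a_{00})L+a_{10}\Id\right)\xi
	= \left(-a_{01}f^2+(a_{11}-a_{00})f+a_{10}\right)\xi\,.
\]
So the entire content of the lemma is to show that the left-hand side $(\lie_vL)\xi$ has $\xi$-component exactly $v(f)$, i.e.\ that the eigenvalue flows along $v$ precisely by the scalar on the right.

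Next I would make the pairing with $\xi$ precise. The cleanest route is to differentiate the defining relation $L\xi=f\xi$ along $v$ using the Lie derivative, via the Leibniz rule $\lie_v(L\xi)=(\lie_vL)\xi+L(\lie_v\xi)$, and compare with $\lie_v(f\xi)=v(f)\,\xi+f\,\lie_v\xi$. Subtracting gives
\[
	(\lie_vL)\xi = v(f)\,\xi+(f\Id-L)(\lie_v\xi)\,.
\]
To extract $v(f)$ I would pair this with a suitable left eigenvector (covector) $\eta$ of $L$ for the same eigenvalue $f$, normalised so that $\eta(\xi)=1$; since $\eta\circ(f\Id-L)=0$, the troublesome term $(f\Id-L)(\lie_v\xi)$ is annihilated, leaving $\eta\big((\lie_vL)\xi\big)=v(f)$. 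Substituting the already-computed right-hand side of~\eqref{eqn:LvL} then gives the claimed formula.

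The main obstacle is the regularity and multiplicity of the eigenvalue: the argument as sketched presumes $f$ is a smooth eigenvalue with smoothly varying eigenvector $\xi$ and a well-defined dual projector, which holds away from points where eigenvalues collide or the eigenspace structure degenerates. In the Kähler surface setting $L$ is $g$-self-adjoint and commutes with $J$, so its spectrum comes in constrained patterns (real eigenvalues in pairs, or complex-conjugate pairs), and generically the eigenvalues are simple enough that local smooth eigenvalue branches exist on an open dense set; I would state the result on that open dense set and invoke continuity to conclude. Alternatively, and perhaps more robustly, I would avoid choosing eigenvectors altogether by applying $v$ to the characteristic equation: writing $\det(L-f\Id)=0$ (equivalently using the elementary symmetric functions / trace identities $\tr(L^k)$), differentiating along $v$, and substituting $\lie_v\tr(L^k)=\tr(L^{k-1}\lie_vL)$ together with~\eqref{eqn:LvL}. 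This turns the problem into a purely algebraic identity in the $\tr(L^k)$ that reproduces $v(f)=-a_{01}f^2+(a_{11}-a_{00})f+a_{10}$ without any smoothness assumption on eigenvectors, and cleanly handles repeated eigenvalues; the bookkeeping with symmetric functions is the only delicate part, and since the paper defers the explicit proof to~\cite{MV2022}, I would present the eigenvector computation as the conceptual argument and remark that the symmetric-function version justifies it across degeneracies.
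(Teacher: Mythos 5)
Your core computation is correct, and in fact there is no in-paper proof to compare it with: the paper states Lemma~\ref{la:vf} as partly folklore and defers the explicit proof to~\cite{MV2022}. The argument you give --- Lie-differentiating $L\xi=f\xi$ to get $(\lie_vL)\xi=v(f)\,\xi+(f\Id-L)(\lie_v\xi)$, killing the second term with a left eigenvector normalised against $\xi$, and substituting~\eqref{eqn:LvL} --- is exactly the standard argument for statements of this type, so you are supplying the proof the paper omits rather than taking a genuinely different route.

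Two cautions on the regularity discussion, which you rightly identify as the only delicate point. First, the pairing step needs a left eigenvector $\eta$ for $f$ with $\eta(\xi)\neq0$. Since $L$ is $g$-self-adjoint the natural candidate is $\eta=g(\xi,\cdot\,)$, but the paper allows arbitrary signature: $g(\xi,\xi)$ can vanish for a null eigenvector, and in indefinite signature a $g$-self-adjoint operator can even carry nontrivial Jordan blocks, in which case \emph{every} left eigenvector for $f$ annihilates $\xi$ and the normalisation $\eta(\xi)=1$ is impossible. So the eigenvector argument genuinely requires the genericity restriction you impose; it is not merely a convenience. Second, your fallback via $\det(L-f\Id)=0$ is not as clean at repeated eigenvalues as you claim: differentiating the characteristic polynomial $P$ along $v$ yields a relation of the form $P'(f)\,v(f)+\cdots=0$, and $P'(f)=0$ precisely at a multiple eigenvalue, so that route degenerates at the same points as the eigenvector route. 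What actually closes the argument is the structure theory behind the lemma: for $L$ solving the c-projective compatibility equation~\eqref{eqn:nab.L} with degree of mobility two, the eigenvalue branches are smooth and simple eigenvalues occur on an open dense set, so proving the identity there and extending by continuity of both sides --- as you propose --- is a legitimate completion, but it should be stated as relying on that input rather than on the symmetric-function bookkeeping alone.
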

Our study below is going to be local, and therefore the
equations~\eqref{eqn:LvL} and~\eqref{eqn:Lvg} can be represented by $4\by4$
matrices. In fact, for the metrics of Fact~\ref{fct:metrics} these matrices
have a block structure consisting of four $2\by2$ matrices each.
\begin{align*}
	\text{Eq.~\eqref{eqn:LvL}:}\qquad &&
	\lie_vL+a_{01}L^2-(a_{11}-a_{00})L-a_{10}\Id&=0
	\qquad\leftrightarrow&
	\begin{pmatrix} (*a) & (*b) \\ (*c) & (*d) \end{pmatrix}=0
	\\
	\text{Eq.~\eqref{eqn:Lvg}:}\qquad &&
	\lie_vg  +a_{00}(n+1)\,g+a_{01}\,\left(gL+\tfrac12\tr(L)g\right)&=0
	\qquad\leftrightarrow&
	\begin{pmatrix} (*A) & (*B) \\ (*C) & (*D) \end{pmatrix}=0
\end{align*}

\section{Proof of Theorem~\ref{thm:Liouville.main} (Liouville type)}
\label{sec:Liouville}

We proceed in two steps. First we prove the following auxiliary
result, which allows us to reduce the problem to the univariate realm.
\begin{lemma}\label{la:Liouville}
	Any c-projective symmetry of a Liouville metric in the coordinates of
	Fact~\ref{fct:metrics} is of the form
	\[
		v = v^0(x_0)\partial_{x_0}+v^1(x_1)\partial_{x_1}
			+v^2(s_0,s_1)\partial_{s_0}+v^3(s_0,s_1)\partial_{s_1}\,.
	\]
	In particular, if $F_1^2\rho_0'^2+\varepsilon F_0^2\rho_1'^2\ne0$,
	then even
	\begin{align*}
		v^2(s_0,s_1) &= \mu s_0-a_{10}s_1+\nu
		\\
		v^3(s_0,s_1) &= -a_{01}s_0+(\mu+a_{00}-a_{11})s_1+\nu_0
	\end{align*}
	for constants $\mu,\nu$ and $\nu_0$.
\end{lemma}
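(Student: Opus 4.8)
The plan is to exploit the block structure of the matrix equations~\eqref{eqn:LvL} and~\eqref{eqn:Lvg} for a Liouville metric. First I would compute the $(1,1)$-tensor $L=L[g,\hat g]$ explicitly for the Liouville-type metric of Fact~\ref{fct:metrics}, using a companion metric $\hat g$ as in Appendix~\ref{app:companion.metrics}. The key structural observation is that $L$ is \emph{diagonal} in the coordinates $(x_0,x_1,s_0,s_1)$, with eigenvalues given by the functions $\rho_0(x_0)$ and $\rho_1(x_1)$ (each appearing with multiplicity two, once in the $x$-block and once in the $s$-block). With a diagonal $L$ in hand, equation~\eqref{eqn:LvL} becomes a system that tightly constrains the components $v^0,v^1,v^2,v^3$ of $v$.

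\medskip

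Writing $v=\sum_\alpha v^\alpha\partial_\alpha$ with a priori arbitrary dependence on all four coordinates, I would read off~\eqref{eqn:LvL} component by component. The diagonal entries reproduce, via Lemma~\ref{la:vf}, the Riccati-type relations $v(\rho_i)=-a_{01}\rho_i^2+(a_{11}-a_{00})\rho_i+a_{10}$; since $\rho_0$ depends only on $x_0$ and $\rho_1$ only on $x_1$, and since $\rho_0\ne\rho_1$ generically, this forces $v(\rho_0)$ to be a function of $x_0$ alone and $v(\rho_1)$ of $x_1$ alone, whence (because $\rho_i'\ne0$) the components $v^0$ and $v^1$ depend only on $x_0$ and only on $x_1$ respectively. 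The \emph{off-diagonal} entries of~\eqref{eqn:LvL} are where the separation of the $s$-variables is enforced: because $L$ is diagonal, $(\lie_vL)_{ij}$ for $i\ne j$ reduces to terms of the form $(\text{eigenvalue}_i-\text{eigenvalue}_j)\,\partial_j v^i$, and the vanishing of these components (the constants $a_{ij}L^2$, $L$, $\Id$ pieces are all diagonal) forces the cross-derivatives of $v^2,v^3$ with respect to $x_0,x_1$ to vanish, so that $v^2,v^3$ depend only on $(s_0,s_1)$. This establishes the first, general assertion of the lemma.

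\medskip

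For the sharper conclusion under the hypothesis $F_1^2\rho_0'^2+\varepsilon F_0^2\rho_1'^2\ne0$, I would turn to equation~\eqref{eqn:Lvg}, whose lower-right ($s$-)block is built from the metric coefficients carrying precisely the factor $F_1^2\rho_0'^2+\varepsilon F_0^2\rho_1'^2$ (up to the overall $(\rho_0-\rho_1)^{-1}$ conformal factor). Since $L$ restricted to the $s$-block is still diagonal with entries $\rho_0,\rho_1$, the right-hand side $a_{00}(n+1)\Id+a_{01}(L+\tr(L)\Id)$ is an explicit affine-in-$L$ expression, and matching it against $\lie_vg$ on the $s$-block yields a closed linear first-order system for $v^2(s_0,s_1)$ and $v^3(s_0,s_1)$. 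The non-vanishing assumption is exactly what makes this block non-degenerate, so the system determines $\partial_{s_0}v^2,\partial_{s_1}v^2,\partial_{s_0}v^3,\partial_{s_1}v^3$ as constants in terms of $a_{00},a_{01},a_{10},a_{11}$; integrating gives the stated affine forms
\[
	v^2 = \mu s_0-a_{10}s_1+\nu,\qquad
	v^3 = -a_{01}s_0+(\mu+a_{00}-a_{11})s_1+\nu_0,
\]
with $\mu,\nu,\nu_0$ the remaining integration constants.

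\medskip

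I expect the main obstacle to be the bookkeeping in the $s$-block of~\eqref{eqn:Lvg}: disentangling which combination of coefficients $\rho_i'$, $F_i$ multiplies each derivative $\partial_{s_j}v^k$, and verifying that the coefficient matrix of the resulting linear system is invertible precisely when $F_1^2\rho_0'^2+\varepsilon F_0^2\rho_1'^2\ne0$. The symmetry component $(3,2)$ versus $(2,3)$ in~\eqref{eqn:LvL} and the need to keep the $\varepsilon$-signs correct throughout make this a delicate but mechanical computation; the conceptual content is entirely in the diagonality of $L$, which cleanly decouples the $x$- and $s$-dependence.
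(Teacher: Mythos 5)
Your proof has a genuine gap, and it originates in the ``key structural observation'' that $L$ is diagonal in the coordinates of Fact~\ref{fct:metrics}. It is not: $L$ is only \emph{block}-diagonal, and its $s$-block is the companion-type matrix $\left(\begin{smallmatrix}\rho_0+\rho_1 & \rho_0\rho_1\\ -1 & 0\end{smallmatrix}\right)$, whose eigenvalues are indeed $\rho_0,\rho_1$ but whose eigenvectors depend on $x_0,x_1$, so the coordinates $(s_0,s_1)$ do not diagonalise it. As a consequence, the off-diagonal blocks of~\eqref{eqn:LvL} do not yield what you claim: computing the lower-left block $(*c)$ with the actual $L$ gives only the two relations $\partial_{x_0}v^2+\rho_0\,\partial_{x_0}v^3=0$ and $\partial_{x_1}v^2+\rho_1\,\partial_{x_1}v^3=0$, i.e.\ two equations in four unknown cross-derivatives, which do not force $v^2,v^3$ to be independent of $x_0,x_1$. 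Moreover, even if $L$ \emph{were} diagonal with eigenvalues $(\rho_0,\rho_1,\rho_0,\rho_1)$, your argument would still fail for a structural reason: the off-diagonal component of $\lie_vL$ pairing coordinate $j$ with coordinate $i$ carries the factor $(\lambda_i-\lambda_j)$, and for the pairs sharing an eigenvalue (the $(s_0,x_0)$- and $(s_1,x_1)$-type pairs) this factor vanishes, so precisely the cross-derivatives you need to kill are unconstrained. This is why the paper does not establish the first claim from~\eqref{eqn:LvL} alone: it additionally invokes the block $(*B)$ of the metric equation~\eqref{eqn:Lvg}, which yields $\rho_0'\,\partial_{x_0}v^3=0$ and $\rho_1'\,\partial_{x_1}v^3=0$, and only the combination of $(*B)$ with $(*c)$ gives $v^3=v^3(s_0,s_1)$ and then $v^2=v^2(s_0,s_1)$.

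The second half of your argument has a related flaw, in mirror image: there you use only~\eqref{eqn:Lvg}, whereas the paper again needs both equations. You claim the $s$-block of~\eqref{eqn:Lvg} determines all four derivatives $\partial_{s_j}v^k$ as constants expressed through $a_{00},a_{01},a_{10},a_{11}$; this contradicts the very statement being proved, in which $\mu=\partial_{s_0}v^2$ is a \emph{free} constant, not a function of the $a_{ij}$ (and it must remain free --- it accounts for an extra generator in the resulting algebras). In the paper, the first-order relations $\partial_{s_0}v^3=-a_{01}$, $\partial_{s_1}v^2=-a_{10}$ and $\partial_{s_1}v^3-\partial_{s_0}v^2=a_{00}-a_{11}$ come from the $s$-block $(*d)$ of~\eqref{eqn:LvL}, exploiting that $\rho_0\rho_1$ and $\rho_0+\rho_1$ are non-constant functions of the $x$-variables while $v^2,v^3$ are not; the $s$-block $(*D)$ of~\eqref{eqn:Lvg} is then used quite differently from your proposal: one differentiates its $(2,2)$-component with respect to $s_0$ and $s_1$ to obtain $(F_1^2\rho_0'^2+\veps F_0^2\rho_1'^2)\,\partial^2_{s_0}v^2=0$ and $(F_1^2\rho_0'^2+\veps F_0^2\rho_1'^2)\,\partial_{s_0}\partial_{s_1}v^2=0$, so the hypothesis enters as the non-vanishing of that scalar factor in front of \emph{second} derivatives, not as invertibility of a first-order coefficient matrix. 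In short, both assertions of the lemma require the interplay of~\eqref{eqn:LvL} and~\eqref{eqn:Lvg}; an argument that uses only one of them in each step, as yours does, cannot close.
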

Using this lemma, the proof of Proposition~\ref{prop:degenerate} is then
going to continue along the following lines:
The remaining algebraic conditions are of the form
\begin{equation}\label{eqn:liouville.cases.algebraic.condition}
	\sum_\mu\sum_\nu\ c_{\mu\nu} F_\mu(x_0)\hat F_\nu(x_1) = 0
\end{equation}
where the $c_{\mu\nu}$ depend on the parameters $a_{ij}$ and on the 
parameters of the metric.
Without losing generality, we may take the functions $F_\mu$ (respectively
$\hat F_\nu$) to be linearly independent functions (otherwise use linear
dependence to replace some functions until linear independence is 
achieved, changing the functions $F_\mu$ and $\hat F_\nu$ 
in~\eqref{eqn:liouville.cases.algebraic.condition}).
We hence conclude
\[
	c_{\mu\nu} = 0 \quad\forall\ \mu,\nu\,,
\]
and then solve all remaining conditions, arriving eventually at
Theorem~\ref{thm:Liouville.main}.

In the following three subsections, we first prove Lemma~\ref{la:Liouville}, then discuss the exceptional case
$$ F_1^2\rho_0'^2+\varepsilon F_0^2\rho_1'^2=0\,, $$
excluded in Lemma~\eqref{la:Liouville}, in Section~\ref{sec:liouville-exceptional}. The proof of Theorem~\ref{thm:Liouville.main} is then completed in Section~\ref{sec:liouville-finalise}.

\subsection{Proof of Lemma~\ref{la:Liouville}}
We begin the proof by computing the tensor $L$.
For the metric $g$ we have, in the coordinates $(x_0,x_1,s_0,s_1)$,
$$
	L = \begin{pmatrix}
			\rho_0(x_0) &&& \\
			& \rho_1(x_1) && \\
			&& \rho_0+\rho_1 & \rho_0\rho_1 \\
			&& -1 & 0
		\end{pmatrix}
$$
and take the c-projective vector field as
$
	v = (v^0,v^1,v^2,v^3)
$
where each component is a function $v^i=v^i(x_0,x_1,s_0,s_1)$ in four
variables.
We first take the (0,0) and (1,1) components of~$(*a)$, i.e.\ which are the
equations from Lemma~\ref{la:vf}.
Since $\rho_0$ and $\rho_1$ are non-constant, we find ($j\in\{0,1\}$)
\begin{align*}
	v^j &= -\frac{
				a_{01}\rho_j(x_j)^2 + (a_{00}-a_{11})\rho_j(x_j) - a_{10}
			}{
				\frac{\partial\rho_j}{\partial x_j}
			}
\end{align*}
and conclude $v^0=v^0(x_0)$ as well as $v^1=v^1(x_1)$.
Resubstituting into~\eqref{eqn:LvL}, the block $(*b)$ is redundant, and the
block $(*c)$ yields
\begin{align*}
	\rho_0(x_0)\frac{\partial v^3}{\partial x_0}
	+\frac{\partial v^2}{\partial x_0} = 0
	\\
	\rho_1(x_1)\frac{\partial v^3}{\partial x_1}
	+\frac{\partial v^2}{\partial x_1} = 0
\end{align*}
Moreover, then the block $(*B)$ implies
\begin{align*}
	\frac{\partial\rho_0(x_0)}{\partial x_0}
	\frac{\partial v^3}{\partial x_0} = 0
	\\
	\frac{\partial\rho_1(x_1)}{\partial x_1}
	\frac{\partial v^3}{\partial x_1} = 0
	\\
	(\rho_0-\rho_1)\,
	\frac{\partial\rho_1}{\partial x_1}
	\frac{\partial v^3}{\partial x_1} = 0
\end{align*}
We conclude $v^3=v^3(s_0,s_1)$ and then $v^2=v^2(s_0,s_1)$.

Now revisit the block $(*d)$. From the (3,3) and (3,2) components, we obtain
\begin{align*}
	\left(a_{01}+\frac{\partial v^3}{\partial s_0}\right)\rho_0\rho_1
	+\left(a_{10}+\frac{\partial v^2}{\partial s_1}\right) &= 0
	\\
	\left(b+\frac{\partial v^3}{\partial s_0}\right)(\rho_0+\rho_1)
	+\left(a_{00}-a_{11}+\frac{\partial v^2}{\partial s_0}
			-\frac{\partial v^3}{\partial s_1}\right) &= 0
\end{align*}
and conclude
\begin{align*}
	\frac{\partial v^3}{\partial s_0} &= -a_{01}
	\\
	\frac{\partial v^2}{\partial s_1} &= -a_{10}
	\\
	\frac{\partial v^3}{\partial s_1}-\frac{\partial v^2}{\partial s_0}
	&= a_{00}-a_{11}\,.
\end{align*}
With this additional information, we move on to the block $(*D)$. After
differentiating the (2,2) component w.r.t.~$s_0$ and $s_1$, respectively,
we have
\begin{align*}
	(F_1^2\rho_0'^2+\varepsilon F_0^2\rho_1'^2)\,
	\frac{\del^2v^2}{\del s_0^2} &= 0 \\
	(F_1^2\rho_0'^2+\varepsilon F_0^2\rho_1'^2)\,
	\frac{\del^2v^2}{\del s_0\del s_1} &= 0
\end{align*}
Assuming
\begin{equation}\label{eqn:exceptional.criterion.L}
	F_1^2\rho_0'^2+\varepsilon F_0^2\rho_1'^2\ne0\,,
\end{equation}
we infer
\[
	\frac{\del^2v^2}{\del s_0^2}=0
	\qquad\text{and}\qquad
	\frac{\del^2v^2}{\del s_0\del s_1}=0\,.
\]
Therefore
\[
	v^2(s_0,s_1) = \mu s_0-a_{10}s_1+\nu\qquad
	(\mu,\nu\in\RR)
\]
and then
\[
	v^3(s_0,s_1) = -a_{01}s_0+(\mu+a_{00}-a_{11})s_1+\nu_0\qquad
	(\mu,\nu_0\in\RR)\,.
\]
For this result we have assumed~\eqref{eqn:exceptional.criterion.L}. The next 
section is dedicated to the complementary case.

\subsection{The exceptional branch of Lemma~\ref{la:Liouville}}\label{sec:liouville-exceptional}

According to Lemma~\ref{la:Liouville}, an exceptional situation occurs if the 
condition~\eqref{eqn:exceptional.criterion.L} is not satisfied. In this case 
we have
\begin{equation}\label{eqn:L.exceptional}
	F_1^2\rho_0'^2+\varepsilon F_0^2\rho_1'^2 = 0\,.
\end{equation}
As $F_i$, $\rho_i$ and $\varepsilon$ are real and non-vanishing, and 
$\varepsilon^2=1$, we conclude that~\eqref{eqn:L.exceptional} implies
$$ \varepsilon=-1. $$
We therefore have to consider the exceptional branch arising if
\begin{equation}\label{eqn:L.exceptional.2}
	F_1^2\rho_0'^2-F_0^2\rho_1'^2 = 0
\end{equation}
holds. We are now going to study this condition for the metrics L1--L4.

\subsubsection{Metrics L1}\label{sec:exceptional.L1}
We begin with the metrics of L1 type. We have
\[
	\rho_i=x_i\,,\quad
	F_i = c_i\in\RR.
\]
From the previous discussion we also infer ($t,\tau\in\RR$)
\[
	a_{00}=\frac35t\,,\quad
	a_{01}=0\,,\quad
	a_{11}=-\frac25t\,,\qquad
	a_{10}=\tau\,,\qquad
	\mu = -2t\,.
\]
Condition~\eqref{eqn:L.exceptional.2} thus becomes
\[
	c_0^2=c_1^2\quad\Rightarrow\quad c_1=\pm c_0\,.
\]
From the equations in block $(*d)$ we obtain, after substitution of the
previously found equations,
\begin{align*}
	\frac{\del v^2}{\del s_1} &= -a_{10} \\
	\frac{\del v^3}{\del s_0} &= -a_{01} \\
	\frac{\del v^2}{\del s_0}-\frac{\del v^3}{\del s_1} &= a_{00}-a_{11}
\end{align*}
Next, substituting everything found so far, the block $(*A)$ yields
\[
	a_{01}=0\,,\quad
	a_{11}=-\frac23a_{00}
\]
Finally, with these solutions, we infer from the block $(*D)$ that
\[
	\frac{\del v^2}{\del s_0} = -\frac{10}{3}a_{00}\,,
\]
and we obtain
\begin{align*}
	v^0 &= -\frac53a_{00}x_0+a_{10} \\
	v^1 &= -\frac53a_{00}x_1+a_{10} \\
	v^2 &= -\frac{10}{3}a_{00}s_0-a_{10}s_1+\nu \\
	v^3 &= -\frac53a_{00}s_1+\nu_0
\end{align*}
and thus a 4-dimensional algebra of c-projective symmetries,
$$ \projalg = \langle
	\del_{s_0}\,,\ 
	\del_{s_1}\,,\ 
	x_0\del_{x_0}+x_1\del_{x_1}+s_0\del_{s_0}+s_1\del_{s_1}\,,\ 
	\del_{x_0}+\del_{x_1}-s_1\del_{s_0}	
		\rangle\,. $$

\subsubsection{Metrics L2}
For the metrics of type L2, Condition~\eqref{eqn:L.exceptional.2} becomes
$$ c_1^2d_0^2 = c_0^2d_1^2\,. $$
Solving the remaining equations, we find, analogously to the case L1,
\begin{gather*}
	a_{01}=0\,,\ a_{11}=-\frac23a_{00}\,,\\
	\beta=0\,,
\end{gather*}
and then, solving for the derivatives of $v^j$,
$$ \projalg = \langle
				\del_{s_0}\,,\
				\del_{s_1}\,,\
				\del_{x_0}+\del_{x_1}-2s_0\del_{s_0}-s_1\del_{s_1}\,,\
				\frac{1}{c_0}e^{x_0}\del_{x_0}
					+\frac{1}{c_1}e^{x_1}\del_{x_1}
					+s_1\del_{s_0}
			\rangle\,. $$
Here we have seen that, for the exceptional case of Condition~\eqref{eqn:L.exceptional.2}, the algebra of c-projective symmetries is 4-dimensional and $\beta=0$. We are going to see in Section~\ref{sec:finalising.L2} that, if $\beta=0$, the algebra is 4-dimensional even if~\eqref{eqn:L.exceptional.2} is not satisfied.

\subsubsection{Metrics L3}
Condition~\eqref{eqn:L.exceptional.2} becomes
$$ c_0^2e^{-3x_0} = c_1^2e^{-3x_1}\,, $$
which cannot be identically satisfied unless $(c_0,c_1)=(0,0)$. These values, however, are impossible as the corresponding metric of L3 type would become degenerate.
We conclude that the exceptional branch is void in the case L3.

\subsubsection{Metrics L4} 
Using that $\cos(x_0)e^{3\beta x_0}$ and $\cos(x_1)e^{3\beta x_0}$ are
linearly independent functions, Condition~\eqref{eqn:L.exceptional.2} is 
equivalent to the system
\[
	c_0^2 = c_1^2 = 0\,,
\]
which cannot be satisfied as the corresponding metric of L4 type would be degenerate.
We conclude that the exceptional branch is void in the case L4.

\subsection{Finalising the proof of Theorem~\ref{thm:Liouville.main}}\label{sec:liouville-finalise}
We return to the main branch, i.e.~we now assume
\[
	 F_1^2\rho_0'^2+\varepsilon F_0^2\rho_1'^2\ne0\,.
\]
Using Lemma~\ref{la:Liouville}, we find that the blocks $(*a)$, $(*b)$,
$(*c)$ and $(*d)$ as well as $(*B)$ and $(*C)$ become redundant.
The block $(*A)$ is diagonal and the block $(*D)$ is symmetric, and these
remaining conditions can be written as equations of the form
\[
\sum_\ell c_\ell F_\ell(x_0,x_1) = 0
\]
where $F_\ell(x_0,x_1)$ are bivariate, linearly independent functions in $x_0,x_1$. Thus $c_\ell=0$ for all $\ell$.
We solve these remaining conditions separately for each of the metrics of
Liouville type. Each of the following subsections covers one of these metrics.

\subsubsection{Case L1}

The remaining conditions~\eqref{eqn:liouville.cases.algebraic.condition} are 
rational functions in $x_0,x_1$ and thus lead to polynomial conditions, 
where each coefficient has to vanish independently.
We arrive at
$$
	\mu = -\frac{10}{3}a_{00}\,,\quad
	a_{01}=0\,,\quad
	a_{11} = -\frac23a_{00}\,,
$$
and thus, up to a constant factor,
$$
	v = \begin{pmatrix}
			-5a_{00}x_0+3a_{10} \\
			-5a_{00}x_1+3a_{10} \\
			-10a_{00}s_0-3a_{10}s_1+3\nu \\
			-5a_{00}s_1+3\nu_0
		\end{pmatrix}
$$
leading to the 4-dimensional algebra
$$
	\mathfrak{p}=\langle
					\partial_{s_0}\,,\quad
					\partial_{s_1}\,,\quad
		\partial_{x_0}+\partial_{x_1}-s_1\partial_{s_0}\,,\quad
		x_0\partial_{x_0}+x_1\partial_{x_1}+2s_0\partial_{s_0}+s_1\partial_{s_1}
				\rangle\,.
$$
We note that this algebra coincides with the one found in the exceptional
case, see Section~\ref{sec:exceptional.L1}.

\subsubsection{Case L2}\label{sec:finalising.L2}
Since $\beta\ne1$, the remaining 
conditions~\eqref{eqn:liouville.cases.algebraic.condition} are equivalent to
\begin{align*}
	a_{01}c_0^2e^{2\beta x_0} &= 0 \\
	a_{01}c_1^2e^{2\beta x_1} &= 0 \\
	a_{10}\beta e^{2\beta x_0} &= 0 \\
	a_{10}\beta e^{2\beta x_1} &= 0 \\
	((5\beta-2)a_{00}-3a_{11})c_0 &= 0 \\
	((5\beta-2)a_{00}-3a_{11})c_1 &= 0
\end{align*}
We solve this system distinguishing two branches:\smallskip

\noindent\textbf{Case $\beta\ne0$.}
We arrive at
$$
	\mu = -\frac{5}{3}(\beta+2)a_{00}\,,\
	a_{01}=0\,,\
	a_{10}=0\,,\
	a_{11} = \frac13(5\beta-2)a_{00}\,.
$$
This yields the 3-dimensional algebra
$$
	\mathfrak{p}=\langle
					\partial_{s_0}\,,\quad
					\partial_{s_1}\,,\quad
					\partial_{x_0}+\partial_{x_1}
						-(\beta+2)s_0\partial_{s_0}
						-(2\beta+1)s_1\partial_{s_1}
				\rangle\,.
$$

\noindent\textbf{Case $\beta=0$.}
In this case we arrive at
$$
	\mu = -\frac{10}{3}a_{00}\,,\
	a_{01}=0\,,\
	a_{11} = -\frac23a_{00}\,.
$$
This yields the 4-dimensional algebra
$$
	\mathfrak{p}=\langle
					\partial_{s_0}\,,\quad
					\partial_{s_1}\,,\quad
					\frac1{c_0}e^{x_0}\del_{x_0}
						+\frac1{c_1}e^{x_1}\del_{x_1}
						+s_1\del_{s_0}\,,\quad
					\partial_{x_0}+\partial_{x_1}
						-2s_0\partial_{s_0}
						-s_1\partial_{s_1}
				\rangle\,.
$$

\subsubsection{Case L3}
We obtain the conditions
$$
	\mu = -5a_{00}\,,\quad
	a_{01}=0\,,\quad
	a_{10}=\frac53a_{00}\,,\quad
	a_{11}=a_{00}\,,
$$
and arrive at the 3-dimensional algebra
$$
\mathfrak{p}=\langle
				\partial_{s_0}\,,\quad
				\partial_{s_1}\,,\quad
				\partial_{x_0}+\partial_{x_1}
					-(3s_0+s_1)\partial_{s_0}
					-3s_1\partial_{s_1}
			\rangle\,.
$$

\subsubsection{Case L4}
Solving the remaining conditions 
of~\eqref{eqn:liouville.cases.algebraic.condition},
and using that $\cos(x_0)\sin(x_0)$ and $\cos^2(x_0)$
are linearly independent functions, we find the system
\begin{align*}
	(a_{01}+a_{10})\beta-a_{00}+a_{11} &= 0 \\
	(a_{00}-a_{11})\beta+a_{01}+a_{10} &= 0 \\
	3a_{01}\beta-5a_{00} &= 0\,.
\end{align*}
We thus distinguish two cases:\smallskip

\noindent\textbf{Case $\beta\ne0$.}
We arrive at
$$
	\mu = -5a_{00}
	a_{01}=\frac53\frac{a_{00}}{\beta},
	a_{10}=-\frac53\frac{a_{00}}{\beta},
	a_{11}=a_{00}
$$
and thus the 3-dimensional algebra
\begin{equation}\label{eq:projalg.finalising.L4.beta.ne.0}
	\mathfrak{p}=\langle
					\partial_{s_0}\,,\quad
					\partial_{s_1}\,,\quad
					\partial_{x_0}+\partial_{x_1}
						-(3\beta s_0-s_1)\partial_{s_0}
						-(s_0+3\beta s_1)\partial_{s_1}
				\rangle\,.
\end{equation}

\noindent\textbf{Case $\beta=0$.}
The remaining equation of (2) can be solved for
$$ a_{00}=0\,,\quad a_{11}=0\,,\quad a_{01}=-a_{10} $$
and the remaining conditions of (3) then yield the 3-dimensional algebra
$$ \projalg = \langle
				\del_{s_0}\,,\quad
				\del_{s_1}\,,\quad
				\del_{x_0}+\del_{x_1}+s_1\del_{s_0}-s_0\del_{s_1}
			\rangle\,. $$
We observe that this algebra is analogous to~\eqref{eq:projalg.finalising.L4.beta.ne.0}, which holds for $\beta=0$.

\section{Proof of Theorem~\ref{thm:complex.main} (complex
type)}\label{sec:complex}
Theorem~\ref{thm:complex.main} is a direct corollary of Theorem~\ref{thm:Liouville.main}.
Performing the transformation
$$
	s_0^\text{new}=4s_0^\text{old}\,,\qquad
	s_1^\text{new}=4s_1^\text{old}\,,
$$
the metric~\eqref{eqn:C.metrics} becomes
\begin{align*}
	g &= -\frac14\,\Bigg(
		(\rho-\bar\rho)(F^2dz^2-\bar F^2d\barz^2)
		+\frac{1}{\rho-\bar\rho}\left[
		\left(\frac{\rho'}{F}\right)^2(ds_0+\bar\rho ds_1)^2
		-\left(\frac{\bar\rho'}{\bar F}\right)^2(ds_0+\rho ds_1)^2
		\right]
	\Bigg)
\end{align*}
Up to the irrelevant, constant conformal factor of $-\frac14$, we observe a formal analogy of the complex type metrics and the Liouville type metrics where we formally replace
$$ \veps=-1\,, \quad x_0\to z\,,\quad x_1\to\barz\text{ etc.}, $$
as well as $c_0\to\varsigma$ and $c_1\to\bar\varsigma$ (cases C1, C3, C4) or $c_0,c_1\to1$, $d_0\to\varsigma$, $d_1\to\bar\varsigma$ (case C2).
The statement then follows directly by comparison, in an entirely analogous manner.

\section{Proof of Theorem~\ref{thm:degenerate.main} (Degenerate type)}\label{sec:degenerate}

We proceed again by proving an auxiliary result first. It provides us with a partial solution for the c-projective vector fields, allowing us to lift 2-dimensional homotheties to the K\"ahler surface if certain integration conditions hold.
\begin{proposition}\label{prop:degenerate}~
	
	\noindent
	(i) Any c-projective vector field of a degenerate metric from
	Fact~\ref{fct:metrics} is of the form
	\[
		v = v^0(x_0)\partial_{x_0}+(\eta x_1+f(s_0,s_1))\partial_{x_1}+u
	\]
	where $u$ is a homothetic vector field of $h$, $\eta\in\RR$.
	
	\noindent
	(ii) Let $g$ be a metric of type D1--D3 of Fact~\ref{fct:metrics} and let
	$a_{00},a_{01},a_{10}$ and $a_{11}$ be constants such that
	\begin{equation}\label{eqn:choice.aij.Dx}
	\begin{aligned}
		\text{D1:}&\quad a_{10}=a_{11}\text{~and~}a_{01}=a_{00}
		\\
		\text{D2a:}&\quad a_{10}=a_{11}-a_{00}\text{~and~}a_{01}=0
		\\
		\text{D2b:}&\quad a_{10}=(\beta-1)a_{00}\text{~and~}a_{01}=0
		\text{~and~}a_{11}=\beta a_{00}
		\\
		\text{D3:}&\quad a_{10}=\tfrac12(a_{11}-a_{00})\text{~and~}
				a_{01}=-\tfrac12(a_{11}-a_{00})\qquad
	\end{aligned}
	\end{equation}
	hold.
	Furthermore, let $u=u^0\del_{s_0}+u^1\del_{s_1}$ be a homothetic vector field of $h$, $\lie_uh=Ch$, that satisfies the respective integrability condition in the last column of Table~\ref{tab:integrability.Dx}.
	Finally, let $v^0(x_0)$, $f(s_0,s_1)$ and $\eta\in\RR$ be as specified in the middle column of Table~\ref{tab:integrability.Dx}.
	Then
	\begin{equation}\label{eqn:v.ansatz.D.metrics}
		v = v^0(x_0)\partial_{x_0}+(\eta x_1+f(s_0,s_1))\partial_{x_1}+u
	\end{equation}
	is a c-projective vector field for the respective metric $g$ of type D1--D3 of
	Fact~\ref{fct:metrics}.
	
	\noindent
	(iii) For a metric of type D1--D3 of Fact~\ref{fct:metrics}, any c-projective vector field arises as in (ii).
\end{proposition}
Note that according to part~(ii) of the proposition, any homothetic vector field of the 2-dimensional metric $h$ can be extended to a c-projective vector field of $g$ in the cases D1, D2b and D3. However, in case of metrics $g$ of type D2a, only Killing vector fields of $h$ can be extended to c-projective vector fields of $g$.
\begin{example}
	Let $g$ be a metric of type D1--D3 of Fact~\ref{fct:metrics}.
	Then
	\[
		v = v^0(x_0)\partial_{x_0}+c\partial_{x_1}\,,\qquad c\in\RR,
	\]
	with
	\begin{align*}
		v^0 &= k\in\RR &&\text{if $g$ is of type D1 or D2a}\\
		v^0 &= 0 &&\text{if $g$ is of type D2b or D3}\,,
	\end{align*}
	are c-projective vector fields of~$g$.
	Indeed, this claim follows immediately from Proposition~\ref{prop:degenerate}, integrating the systems in Table~\ref{tab:integrability.Dx} for $u=0$, i.e.~$f=c\in\RR$, and evaluating the formulas for $v^0$ and $v^1$ in Table~\ref{tab:integrability.Dx} for $C=0$.
\end{example}
\begin{table}
	\textbf{Integrability conditions for part~(ii) of 
	Proposition~\ref{prop:degenerate}}\medskip
	
	\begin{tabular}{l|l|l}
		Case & equations for $v^0$ and $v^1$ & integrability condition
		\\
		\toprule
		D1 &
		\begin{minipage}{.4\textwidth}
			$ v^0 = Cx_0+k\qquad(k\in\RR) $\\[.2cm]
			$ \eta = C $ \\[.2cm]
			$ \diff{f}{s_0} = s_0\,\diff{u^1}{s_0} $ \\
			$ \diff{f}{s_1} =
				u^0+u^1\,s_0\tfrac{G'}{G}-Cs_0 $
		\end{minipage}
		& No condition
		\\
		\midrule
		D2a &
		\begin{minipage}{.3\textwidth}
			$ v^0 = k\qquad(k\in\RR) $\\[.2cm]
			$ \eta = 0 $\\[.2cm]
			$ \diff{f}{s_0} = s_0\diff{u^1}{s_0} $ \\
			$ \diff{f}{s_1} = u^0+\tfrac12s_0\,u^1\,\frac{G'}{G} $
		\end{minipage}
		& $C=0$
		\\
		\midrule
		D2b &
		\begin{minipage}{.4\textwidth}
			$ v^0 = -\frac{C}{\beta+2} $\\[.2cm]
			$ \eta = C $ \\[.2cm]
			$ \diff{f}{s_0} = -\frac1{\beta+2}G\,\diff{u^1}{s_0}
				e^{-(\beta+2)s_0} $ \\
			$ \diff{f}{s_1} = \frac{
					CG
					+(\beta+2)G\,u^0
					-G'\,u^1
				}{
					2(\beta+2)\,e^{(\beta+2)s_0}
				}$
		\end{minipage}
		& No condition
		\\
		\midrule
		D3 &
		\begin{minipage}{.4\textwidth}
			$ v^0 = -\tfrac{C}{9}\,c_1e^{-3x_0} $\\[.2cm]
			$ \eta = C $ \\[.2cm]
			$ \diff{f}{s_0} = -\frac13Ge^{-3s_0}\,\diff{u^1}{s_0} $ \\
			$ \diff{f}{s_1} = \frac{
					CG + 3G\,u^0 - G'\,u^1
				}{
					6\,e^{3s_0}
				}$
		\end{minipage}
		& No condition
	\end{tabular}\smallskip

	\caption{Integrability conditions for lifting homothetic vector fields of the 2-dimensional metric $h$, $\lie_uh=Ch$, to c-projective vector fields of 
	metrics of degenerate type of Fact~\ref{fct:metrics}.}
	\label{tab:integrability.Dx}
\end{table}

Proposition~\ref{prop:degenerate} is proved in the following subsection. In Section~\ref{sec:2D.component.degenerate.metrics} we discuss the homothetic algebra of the 2-dimensional metric $h$. The proof of Theorem~\ref{thm:degenerate.main} is then completed in Section~\ref{sec:degenerate-finalise}.

\subsection{Proof of Proposition~\ref{prop:degenerate}}

\subsubsection{Part~(i)}
The c-projective vector field has the coordinate form
$$ v = v^0(x_0,x_1,s_0,s_1)\del_{x_0}+v^1(x_0,x_1,s_0,s_1)\del_{x_1}+v^2(x_0,x_1,s_0,s_1)\del_{s_0}+v^3(x_0,x_1,s_0,s_1)\del_{s_1}. $$
Our aim now is to obtain necessary conditions for the vector field with components $v^0,v^1,v^2,v^3$ to be a c-projective vector field.
We begin with the first component, i.e.~$v^0$.
The $(0,0)$-component of the block $(*a)$ immediately implies the formula
\begin{equation}\label{eqn:v0.explicit}
	v^0 = -\frac{
		a_{01}\rho^2 + (a_{00}-2a_{01}-a_{11})\rho
		-a_{00}+a_{01}-a_{10}+a_{11}
	}{
		\frac{\partial\rho}{\partial x_0}
	}\,,
\end{equation}
which in particular yields $v^0=v^0(x_0)$ for metrics of type D1--D3.\medskip

Next, we observe from $(*a)$, specifically its $(1,0)$ and $(1,1)$-components, that
\[
	\frac{\partial v^3}{\partial x_0} = \frac{\partial v^3}{\partial x_1} = 0\,.
\]
This exhausts the information in $(*a)$.
Writing out the blocks $(*b)$ and $(*c)$, specifically the $(2,0)$ and the $(2,1)$ component of~\eqref{eqn:LvL}, we furthermore obtain the equations
\[
	\frac{\partial v^2}{\partial x_0} = 0\,,\quad
	\frac{\partial v^2}{\partial x_1} = 0\,.
\]
We therefore conclude that the following preliminary result holds:
\emph{The components $v^2$ and $v^3$ of the c-projective vector field depend on $s_0$ and $s_1$ only, i.e.
$$ v^2=v^2(s_0,s_1)\quad\text{and}\quad v^3=v^3(s_0,s_1)\,. $$
}

\noindent Before analysing these components further, we first consider the component $v^1$ of the c-projective vector field.
Recalling that we consider metrics of type D1--D3 in Fact~\ref{fct:metrics}, we have that $\tau=f_\tau\,ds_1$ with certain functions $f_\tau=f_\tau(s_0,s_1)$ specified in each case D1--D3.
Considering the $(0,3)$-component of~\eqref{eqn:Lvg}, in the $(*B)$ block, we conclude
$$ \diff{v^1}{x_0} = 0, $$
and next, writing out the $(1,2)$-component of~\eqref{eqn:LvL}, in block $(*b)$,
\[
	\frac{\partial v^1}{\partial s_0}
	= f_\tau(s_0,s_1)\,\frac{\partial v^3}{\partial s_0}\,.
\]
Now consider the block $(*d)$, specifically the $(2,2)$ or $(3,3)$ component of~\eqref{eqn:LvL}. We obtain the relation
\[
	a_{11}-a_{00}+a_{01}-a_{10} = 0\,.
\]
The information in~\eqref{eqn:LvL} is now almost entirely exhausted.
The last condition is drawn from block $(*b)$ of Equation~\eqref{eqn:LvL}, specifically its $(1,3)$-component. We infer
\[
	\diff{v^1}{s_1}+f_\tau\diff{v^1}{x_1} = v^2\diff{f_\tau}{s_0}
											+ v^3\diff{f_\tau}{s_1}
											+ f_\tau\diff{v^3}{s_1}\,.
\]
It remains to consider the further conditions arising from~\eqref{eqn:Lvg}. We are going to do this individually for the specific normal forms D1--D3.\smallskip

\noindent\textbf{Case D1.}
Taking the block $(*A)$, specifically the $(0,0)$ and $(1,1)$ components of~\eqref{eqn:Lvg}, we infer
\begin{equation}\label{eqn:D1.v1.x1.formula}
	a_{10} = a_{11}
	\quad\text{and}\quad
	\diff{v^1}{x_1} = a_{00} + a_{11}\,.
\end{equation}

\noindent\textbf{Case D2a.}
Analogously, from the block $(*A)$, we infer
\[
	a_{10} = -a_{00}+a_{11}
	\quad\text{and}\quad
	\diff{v^1}{x_1} = a_{00} + \frac12a_{11}.
\]

\noindent\textbf{Case D2b.}
Analogously, from the block $(*A)$, we infer
$$ a_{10}=(\beta-1)a_{00}\quad\text{and}\quad a_{11}=a_{00}\beta $$
as well as
$$ \diff{v^1}{x_1} = a_{00}(\beta+2). $$

\noindent\textbf{Case D3.}
Analogously, from the block $(*A)$, we infer
$$ a_{10}=\frac12(a_{11}-a_{00})
	\quad\text{and}\quad
	\diff{v^1}{x_1} = \frac32\,a_{00}. $$

\noindent We thus conclude:
\begin{lemma}\label{la:v1.component.degenerate}
	The component $v^1$ is a function $v^1=v^1(x_1,s_0,s_1)$ with $\diff{v^1}{x_1}=:\eta=\text{constant}$.
	Thus
	$$ v^1=\eta x_1+f(s_0,s_1). $$
\end{lemma}

\noindent It remains to show that $u=v^2(s_0,s_1)\del_{s_0}+v^3(s_0,s_1)\del_{s_1}$ satisfies the conditions of a local homothetic vector field of the metric
\begin{equation}\label{eqn:h.f0.f1}
	h=f_0(s_0,s_1)\,ds_0^2+f_1(s_0,s_1)\,ds_1^2\,,
\end{equation}
where
\begin{align*}
	f_0(s_0,s_1)&=G(s_1)\,,\quad f_1(s_0,s_1)=\frac1{G(s_1)} &\text{in the cases D1 and D2a}
	\\
	f_0(s_0,s_1)&=f_1(s_0,s_1)=e^{Cs_0}G(s_1) &\text{in the cases D2b and D3}
\end{align*}
To this end, consider the off-diagonal part of $(*D)$, i.e., the $(3,2)$-component of~\eqref{eqn:Lvg}.
It implies the condition
\begin{equation}\label{eqn:v2.v3.mixed}
	f_0(s_0,s_1)\frac{\partial v^2}{\partial s_1}
	+ f_1(s_0,s_1)\frac{\partial v^3}{\partial s_0} = 0\,.
\end{equation}
This is indeed consistent with the equations of a homothetic vector field $u=u^0\del_{s_0}+u^1\del_{s_1}$ of~$h$.
To prove full equivalence, we proceed again on a case by case basis.

\noindent\textbf{Case D1}
Resubstituting~\eqref{eqn:D1.v1.x1.formula} into the block $(*D)$, we find
\begin{subequations}
	\begin{align}
		\left(a_{00}+a_{11}-2\diff{v^2}{s_0}\right)G-v^3\diff{G}{s_1} &= 0
		\label{D1.(D).1} \\
		(a_{00}+a_{11})G^2
		+v^3\diff{G}{s_1}
		-2G\diff{v^3}{s_1}
		&=0\,.
		\label{D1.(D).2}
	\end{align}
\end{subequations}
Solve~\eqref{D1.(D).1} for $\diff{v^2}{s_0}$,
\begin{equation}\label{eqn:D1.v2.s0}
	\diff{v^2}{s_0} = \frac12\,\left(
			a_{00}+a_{11}-v^3\frac1G \diff{G}{s_1}
		\right)\,.
\end{equation}
and~\eqref{D1.(D).2} for $\diff{v^3}{s_1}$,
\begin{equation}\label{eqn:D1.v3.s1}
	\diff{v^3}{s_1}
	= \frac12\left(
			a_{00}+a_{11}+v^3\frac1G\diff{G}{s_1}
		\right)\,.
\end{equation}
In addition, \eqref{eqn:v2.v3.mixed} becomes
$$ G^2\diff{v^2}{s_1}+\diff{v^3}{s_0} = 0\,. $$
These coincide with the conditions that $u=u^0\del_{s_0}+u^1\del_{s_1}$ is a homothetic vector field of $h$, $\lie_uh=Ch$,
\begin{align*}
	\diff{u^0}{s_0} &= \frac12\,\left( C-u^1\frac1G \diff{G}{s_1} \right)
	\\
	G^2\diff{u^0}{s_1}+\diff{u^1}{s_0} &= 0
	\\
	\diff{u^1}{s_1} &= \frac12\,\left( C+u^1\frac1G\diff{G}{s_1} \right)
\end{align*}
after identifying $u^0=v^2, u^1=v^3$ and $C=a_{00}+a_{11}$.

It remains to consider the block $(*B)$, which is identical to $(*C)$, and
automatically satisfied because of $a_{10} = a_{11}$.
We summarise, with $\eta:=a_{00}+a_{11}$
\begin{align*}
	v^0 &= \eta x_0-a_{00} \\
	v^1 &= \eta x_1+f(s_0,s_1)
\end{align*}
where
\begin{align*}
	\diff{f}{s_0} &= s_0\diff{v^3}{s_0} \\
	\diff{f}{s_1} &= s_0\diff{v^3}{s_1}-\eta s_0+v^2\,.
\end{align*}
\smallskip

\noindent\textbf{Case D2a}
Analogously to case D1, we obtain
\begin{align*}
	\left(2a_{00}+a_{11}-2\diff{v^2}{s_0}\right)G-v^3\diff{G}{s_1} &= 0
	\\
	\left(2a_{00}+a_{11}-2\diff{v^3}{s_1}\right)G
	+v^3\diff{G}{s_1} &= 0\,.
\end{align*}
and then, solving for $\diff{v^2}{s_0}$ and $\diff{v^3}{s_1}$,
\begin{align*}
	\diff{v^2}{s_0} &= \frac12\left(
		2a_{00}+a_{11}-v^3\frac1G \diff{G}{s_1}
	\right)
	\\
	\diff{v^3}{s_1}
	&= \frac12\left( 2a_{00}+a_{11}+v^3\frac1G \diff{G}{s_1} \right)\,.
\end{align*}
Hence $u=v^2\del_{s_0}+v^3\del_{s_1}$ satisfies the conditions of a homothetic vector field of $h$, $\lie_uh=Ch$, with $C=2a_{00}+a_{11}$.
We now verify that the blocks $(*B)$ and $(*C)$ are redundant and do not yield new conditions.
We summarise, with $\eta=a_{00}+\frac12a_{11}$
\[
	v^0 = \frac13(a_{11}-a_{00})
	\quad\text{and}\quad
	v^1 = \eta x_1+f(s_0,s_1)
\]
where
\begin{align*}
	\diff{f}{s_0} &= s_0\diff{v^3}{s_0} \\
	\diff{f}{s_1} &= s_0\diff{v^3}{s_1}-\eta s_0+v^2.
\end{align*}
\smallskip

\noindent\textbf{Case D2b}
Applying a similar reasoning, we arrive at 
\begin{align*}
	\diff{v^2}{s_0} &= \frac12\left(
			(\beta+2)(a_{00}+v^2)-v^3\frac1G\diff{G}{s_1}
		\right)
	\\
	\diff{v^2}{s_1}+\diff{v^3}{s_0} &= 0
	\\
	\diff{v^3}{s_1} &= \frac12\left(
			(\beta+2)(a_{00}+v^2)-v^3\frac1G\diff{G}{s_1}
		\right)
\end{align*}
implying that $u=v^2\del_{s_0}+v^3\del_{s_1}$ satisfies the conditions of a homothetic vector field of $h$, $\lie_uh=Ch$, identifying $C=a_{00}(\beta+2)$.
We then find
\begin{align*}
	v^0 &= -a_{00} \\
	v^1 &= a_{00}(\beta+2)x_1+f(s_0,s_1)
\end{align*}
with
\begin{align*}
	\diff{f}{s_0} &= s_0\diff{v^3}{s_0} \\
	\diff{f}{s_1} &= s_0\diff{v^3}{s_1}-a_{00}(\beta+2)s_0+v^2,
\end{align*}
completing the proof.
\smallskip

\noindent\textbf{Case D3}
Analogously, we arrive at
\begin{align*}
	\diff{v^2}{s_0} &= \frac12\left(
			\frac32(a_{00}+a_{11})+3v^2-v^3\frac1G\diff{G}{s_1}
		\right)
	\\
	\diff{v^3}{s_0}+\diff{v^2}{s_1} &= 0
	\\
	\diff{v^3}{s_1} &= \frac12\left(
			\frac32(a_{00}+a_{11})+3v^2-v^3\frac1G\diff{G}{s_1}
		\right)
\end{align*}
implying that $u=v^2\del_{s_0}+v^3\del_{s_1}$ satisfies the conditions of a homothetic vector field of $h$, $\lie_uh=Ch$, identifying $C=\frac32(a_{00}+a_{11})$.
We then find
\begin{align*}
	v^0 &= -\frac16(a_{00}-a_{11})c_1e^{-3x_0} \\
	v^1 &= \frac32a_{00}x_1+f(s_0,s_1)
\end{align*}
with
\begin{align*}
	\diff{f}{s_0} &= -\frac13Ge^{-3s_0}\diff{v^3}{s_0} \\
	\diff{f}{s_1} &=\frac1{12}\,\left(
	3(a_{00}-a_{11}+4v^2)G-2v^3\diff{G}{s_1}
	\right)e^{-3s_0},
\end{align*}
completing the proof.
\medskip

Together with Equation~\eqref{eqn:v0.explicit} and Lemma~\ref{la:v1.component.degenerate}, the claim of part~(i) follows.

\subsubsection{Part~(ii)}
Part (i) provides necessary conditions that the components of a c-projective vector field have to satisfy.
We therefore have that $v$ has to have the form
\[
	v = v^0(x_0)\partial_{x_0}+(\eta x_1+f(s_0,s_1))\partial_{x_1}+u
\]
with $v^0$ given by~\eqref{eqn:v0.explicit}, where moreover $u=u^0(s_0,s_1)\del_{s_0}+u^1(s_0,s_1)\del_{s_1}$
is a homothetic vector field of the metric $h$ (cf.~\eqref{eqn:h.f0.f1}).
We then choose $a_{00}, a_{01}, a_{10}$ and $a_{11}$ such that~\eqref{eqn:choice.aij.Dx} is satisfied.
Comparing to the conditions of a c-projective vector field, i.e.~\eqref{eqn:LvL} and~\eqref{eqn:Lvg}, determined as in part~(i), we thus find
$$ \eta=C\,, $$
and, moreover, that the vector field $v$ is indeed a c-projective vector field for a metric $g$ of a type D1--D3, if
\begin{subequations}\label{eqn:prolongation.f}
\begin{align}
	\label{eqn:prolongation.f.1}
	\diff{f}{s_0} &= f_\tau(s_0,s_1)\diff{u^1}{s_0}
	\\
	\label{eqn:prolongation.f.2}
	\diff{f}{s_1} &= u^0(s_0,s_1)\diff{f_\tau}{s_0}+u^1(s_0,s_1)+f_\tau(s_0,s_1)-Cf_\tau(s_0,s_1)\,,
\end{align}
\end{subequations}
for which the integrability condition
$$ \frac{\del^2f}{\del s_0\del s_1}=\frac{\del^2f}{\del s_1\del s_0} $$
has to be satisfied.
Equations~\eqref{eqn:prolongation.f} are obtained analogously to the computation in (i); note that the expressions on the right hand sides of~\eqref{eqn:prolongation.f} depend on $s_0,s_1$, the homothetic vector field $u$ of $h$, and the type D1--D3 of the metric $g$ only.
The integrability condition for $f$ is satisfied in the cases D1, D2b and D3. Yet, the integrability condition is not automatically satisfied in the case D2a; in this case, we can integrate for $f$ if and only if
$$ C=0, $$
i.e.~if $u$ is Killing for $h$, $\lie_uh=0$.
As a result, we obtain the formulas and conditions of Table~\ref{tab:integrability.Dx}. In particular, the integrability condition for $f$ is specified in the second column of Table~\ref{tab:integrability.Dx}.
This establishes~\eqref{eqn:LvL} and~\eqref{eqn:Lvg} for the ansatz~\eqref{eqn:v.ansatz.D.metrics} for $v$. It can be shown that this implies that $v$ is a c-projective vector field. We conclude the proof of part~(ii) of Proposition~\ref{prop:degenerate} by explicitly confirming that $v$ is c-projective, proceeding case by case using Lie symmetry methods analogous to those outlined in Appendix~\ref{app:fubini-study}.

\subsubsection{Part~(iii)}
The third part of the claim is easily verified by a careful inspection of the proofs of parts~(i) and~(ii). Indeed, in the considerations for part~(i) we have exhausted all conditions for a c-projective vector field of a metric $g$ among the types D1--D3.
In part (ii) we have seen that given a proper choice of a homothetic vector field of $h$ together with suitable constants $a_{00}$ to $a_{11}$, the ansatz suggested by~(i) yields a c-projective vector field of $g$. Exhausting the possibilities for $u$ and $a_{ij}$, we therefore obtain all c-projective vector fields of a given $g$ of type D1--D3.

\subsection{Homothetic symmetries for the 2-dimensional component}
\label{sec:2D.component.degenerate.metrics}
Proposition~\ref{prop:degenerate} requires homothetic vector fields of 2-dimensional metrics.
These are then extended to the c-projective vector fields of $g$ we are seeking.
In the current section we are therefore going to investigate the homothetic algebras of certain real surfaces.
According to Fact~\ref{fct:metrics}, we need to consider the metrics
\begin{equation}\label{eqn:metrics.f1f}
	G(s_1)ds_0^2+\frac{1}{G(s_1)}ds_1^2
\end{equation}
and
\begin{equation}\label{eqn:metrics.f}
	e^{Cs_0}G(s_1)(ds_0^2+ds_1^2)\,.
\end{equation}
Without loss of generality, we may suppose $G>0$ for our purposes here, as we work locally and a constant conformal factor is irrelevant for the question under investigation. In particular, the homothetic algebra of the 2-dimensional metric $h$ remains unaffected by a change of the sign of $G$.
Note, however, that the sign of $G$ becomes relevant when computing the actual c-projective vector fields using Proposition~\ref{prop:degenerate}, c.f.~also Table~\ref{tab:integrability.Dx}; specifically, the sign of the function $f$ depends on the sign of $G$ in the cases D2b and D3.
For the sake of legibility we are going to suppress some absolute values in some real-valued formulas that would otherwise be ill-defined, particularly in Lemmas~\ref{la:non-generic.algebra.f1f} and~\ref{la:D2b.D3.2D-hom-algebra}, leaving them to the discretion of the reader. Due comments will be made where necessary.

Metrics \eqref{eqn:metrics.f1f} and \eqref{eqn:metrics.f} admit the
obvious homothetic vector field $\partial_{s_0}$, which is Killing for
\eqref{eqn:metrics.f1f}. For \eqref{eqn:metrics.f} $\del_{s_0}$ is Killing if
$C=0$ and properly homothetic otherwise.
Our goal is to identify those metrics of type \eqref{eqn:metrics.f1f}
or~\eqref{eqn:metrics.f} that admit at least one homothetic vector field in
addition to~$\partial_{s_0}$.
We recall that, if a 2-dimensional metric has a 2-dimensional Killing
algebra, it is already of constant curvature (this follows
from~\cite{BMM2008}).

\subsubsection{Metrics \eqref{eqn:metrics.f1f}}
We begin by identifying metrics~\eqref{eqn:metrics.f1f} of 
constant curvature.
The following lemma is obtained by a direct, if cumbersome, computation.
\begin{lemma}\label{la:metric.f1f.constant}
	A metric of type \eqref{eqn:metrics.f1f} has constant curvature if and
	only if
	$$
	\frac{\partial^3G}{\partial s_1^3}=0\,.
	$$
	
	\noindent(i)
	If $G(s_1)=\kappa s_1^2+\mu_1s_1+\mu_2$ with $\kappa\ne0$ then
	\eqref{eqn:metrics.f1f} has non-zero constant curvature.
	We distinguish three subcases according to the sign of the discriminant
	$$ \Delta = \mu_1^2-4\kappa\mu_2 $$
	of $G$ with respect to $s_1$. Thus, a homothetic vector field\footnote{Note that in this particular case, the homothetic algebra coincides with the Killing algebra.} of $h$ is given by
	\begin{align*}
		\Delta&=0 &
		u &= \left(
				\xi_0 +\xi_1\,\left(
					-\kappa s_0^2+4\tfrac{\kappa}{(\diff{G}{s_1})^2}
				\right)
		+\xi_2\,(-2s_0\kappa)
		\right)\del_{s_0}
		+\left(
			\xi_1 s_0\diff{G}{s_1}
			+\xi_2\diff{G}{s_1}
		\right)\del_{s_1}
		\\
		\Delta&>0 &
		u &= \bigg(
		\xi_0 +\xi_1\,\frac{\diff{G}{s_1}}{\sqrt{G}\sqrt{\Delta}}
		\cos(\tfrac12\sqrt{\Delta}s_0)
		-\xi_2\frac{\diff{G}{s_1}}{\sqrt{G}\sqrt{\Delta}}
		\sin(\tfrac12\sqrt{\Delta}s_0)
		\bigg)\del_{s_0}
		\\
		&&&\quad
		+\bigg(
		\xi_1\sin(\tfrac12\sqrt{\Delta}s_0)\sqrt{G}
		+\xi_2\cos(\tfrac12\sqrt{\Delta}s_0)\sqrt{G}
		\bigg)\del_{s_1}
		\\
		\Delta&<0 &
		u &= \bigg(
		\xi_0-\xi_1\,\frac{\diff{G}{s_1}}{\sqrt{G}\sqrt{-\Delta}}
		\exp(\tfrac12\sqrt{-\Delta}s_0)
		+\xi_2\,\frac{\diff{G}{s_1}}{\sqrt{G}\sqrt{-\Delta}}
		\exp(-\tfrac12\sqrt{-\Delta}s_0)
		\bigg)\del_{s_0}
		\\
		&&&\quad
		+\bigg(
		\xi_1\exp(\tfrac12\sqrt{-\Delta}s_0)\sqrt{G}
		+\xi_2\exp(-\tfrac12\sqrt{-\Delta}s_0)\sqrt{G}
		\bigg)\del_{s_1}
	\end{align*}
	where $\xi_i\in\RR$ are real-valued parameters.

	\noindent(ii)
	If $G(s_1)=\mu_1s_1+\mu_2$ with $\mu_1\ne0$, then the metric is flat. Its homothetic algebra
	is 4-dimensional and generated by the properly homothetic vector field
	$$ (\mu_1s_1+\mu_2)\partial_{s_1} $$	
	and the three Killing vector fields
	\begin{align*}
		\del_{s_0}\,,\qquad
		&\frac{\sin(\tfrac12\mu_1s_0)}{\sqrt{\mu_1s_1+\mu_2}}\,\,\del_{s_0}
		-\cos(\tfrac12\mu_1s_0)\sqrt{\mu_1s_1+\mu_2}\,\,\del_{s_1}\,,
		\\
		&\frac{\cos(\tfrac12\mu_1s_0)}{\sqrt{\mu_1s_1+\mu_2}}\,\,\del_{s_0}
		+\sin(\tfrac12\mu_1s_0)\sqrt{\mu_1s_1+\mu_2}\,\,\del_{s_1}\,.
	\end{align*}
	
	\noindent(iii)
	If $G(s_1)=\mu_2\ne0$, then the metric is flat. Its homothetic algebra is 4-dimensional and generated by the properly homothetic vector field
	$$ s_0\,\del_{s_0}+s_1\,\del_{s_1} $$
	and the three Killing vector fields
	\[
		\del_{s_0}\,,\quad
		\del_{s_1}\,,\quad\text{and}\quad
		\frac{s_1}{\mu_2}\,\del_{s_0}-\mu_2s_0\del_{s_1}\,.
	\]
\end{lemma}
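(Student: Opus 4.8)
The plan is to prove the two claims in turn: first the constant-curvature criterion, then the explicit parametrisation of the homotheties, integrating the homothety equations case by case on the discriminant.

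First I would compute the Gau{\ss} curvature of $h=G(s_1)\,ds_0^2+G(s_1)^{-1}ds_1^2$ directly. Since the metric is diagonal with $\sqrt{\det h}=1$ and its coefficients depend only on $s_1$, the standard curvature formula for an orthogonal metric collapses to $K=-\tfrac12\,\diff{}{s_1}\!\big(\diff{G}{s_1}\big)=-\tfrac12 G''(s_1)$. Hence $K$ is constant precisely when $G''$ is constant, i.e.\ $\tfrac{\del^3 G}{\del s_1^3}=0$, which forces $G$ to be quadratic, $G=\kappa s_1^2+\mu_1 s_1+\mu_2$; then $K=-\kappa$, non-zero exactly when $\kappa\neq0$. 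This settles the first sentence and the non-zero/flat dichotomy between parts (i) and (ii).

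Next, for the homotheties I would write $u=u^0\del_{s_0}+u^1\del_{s_1}$ and expand $\lie_u h=Ch$ into its three independent components: the off-diagonal equation $G\,\diff{u^0}{s_1}+G^{-1}\diff{u^1}{s_0}=0$, together with $\diff{u^0}{s_0}=\tfrac{C}{2}-\tfrac{G'}{2G}u^1$ and $\diff{u^1}{s_1}=\tfrac{C}{2}+\tfrac{G'}{2G}u^1$. In case (i) the curvature is a non-zero constant, so I would first rule out proper homotheties: the flow of a proper homothety rescales $h$ and therefore rescales the (constant, non-zero) curvature, contradicting its constancy; thus $C=0$ and every homothetic field is Killing. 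Integrating the $s_1$-equation with $C=0$ gives $u^1=\sqrt{G}\,\phi(s_0)$, and the $s_0$-equation then yields $u^0=-\tfrac{G'}{2\sqrt G}\,\Phi(s_0)+\psi(s_1)$ with $\Phi'=\phi$. Substituting these into the off-diagonal equation and using the identity $(G')^2=4\kappa G+\Delta$, with $\Delta=\mu_1^2-4\kappa\mu_2$, I would reduce the whole system to the decoupled pair $\Phi''+\tfrac{\Delta}{4}\Phi=\lambda$ and $\psi'(s_1)=-\lambda\,G^{-3/2}$ for a separation constant $\lambda$. This linear constant-coefficient ODE for $\Phi$ is exactly what produces the three announced subcases: trigonometric solutions of frequency $\tfrac12\sqrt{\Delta}$ for $\Delta>0$, real exponentials for $\Delta<0$, and the polynomials $1,s_0,s_0^2$ for $\Delta=0$. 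For $\Delta\neq0$ I would invoke $\int G^{-3/2}ds_1=-2G'/(\Delta\sqrt G)$ to show that the two occurrences of $\lambda$ in $u^0$ cancel identically, so $\lambda$ is redundant and exactly three parameters $\xi_0,\xi_1,\xi_2$ survive, reproducing the stated formulas after rescaling; for $\Delta=0$ one has $G'=2\sqrt\kappa\,\sqrt G$, so $G'/(2\sqrt G)=\sqrt\kappa$ is constant and the same count gives three parameters expressed through $G'$.

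Finally, for part (ii) I would set $\kappa=0$, so $G=\mu_1 s_1+\mu_2$ is linear and $h$ is flat, now permitting proper homotheties. The Killing part ($C=0$) is the $\Delta=\mu_1^2>0$ branch, giving $\del_{s_0}$ and the two trigonometric fields of frequency $\tfrac12\mu_1$; to obtain the fourth generator I would simply verify by substitution that $u=(\mu_1 s_1+\mu_2)\del_{s_1}$ satisfies $\lie_u h=\mu_1 h$, i.e.\ is a proper homothety when $\mu_1\neq0$, completing the four-dimensional algebra. The main obstacle I expect is the bookkeeping in the third step—separating variables cleanly, recognising the redundant constant $\lambda$, and matching the surviving parameters to the differently normalised $\xi_i$ in each discriminant branch; the curvature computation and the direct check in part (ii) are routine by comparison.
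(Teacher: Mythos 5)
Your proof is correct and is, in essence, the ``straightforward computation'' that the paper invokes without writing out (the paper offers no further proof of this lemma); the key identities you rely on --- $K=-\tfrac12 G''$, $(G')^2=4\kappa G+\Delta$, $\int G^{-3/2}\,ds_1=-2G'/(\Delta\sqrt G)$, and the consequent cancellation of the separation constant $\lambda$ when $\Delta\ne0$ --- all check out and reproduce the stated generators in each discriminant branch, with the curvature-scaling argument forcing $C=0$ in part (i) being a clean way to reduce to the Killing system. The only step worth making explicit in part (ii) is why the four exhibited fields exhaust the homothetic algebra: the assignment $u\mapsto C$ is a Lie-algebra homomorphism into $\RR$ whose kernel is the Killing algebra, so the three-dimensional Killing algebra you integrate, together with the single proper homothety $(\mu_1 s_1+\mu_2)\del_{s_1}$, spans all homothetic vector fields.
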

Next, we turn to metrics of non-constant Gau{\ss} curvature, aiming to find
those metrics~\eqref{eqn:metrics.f1f} that admit additional homothetic vector
fields, particularly such that are not Killing. Indeed, the existence of two
linearly independent Killing vector fields would already imply that the
metric is of constant curvature \cite{BMM2008}.
The question to be considered is for which $G=G(s_1)$ the
metric~\eqref{eqn:metrics.f1f} admits a vector field $u$ such that
\begin{equation}\label{eqn:homothetic.1}
	\mathcal{L}_u\left(Gds_0^2\pm\frac{1}{G}ds_1^2\right)
	=Gds_0^2\pm\frac{1}{G}ds_1^2\,.
\end{equation}
The following lemma provides an answer. As announced earlier, we are going to omit absolute values for the sake of conciseness and legibility. Indeed, the correct formula in Lemma~\ref{la:non-generic.algebra.f1f} would read
$$ G=k_3\,|k_1s_1+k_2|^{\frac{2(k_1+1)}{k_1}}\,. $$
As the formula would be ill-defined for $k_1s_1+k_2<0$, we may omit these absolute values without any risk of confusion. Note that the homothetic algebra remains unaffected and no solutions are lost as $k_3\in\RR\setminus\{0\}$ (for $k_3=0$ the metric would be ill-defined).
\begin{lemma}\label{la:non-generic.algebra.f1f}
	Let $G(s_1)ds_0^2\pm\frac{1}{G(s_1)}ds_1^2$ be a metric of non-constant curvature
	admitting a homothetic vector field $u$ non-proportional to 
	$\partial_{s_0}$.
	Then
	$$
		G=k_3\,(k_1s_1+k_2)^{\frac{2(k_1+1)}{k_1}}
	$$
	and $u$ is proportional to
	$$
	(k_1+2)s_0\partial_{s_0} - (k_1s_1+k_2)\partial_{s_1}
	$$
	where $k_i\in\mathbb{R}$, $k_1,k_3\ne0$, and $k_1\neq -1$, $k_1\neq -2$ (otherwise the metric is of constant curvature).
\end{lemma}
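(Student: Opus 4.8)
The plan is to write out the homothety equation~\eqref{eqn:homothetic.1} explicitly for a vector field $u=(u^0,u^1)$ with $u^i=u^i(s_0,s_1)$, and to solve the resulting overdetermined PDE system. Writing $h=G\,ds_0^2\pm G^{-1}ds_1^2$, the condition $\lie_uh=h$ unfolds into three scalar equations, coming from the $ds_0^2$, $ds_1^2$ and mixed $ds_0\,ds_1$ components. I would first record these three equations: the off-diagonal ($ds_0\,ds_1$) equation couples $\del_{s_1}u^0$ and $\del_{s_0}u^1$, while the two diagonal equations relate $\del_{s_0}u^0$ and $\del_{s_1}u^1$ to $u^1 G'/G$ (through the derivative of the conformal factor) and to the constant homothety factor, here normalised to $1$.

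\textbf{Reducing to an ODE for $G$.}
The key idea is that $\partial_{s_0}$ is always a (Killing) symmetry, and we are looking for a \emph{second}, non-proportional homothety. First I would use the $s_0$-translation invariance of $h$ to separate variables, which suggests looking for $u$ in a form where the $s_0$-dependence is controlled. Differentiating the diagonal equations and eliminating $u^0,u^1$ should produce a single ODE purely in $G(s_1)$. Because the metric is assumed of \emph{non-constant} curvature, Lemma~\ref{la:metric.f1f.constant} tells me $G'''\ne0$, so I must not land in that degenerate locus; the non-constancy is exactly what forces the logarithmic-derivative structure $G'/G$ to satisfy a first-order relation rather than a polynomial one. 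Solving that ODE yields $G'/G = \frac{2(k_1+1)}{k_1}\cdot\frac{k_1}{k_1s_1+k_2}$, i.e.\ $G = k_3(k_1s_1+k_2)^{2(k_1+1)/k_1}$, which is the claimed form. The exponent being an integer or the special values $k_1=-1,-2$ would collapse $G$ to an affine or constant function, giving constant curvature, so those are excluded.

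\textbf{Recovering the vector field and checking it closes.}
Having pinned down $G$, I would substitute back to integrate for $u^0$ and $u^1$. The off-diagonal equation forces $u^0$ to be (up to the $\partial_{s_0}$-freedom) linear in $s_0$ and $u^1$ independent of $s_0$; the diagonal equations then fix the coefficients, producing $u\propto (k_1+2)s_0\,\partial_{s_0}-(k_1s_1+k_2)\,\partial_{s_1}$. I would verify directly that this $u$ indeed satisfies $\lie_uh=h$ for the computed $G$, closing the argument. The expected main obstacle is the elimination step: the diagonal and off-diagonal equations are coupled through mixed partials, and one must differentiate and cross-substitute carefully (using $G\ne0$ throughout) to isolate a \emph{single} closed ODE in $G$ without spuriously assuming $G'''=0$. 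Keeping track of the $\pm$ sign (the two signatures) is a minor but necessary bookkeeping point; I expect it to drop out of the equation for $G$ itself, affecting only the signature of $h$ and not the functional form of $G$ or $u$.
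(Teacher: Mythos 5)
Your proposal is correct and follows essentially the same route as the paper's proof: write $\mathcal{L}_u h = h$ as a three-component PDE system, eliminate the components of $u$ by cross-differentiation (equality of mixed partials), resolve the resulting dichotomy $F'(s_0)\,G'''(s_1)=0$ using the non-constant-curvature hypothesis via Lemma~\ref{la:metric.f1f.constant}, and then integrate to obtain the power-law $G$ and the explicit generator. The only minor inaccuracy is one of ordering: the elimination does not produce an ODE purely in $G$ right away — it first forces the separated form $u=u^1(s_0)\partial_{s_0}+u^2(s_1)\partial_{s_1}$, and only after substituting this back into the system does one obtain the first-order relation for $G'/G$ (constant over a linear function of $s_1$) that integrates to the claimed power law.
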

\begin{proof}
	Let $u=u^0\partial_{s_0}+u^1\partial_{s_1}$.
	Equation~\eqref{eqn:homothetic.1} thus yields the system
	\begin{equation}\label{eqn:system.hom.1}
		\left\{
		\begin{array}{l}
			-2G+u^1G_{s_1}+2Gu^0_{s_0}=0
			\\[.3em]
			G^2u^0_{s_1}\pm u^1_{s_0}=0
			\\[.3em]
			2Gu^1_{s_1}-u^1G_{s_1}-2G=0
		\end{array}
		\right.
	\end{equation}
	Solving the third equation of~\eqref{eqn:system.hom.1},
	\begin{equation}\label{eqn:X2}
		u^1=\sqrt{G}\left( \int\frac{ds_1}{\sqrt{G}}+F(s_0) \right)
	\end{equation}
	where $F=F(s_0)$ is some smooth univariate function.
	Substituting~\eqref{eqn:X2} into~\eqref{eqn:system.hom.1}, we obtain
	\begin{equation}\label{eqn:system.hom.2}
		\left\{
		\begin{array}{l}
			\pm G^2u^0_{s_1}+\sqrt{G}F_{s_0}=0
			\\
			\sqrt{G}G_{s_1}F + \sqrt{G}G_{s_1} \int\frac{ds_1}{\sqrt{G}} +
			2Gu^0_{s_0}-2G=0
		\end{array}
		\right.
	\end{equation}
	Since $G\neq 0$, we can solve this for the derivatives $u^0_{s_1}$ and $u^0_{s_0}$,
	respectively. Using the symmetry of second derivatives,
	$\tfrac{\del^2 u^0}{\del s_1\del s_0}
	-\frac{\del^2 u^0}{\del s_0\del s_1}=0$,
	we arrive at
	\begin{equation*} 
		F(2GG_{s_1s_1}-G_{s_1}^2)
		= \pm 4F_{s_0s_0} - 2\sqrt{G}G_{s_1}
		-2G\int\frac{ds_1}{\sqrt{G}}G_{s_1s_1}
		+\int\frac{ds_1}{\sqrt{G}}G_{s_1}^2
	\end{equation*}
	In this equation, the right-hand side is a sum of a function
	depending only on $s_0$ and a function depending only on $s_1$. Therefore,
	differentiating the left-hand side with respect to $s_0$ and then with
	respect	to $s_1$ yields zero, and we obtain
	$$
	\frac{\partial^2}{\del s_0\del s_1}
		\left(
			F(s_0)\,\left(
					2G\frac{\del^2G}{\del s_1^2}
					-\left(\diff{G}{s_1}\right)^2
				\right)
		\right)=0\,,
	$$
	implying
	\begin{equation*} 
		2G\diff{F}{s_0}\frac{\del^3G}{\del s_1^3}=0\,,
	\end{equation*}
	and thus either $G'''(s_1)=0$ (meaning the metric is of 
	constant curvature due to Lemma~\ref{la:metric.f1f.constant}) or 
	$F'(s_0)=0$.
	Note that, since $G\neq 0$, combining the first equation
	of~\eqref{eqn:system.hom.2} and the second of \eqref{eqn:system.hom.1}
	yields
	$
	u^0_{s_1}=0\,\,
	\Leftrightarrow u^1_{s_0}=0 \,\,
	\Leftrightarrow \,\,F_{s_0}=0\,.
	$
	We have therefore found: If $u$ is a proper homothetic vector field of
	the metric $Gds_0^2\pm\frac{1}{G}ds_1^2$ of non-constant curvature, then
	\begin{equation}\label{eqn:hom.vec.field.usual}
		u=u^0(s_0)\partial_{s_0}+u^1(s_1)\partial_{s_1}\,.
	\end{equation}
	The claim of the lemma is then proven by substituting
	\eqref{eqn:hom.vec.field.usual} into~\eqref{eqn:system.hom.1} and then
	integrating.
\end{proof}

The homothetic algebra in this latter case is 2-dimensional and generated by
\[
	\partial_{s_0}\quad\text{and}\quad
	(k_1+2)s_0\del_{s_0}-(k_1s_1+k_2)\del_{s_1}\,.
\]

\subsubsection{Metrics \eqref{eqn:metrics.f}}
We analyse the metrics~\eqref{eqn:metrics.f} in an analogous manner to the metrics~\eqref{eqn:metrics.f1f}. The following lemma (which is analogous to Lemma~\ref{la:metric.f1f.constant}) is obtained by a direct computation.
\begin{lemma}\label{la:metric.f.constant}~
	
	\noindent(i)
	The metric~\eqref{eqn:metrics.f},
	$$ e^{Cs_0}G(s_1)(ds_0^2+ds_1^2) $$
	with $C\ne0$, has constant curvature if and only
	if
	$$ G(s_1)=k_1e^{k_2s_1} $$
	for constants $k_1,k_2\in\RR$, $k_1\ne0$.
	The constant curvature is then identical to zero and the homothetic
	algebra is 4-dimensional and generated by $\del_{s_0}$ and $\del_{s_1}$
	as well as
	\begin{align*}
		\exp\left(
			-\tfrac{1}{2}Cs_0-\tfrac12 k_2s_1
		\right)
		&\left[
			\sin\left(
				\tfrac12 k_2s_0-\tfrac{1}{2}Cs_1
			\right)\,\del_{s_0}
			-\cos\left(
				\tfrac12 k_2s_0-\tfrac{1}{2}Cs_1
			\right)\,\del_{s_1}
		\right]
		\intertext{and}
		\exp\left(
			-\tfrac{1}{2}Cs_0-\tfrac12 k_2s_1
		\right)
		&\left[
			\cos\left(
				\tfrac12 k_2s_0-\tfrac{1}{2}Cs_1
			\right)\,\del_{s_0}
			+\sin\left(
				\tfrac12 k_2s_0-\tfrac{1}{2}Cs_1
			\right)\,\del_{s_1}
		\right]\,.
	\end{align*}

	\noindent(ii)
	The metric~\eqref{eqn:metrics.f} with $C=0$,
	$$ G(s_1)(ds_0^2+ds_1^2) $$
	has constant curvature if and only if
	$$ G(s_1)=\frac{k_3}{\cos^2\left(k_1s_1+k_2\right)} $$
	for constants $k_1,k_2,k_3\in\RR$, $k_3\ne0$.
	In that case the Gau{\ss} curvature is $-\frac{k_1^2}{k_3}$.
\end{lemma}
\noindent Part~(ii) of Lemma~\ref{la:metric.f.constant} is stated here for completeness and, for conciseness, we abstain from listing the explicit homothetic vector fields in this case as we shall not need them. Note that we will need part~(i) exclusively, for the cases D2b and D3 in Sections~ \ref{sec:D2a.results} and~\ref{sec:D3.results}, respectively.

For the case of non-constant curvature we prove the following lemma. A comment similar to that before Lemma~\ref{la:non-generic.algebra.f1f} is appropriate: The correct formula in Lemma~\ref{la:D2b.D3.2D-hom-algebra} would read
$$ G(s_1)=k_3|\sin(k_1s_1+k_2)|^{\tfrac{C-2k_1}{k_1}}\,, $$
and the sign of $\sin(k_1s_1+k_2)$ would appear, accordingly, in the formula for the homothetic vector field. However, this sign would lead to unnecessarily complicated formulas for the c-projective vector fields. Moreover, the sign can be absorbed by a change of the parameter $k_2$, and so we are going to omit the absolute values for better readability and ease of notation.
\begin{lemma}\label{la:D2b.D3.2D-hom-algebra}
	Let $h=e^{C s_0}G(s_1)(ds_0^2+ds_1^2)$, with $C\ne0$, be a metric 
	of non-constant curvature admitting a homothetic vector field $u$ not
	proportional to $\partial_{s_0}$.
	Then
	$$
		G(s_1)=k_3\sin(k_1s_1+k_2)^{\tfrac{C-2k_1}{k_1}}\,,
	$$
	and $u$ is a linear combination of $\partial_{s_0}$ and
	$$
		w = e^{-k_1s_0}\left(
				\cos(k_1s_1+k_2)\partial_{s_0}
				-\sin(k_1s_1+k_2)\partial_{s_1}
			\right)\,,
	$$
	where $k_i\in\mathbb{R}$, $k_1,k_3\ne0$ and $k_1\ne\frac{C}{2}$. In fact, $w$ is Killing for $h$.
\end{lemma}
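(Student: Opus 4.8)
The plan is to use that $h=\Omega\,(ds_0^2+ds_1^2)$ with $\Omega=e^{\lambda s_0}G(s_1)$ is conformally flat, so that homotheties are constrained by a holomorphic structure, and then to pin down $G$ by a short Lie-algebraic argument. First I would write $u=u^1\del_{s_0}+u^2\del_{s_1}$ and expand $\lie_u h=Ch$ into its three independent components. The off-diagonal component gives $u^1_{s_1}+u^2_{s_0}=0$, while the difference of the two diagonal components gives $u^1_{s_0}=u^2_{s_1}$; together these are the Cauchy--Riemann equations, so $u^1,u^2$ are harmonic conjugates. Dividing the remaining diagonal component by $\Omega$ turns it into the single scalar constraint
\[
	\lambda u^1 + (\ln G)'(s_1)\,u^2 + 2u^1_{s_0} = C .
\]

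Next I would reduce to the Killing case. Since $\lie_{\del_{s_0}}h=\lambda h$ with $\lambda\ne0$, the field $\del_{s_0}$ is a proper homothety, and $u-\tfrac{C}{\lambda}\del_{s_0}$ is again of the above form but with $C=0$; it is nonzero and not proportional to $\del_{s_0}$ exactly when $u$ itself is not. So it suffices to analyse a nonzero Killing field $u$, and this will simultaneously show that the homothetic algebra is $\langle\del_{s_0},u\rangle$. Because $h$ has non-constant curvature, its Killing fields span an at most one-dimensional space \cite{BMM2008}, which is therefore $\langle u\rangle$. The bracket $[\del_{s_0},u]$ is again Killing (the Lie bracket of two homotheties is Killing), hence $[\del_{s_0},u]=a\,u$ for some constant $a$; as $\del_{s_0}$ has constant coefficients this reads $u^i_{s_0}=a\,u^i$, forcing the separated form $u=e^{a s_0}\big(\phi(s_1)\del_{s_0}+\chi(s_1)\del_{s_1}\big)$.

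With this form in hand the rest is integration. The Cauchy--Riemann relations give $\chi'=a\phi$ and $\phi'=-a\chi$, whence $\phi''=-a^2\phi$ and, after fixing a phase, $\phi=R\cos(as_1+\delta)$, $\chi=R\sin(as_1+\delta)$. Substituting into the scalar constraint with $C=0$ yields $(\lambda+2a)\phi+(\ln G)'\chi=0$, that is
\[
	(\ln G)' = -(\lambda+2a)\,\cot(as_1+\delta) ,
\]
and integrating produces $G=k_3\,\sin(as_1+\delta)^{-(\lambda+2a)/a}$. Writing $a=-k_1$ and $\delta=k_2$ (absorbing signs into $k_3$) gives exactly $G=k_3\sin(k_1s_1+k_2)^{(\lambda-2k_1)/k_1}$ and $u=e^{-k_1s_0}\big(\cos(k_1s_1+k_2)\del_{s_0}-\sin(k_1s_1+k_2)\del_{s_1}\big)$, which is the stated field $w$; it is Killing by construction. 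Non-constant curvature forces $a\ne0$ and $\lambda+2a\ne0$ (otherwise $(\ln G)'$ is constant and, by the preceding lemma, $G$ is exponential and $h$ flat), i.e.\ $k_1\ne0$ and $k_3\ne0$ with nonzero exponent.

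The main obstacle is the middle step: establishing the separated exponential form of $u$. The pedestrian route---differentiating the scalar constraint in $s_0$ and $s_1$ and eliminating $u^1,u^2$ by hand using harmonicity---produces an unwieldy system, so the real work is to notice the Lie-algebraic shortcut: that the bracket of two homotheties is Killing and that non-constant curvature caps the Killing algebra at dimension one, which together force $u$ to be an eigenvector of $\mathrm{ad}_{\del_{s_0}}$ and hence separated. One must also take care to exclude the degenerate sub-cases $a=0$ and $\lambda+2a=0$, which correspond precisely to the constant-curvature metrics already handled and must be discarded rather than counted as genuine solutions.
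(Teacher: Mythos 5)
Your proof is correct, and its skeleton coincides with the paper's: replace $u$ by $u-\tfrac{C}{\lambda}\partial_{s_0}$ to reduce to a nonzero Killing field, show that this Killing field is an eigenvector of $\mathrm{ad}_{\partial_{s_0}}$ so that its components separate as $e^{as_0}$ times functions of $s_1$, solve the Cauchy--Riemann-type pair of equations, and integrate the remaining scalar equation to obtain $G$ as a power of a sine. The one genuine difference lies in how the eigenvector relation is justified. The paper obtains $[u,\partial_{s_0}]=cu$ with $c\ne0$ by invoking the classification in \cite[Theorem~1]{BMM2008} and checking that only its cases (1a) and (2a) are compatible with the hypotheses; you instead derive $[\partial_{s_0},u]=au$ from two elementary facts --- the bracket of two homotheties is Killing, and non-constant curvature caps the Killing algebra at dimension one --- and you exclude the degenerate values $a=0$ and $\lambda+2a=0$ afterwards, by observing that each forces $(\ln G)'$ to be constant, hence $G$ exponential and $h$ flat by the preceding lemma, contradicting non-constant curvature. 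Your route is more self-contained: it only uses the ``two independent Killing fields imply constant curvature'' fact, which the paper itself quotes elsewhere, rather than the finer classification; the price is the extra case analysis at the end, which the paper avoids because the classification hands it $c\ne0$ directly. Both arguments are complete, and they produce the same solution up to the renaming of constants.
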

\begin{proof}
	Since $C\ne0$, $\partial_{s_0}$ is properly homothetic.
	The existence of $u$ as in the hypothesis, implies the existence of a
	Killing vector field. Without loss of generality we therefore assume $u$
	to be Killing.
	The components $u^0,u^1$ of $u=u^0\del_{s_0}+u^1\del_{s_1}$ and the function $G$ then need to satisfy
	the PDE system
	\begin{subequations}\label{eqn:Killing.u.G}
	\begin{align}
		\label{eqn:Killing.u.G.1}
		u^0_{s_0}-u^1_{s_1} &= 0
		\\
		\label{eqn:Killing.u.G.2}
		u^0_{s_1}+u^1_{s_0} &= 0
		\\
		\label{eqn:Killing.u.G.3}
		C\,u^0G+2Gu^1_{s_1}+u^1G_{s_1} &= 0
	\end{align}
	\end{subequations}
	In addition, one can make use of the classification in
	\cite[Theorem~1]{BMM2008}. It is easily seen that only the cases (1a)
	and (2a) of this classification have the properties implied by the
	hypothesis, particularly the commutator of any Killing vector field and
	any homothetic vector fields is proportional to the Killing vector field.
	We conclude that $[u,\del_{s_0}]=k_1u$ has to hold for some
	constant $k_1\ne0$, i.e.
	\begin{equation}\label{eqn:commutator.u.delx}
		u^0_{s_0}=-k_1u^0\,,\qquad u^1_{s_0}=-k_1u^1
	\end{equation}
	The system composed of~\eqref{eqn:Killing.u.G.1},
	\eqref{eqn:Killing.u.G.2} and~\eqref{eqn:commutator.u.delx} can be
	straightforwardly solved,
	$$
		u^0(s_0,s_1) = \xi\,e^{-k_1s_0}\cos(k_1s_1+k_2)\,,\qquad
		u^1(s_0,s_1) = -\xi\,e^{-k_1s_0}\sin(k_1s_1+k_2)\,,
	$$
	where $\xi\in\RR$.
	Resubstituting into~\eqref{eqn:Killing.u.G.3}, we have
	$$
		\diff{\ln(G)}{s_1} = (C-2k_1)\cot(k_1s_1+k_2)
	$$
	and then
	$$ G = k_3\,\exp\left(\frac{C-2k_1}{k_1}\,\ln\sin(k_1s_1+k_2)\right)
		=k_3\,\sin(k_1s_1+k_2)^{\frac{C}{k_1}-2}
	$$
	with $k_3\ne0$ since $G\ne0$.
\end{proof}

\subsection{Finalising the proof of Theorem~\ref{thm:degenerate.main}}\label{sec:degenerate-finalise}
We finalise the proof by computing the explicit c-projective algebras.

\subsubsection{Case D1}\label{sec:D1.results}
The 2D metric is
\[
	h = G(s_1)ds_0^2+\frac{ds_1^2}{G(s_1)}\,.
\]
We have three scenarios:
\begin{enumerate}
	\item Any homothetic vector field of $h$ is a multiple of the Killing
	vector field $\partial_{s_0}$.
	\item The metric has a $2$-dimensional homothetic algebra. In this
	scenario, w.l.o.g.,
	$$ G(s_1)=k_3(k_1 s_1+k_2)^{\frac{2(k_1+1)}{k_1}} $$
	for $k_1,k_3\ne0$ (Lemma~\ref{la:non-generic.algebra.f1f}).
	\item $G'''(s_1)=0$, but $G''(s_1)\ne0$, and the metric $h$ is of
	non-zero constant curvature, see Lemma~\ref{la:metric.f1f.constant}.
\end{enumerate}

\noindent Note that if $G''(s_1)=0$, then the metric $h$ is flat. In this case,
however, $g$ is already of constant HSC. Indeed, recall that $\dim(\homalg(h))\leq3$ in the case under consideration. If the homothetic algebra of
$h$ would be larger, its Gau{\ss} curvature would already be zero, and thus $g$ would have constant HSC due to Theorem~\ref{thm:CHSC}.
\smallskip

\noindent\textbf{Scenario 1 ($\dim(\homalg(h))=1$).}
We have the homothetic vector fields $u=\xi\partial_{s_0}$ for $h$, and thus obtain
$$ v^0(x_0)=-a_{00} $$
where $a_{00}$ is a free parameter. Moreover, after a straightforward integration, we arrive at
$$ v^1(x_1,s_0,s_1)=\xi s_1+\nu $$
where $\nu\in\RR$. Summarising, the c-projective algebra of $g$ is 3-dimensional and generated by
$$
	\partial_{x_0}\,,\qquad
	\partial_{x_1}\,,\qquad\text{and}\qquad
	s_1\partial_{x_1}+\partial_{s_0}\,.
$$
We thus obtain
$$ a_{00} = a_{01} = -a_{10} = -a_{11}\,, $$
and therefore conclude that $\partial_{x_1}$ and $s_1\partial_{x_1}+\partial_{s_0}$ are Killing vector fields, while
$\del_{x_0}$ is essential as $a_{01}\ne0$.
\smallskip

\noindent\textbf{Scenario 2 ($\dim(\homalg(h))=2$).}
The homothetic algebra of $h$ is parametrised by
$$ \xi\left( (k_1+2)s_0\partial_{s_0} - (k_1s_1+k_2)\partial_{s_1} \right) + \xi_0\del_{s_0} $$
where $\xi,\xi_0\in\RR$.
and we hence find
\[
	a_{11}=2\xi-a_{00}\,,
\]
and then
\[
	v^1 = 2\xi x_1-\xi_0s_0+\nu\qquad
	v^0 = 2\xi x_0-a_{00}\,.
\]
Therefore, the c-projective vector fields of $g$ are obtained as
\[
	v = (2\xi x_0-a_{00})\partial_{x_0}
		+(2\xi x_1-\xi_0s_1+\nu)\partial_{x_1}
		+(\xi(k_1+2)s_0+\xi_0)\partial_{s_0}
		-\xi(k_1s_1+k_2)\partial_{s_1}
\]
with parameters $\xi,\xi_0,a_{00},\nu\in\RR$.
Written separately, the associated generators of the c-projective symmetry
algebra are
$$
	2x_0\partial_{x_0}+2x_1\partial_{x_1}
			+(k_1+2)s_0\partial_{s_0}-(k_1s_1+k_2)\partial_{s_1}\,,
$$
$$
	\partial_{x_0}\,,\qquad
	\partial_{x_1}\,,\qquad\text{and}\qquad
	s_1\partial_{x_1}+\partial_{s_0}\,.
$$

\noindent\textbf{Scenario 3 ($\dim(\homalg(h))=3$).}
The metric $h$ admits a 3-dimensional homothetic algebra if and only if
$G(s_1)=\kappa s_1^2+\mu_1s_1+\mu_2$ with $\kappa\ne0$.
We obtain
$$ a_{11}=-a_{00}\,. $$
We proceed according to the subcases obtained in Lemma~\ref{la:metric.f1f.constant}.
For each subcase, we compute the induced c-projective vector fields analogously to the previous scenarios.\smallskip

\noindent\underline{Subcase $\Delta=0$:} Integrating the PDE system, we find
\[
	v^1 = \xi_1\bigg( \tfrac12 s_0^2(2\kappa s_1+\mu_1)-\frac2{2\kappa
	s_1+\mu_1} \bigg)  +\xi_0 s_1  +\nu
\]
and then
\[
	v = -a_{00}\,\del_{x_0}
		+\bigg(
			\xi_1\big( \tfrac12 s_0^2(2\kappa s_1+\mu_1)-\frac2{2\kappa
			s_1+\mu_1} \big)  +\xi_0 s_1  +\nu
		\bigg)\del_{x_1}
		+u\,.
\]
where $u$ is as in Lemma~\ref{la:metric.f1f.constant}.\smallskip

\noindent\underline{Subcase $\Delta>0$:} We find
\begin{align*}
	v^1 = \xi_0\,s_1
			&+\xi_1\,\sqrt{G}\bigg(
					\tfrac2{\sqrt{\Delta}}\cos(\tfrac12\sqrt{\Delta}s_0)
					+s_0\sin(\tfrac12\sqrt{\Delta}s_0)
				\bigg)
			\\
			&+\xi_2\,\sqrt{G}\bigg(
					s_0\cos(\tfrac12\sqrt{\Delta}s_0)
					-\tfrac2{\sqrt{\Delta}}\sin(\tfrac12\sqrt{\Delta}s_0)
				\bigg)
			+\nu
\end{align*}
and thus
\[
	v = -a_{00}\,\del_{x_0} + v^1\del_{x_1} + u\,,
\]
where $u$ is as in Lemma~\ref{la:metric.f1f.constant}.\smallskip

\noindent\underline{Subcase $\Delta<0$:} We find
\[
v^1 = \xi_0\,s_1
		+\xi_1\,\bigg(
				(s_0\Delta-2)\sqrt{\tfrac{G}{-\Delta}}\exp(\tfrac12 
				\sqrt{-\Delta} s_0)
		\bigg)
		+\xi_2\,\bigg(
				(s_0\Delta+2)\sqrt{\tfrac{G}{-\Delta}}\exp(-\tfrac12 
				\sqrt{-\Delta} s_0)
		\bigg)
		+\nu
\]
and then
\[
	v = -a_{00}\,\del_{x_0}
		+v^1\del_{x_1}
		+u\,,
\]
where $u$ is as in Lemma~\ref{la:metric.f1f.constant}.

\subsubsection{Case D2a}\label{sec:D2a.results}
The metric $h$ is as in the case D1 and we continue along the analogous three
cases.
We have four scenarios:
\begin{enumerate}
	\item Any homothetic vector field of $h$ is a multiple of the Killing
	vector field $\partial_{s_0}$.
	\item The metric has a $2$-dimensional homothetic algebra. In this
	scenario, w.l.o.g., $G(s_1)=\kappa(\mu_1 s_1+\mu_2)^{\frac{2(\mu_1+1)}{\mu_1}}$ for
	$\mu_1,\kappa\ne0$.
	\item $G'''(s_1)=0$, but $G''(s_1)\ne0$ and $G''(s_1)\ne-\frac9{d_1^2}$, such that the metric $h$ is of
	non-zero constant curvature (but $g$ is of non-constant HSC).
	\item $G''(s_1)=0$ and the metric $h$ is flat.
\end{enumerate}
\smallskip

\noindent\textbf{Scenario 1 ($\dim(\homalg(h))=1$).}
For generic $G$, the only homothetic vector fields of $h$ are ($\xi\in\RR$)
$$ u = \xi\del_{s_0}\,. $$
The integrability condition for $v^1$ is
$$ a_{11} = -2a_{00}\,, $$
and we find
\[
v = -a_{00}\partial_{x_0}
+(\xi s_1+\nu)\partial_{x_1}
+\xi\del_{s_0}
\]
with parameters $a_{00},\xi,\nu\in\RR$.
\smallskip

\noindent\textbf{Scenario 2 ($\dim(\homalg(h))=2$).}
According to Lemma~\ref{la:non-generic.algebra.f1f},
$$ G=k_3(k_1s_1+k_2)^{\tfrac{2(k_1+1)}{k_1}} $$
the homothetic vector fields of $h$ are parametrised by ($\xi,\xi_0\in\RR$)
$$ u=\xi_0\partial_{s_0}
	+\xi\,\big((k_1+2)s_0\del_{s_0}-(k_1s_1+k_2)\del_{s_1}\big)\,. $$
From Table~\ref{tab:integrability.Dx}, we infer the integrability condition
$$ \xi=0\,, $$
as only Killing vector fields of $h$ can be extended to c-projective vector fields of the metric~$g$.
We find ($a_{00},\xi_0,\nu\in\RR$)
\[
	v = -a_{00}\partial_{x_0}
		+(\xi_0 s_1+\nu)\partial_{x_1}
		+\xi_0\del_{s_0}
\]
and therefore the c-projective algebra of $g$ is generated by
$$ \partial_{x_0}\,,\quad
	s_1\partial_{x_1}+\del_{s_0}\,,\qquad
	\partial_{x_1}\,. $$
\smallskip

\noindent\textbf{Scenario 3 ($\dim(\homalg(h))=3$).}
We turn to the case when $h$ has a 3-dimensional homothetic algebra, which is analogous to Scenario~3 of case D1.
Using Lemma~\ref{la:metric.f1f.constant}, we obtain $a_{11}=-2a_{00}$.\smallskip

\noindent\underline{$\Delta=0$:}
\[
	v^1 = \xi_0\,s_1
			+\xi_1\,\left(
					\frac12s_0^2(2\kappa s_1+\mu_1)
					-\frac2{2\kappa s_1+\mu_1}
					\right)
			+\nu
\]
and then
\[
	v = -3a_{00}\del_{x_0}+v^1\del_{x_1}+u\,.
\]
\noindent\underline{$\Delta>0$:}
\begin{align*}
	v^1 = \xi_0\,s_1
		&+\xi_1\,\bigg(
			2\tfrac{\sqrt{G}}{\sqrt{\Delta}}\cos(\tfrac12\sqrt{\Delta} s_0)
			+\sqrt{G}s_0\sin(\tfrac12 s_0 \sqrt{\Delta})
		\bigg)
	\\
		&+\xi_2\,\bigg(
			\sqrt{G}s_0\cos(\tfrac12 s_0 \sqrt{\Delta})
			-2\tfrac{\sqrt{G}}{\sqrt{\Delta}}\sin(\tfrac12 s_0 \sqrt{\Delta})
		\bigg)
		+\nu
\end{align*}
and then
\[
	v = -3a_{00}\del_{x_0}+v^1\del_{x_1}+u\,.
\]

\noindent\underline{$\Delta<0$:}
\begin{align*}
	v^1 = \xi_0\,s_1
			&+\xi_1\,(s_0+\tfrac2{\sqrt{-\Delta}})
					\sqrt{G}\exp(\tfrac12\sqrt{-\Delta}s_0)
		\\
			&+\xi_2\,(s_0-\tfrac2{\sqrt{-\Delta}})
					\sqrt{G}\exp(-\tfrac12\sqrt{-\Delta}s_0)
		+\nu
\end{align*}
and then
\[
	v = -3a_{00}\del_{x_0}+v^1\del_{x_1}+u\,.
\]
In all three subcases, the vector field $u$ is as in Lemma~\ref{la:metric.f1f.constant}. The parameters are $a_{00},\xi_0,\xi_1,\xi_2,\nu\in\RR$.
\smallskip

\noindent\textbf{Scenario 4 ($\dim(\homalg(h))=4$).}
According to Lemma~\ref{la:metric.f1f.constant},
$$ G=\mu_1s_1+\mu_2\,, $$
and we need to distinguish the cases $\mu_1\ne0$ and $\mu_1=0$.\smallskip

\noindent\underline{Subcase $\mu_1\ne0$:}
The homothetic vector fields of $h$ are parametrised by
\begin{align*}
	u = \xi_0\del_{s_0}
	&+\xi_1 \bigg(
	\frac{\sin(\tfrac12\mu_1s_0)}{\sqrt{\mu_1s_1+\mu_2}}\del_{s_0}
	-\cos(\tfrac12\mu_1s_0)\sqrt{\mu_1s_1+\mu_2}\del_{s_1}
	\bigg)
	\\
	&+\xi_2 \bigg(
	\frac{\cos(\tfrac12\mu_1s_0)}{\sqrt{\mu_1s_1+\mu_2}}\del_{s_0}
	+\sin(\tfrac12\mu_1s_0)\sqrt{\mu_1s_1+\mu_2}\del_{s_1}
	\bigg)
	+\xi\,\del_{s_1}\,,
\end{align*}
where $\xi$ parametrises proper homothetic vector fields and the parameters $\xi_i$ describe the Killing vector fields.
Due to the integrability condition, we have
$$ \xi=0\,. $$
We obtain
\begin{align*}
 v^1 = \xi_0s_1
	&+\frac{\xi_1}{\mu_1}\,(\mu_1s_0\sin(\tfrac12\mu_1s_0)
					+2\cos(\tfrac12\mu_1s_0))\sqrt{\mu_1s_1+\mu_2}
	\\
	&+\frac{\xi_2}{\mu_1}\,(\mu_1s_0\cos(\tfrac12\mu_1s_0)
					-2\sin(\tfrac12\mu_1s_0))\sqrt{\mu_1s_1+\mu_2}
	+\nu
\end{align*}
where $\nu\in\RR$ is an integration constant. Moreover, we have
$$ v^0 = -a_{00} $$
and thus $g$ admits the 5-dimensional c-projective algebra generated by
$$ \del_{x_0}\,,\quad \del_{x_1}\,,\quad s_1\del_{x_1}+\del_{s_0} $$
as well as
\begin{multline*}	
	\big(
		(\mu_1s_0\sin(\tfrac12\mu_1s_0)
		+2\cos(\tfrac12\mu_1s_0))\sqrt{\mu_1s_1+\mu_2}
	\big)\del_{x_1}
	\\
	+\mu_1\,\frac{\cos(\tfrac12\mu_1s_0)}{\sqrt{\mu_1s_1+\mu_2}}\del_{s_0}
	+\mu_1\,\sin(\tfrac12\mu_1s_0)\sqrt{\mu_1s_1+\mu_2}\del_{s_1}
\end{multline*}
and
\begin{multline*}
	 \big(
		(\mu_1s_0\cos(\tfrac12\mu_1s_0)
		-2\sin(\tfrac12\mu_1s_0))\sqrt{\mu_1s_1+\mu_2}
	\big)\del_{x_1}
	\\
	-\mu_1\,\frac{\sin(\tfrac12\mu_1s_0)}{\sqrt{\mu_1s_1+\mu_2}}\del_{s_0}
	+\mu_1\,\cos(\tfrac12\mu_1s_0)\sqrt{\mu_1s_1+\mu_2}\del_{s_1}\,.
\end{multline*}

\noindent\underline{Subcase $\mu_1=0$:}
The homothetic vector fields of $h$ are parametrised by
$$ u = \xi\,(s_0\,\del_{s_0}+s_1\,\del_{s_1})
		+\xi_1\,\left( \frac{s_1}{\mu_2}\,\del_{s_0}-\mu_2s_0\del_{s_1} \right)
		+\xi_2\del_{s_0}+\xi_3\del_{s_1} $$
where $\xi$ again parametrises proper homothetic vector fields; the parameters $\xi_i$ describe the Killing vector fields.
Due to the integrability condition,
$$ \xi=0\,, $$
and we hence obtain
$$ v^1 = \xi_1\left( -\frac{\mu_2}{2}\,s_0^2+\frac1{2\mu_2}\,s_1^2 \right) +\xi_2\,s_1+ \nu $$
($\xi_i,\nu\in\RR$) and thus $g$ admits the 5-dimensional c-projective algebra generated by
$$ \del_{x_0}\,,\quad \del_{x_1}\,, $$
as well as
\[
	s_1\del_{x_1}+\del_{s_0}\,,\qquad
	\del_{s_1}\,,
\]
and
\[
	\left( -\frac{\mu_2}{2}\,s_0^2+\frac1{2\mu_2}\,s_1^2 \right)\del_{x_1}+\frac{s_1}{\mu_2}\,\del_{s_0}-\mu_2s_0\del_{s_1}\,.
\]

\subsubsection{Case D2b}\label{sec:D2b.results}
The metric $h$ is
$$ h = e^{-(\beta+2)s_0}G(s_1)(ds_0^2+ds_1^2)\,. $$
We thus have the following three scenarios:
\begin{enumerate}
	\item Any homothetic vector field of $h$ is a multiple of the proper
	homothetic vector field $\partial_{s_0}$.
	\item The metric $h$ admits a 2-dimensional homothetic algebra and is as
	in Lemma~\ref{la:D2b.D3.2D-hom-algebra} with $C=-(\beta+2)$.
	\item The metric $h$ has constant curvature. Due to Lemma~\ref{la:metric.f.constant}, it follows that $h$ is flat.
\end{enumerate}
\smallskip

\noindent\textbf{Scenario 1 ($\dim(\homalg(h))=1$).}
The metric $h$ in this scenario only admits the proper homothetic vector
fields $u = \xi\del_{s_0}$ ($\xi\in\RR$).
Following the same steps as before, we find $v^0=\xi$ and the PDE system for
$v^1(x_1,s_0,s_1)$,
\[
	\diff{v^1}{x_1}=(\beta+2)\xi\,,\qquad
	\diff{v^1}{s_0}=0\,,\quad
	\diff{v^1}{s_1}=0\,,
\]
implying
\[
	v^1 = \xi(\beta+2)x_1+\nu\,.
\]
We therefore arrive at
\[
	v = \xi(\del_{x_0}+(\beta+2)x_1\del_{x_1}+\del_{s_0})
		+ \nu\del_{x_1}
\]
($\xi,\nu\in\RR$). Thus the c-projective algebra of $g$ is 2-dimensional.
\smallskip

\noindent\textbf{Scenario 2 ($\dim(\homalg(h))=2$).}
If the homothetic algebra is 2-dimensional, we use
Lemma~\ref{la:D2b.D3.2D-hom-algebra} with $C=-(\beta+2)$.
We have
$$ u = \xi\del_{s_0}+\xi_1 w\,, $$
($w$ as in Lemma~\ref{la:D2b.D3.2D-hom-algebra}) and obtain, if $k_1\ne-(\beta+2)$,
$$ v^1 = \xi_1\,\frac{k_1k_3}{(k_1+\beta+2)(\beta+2)}\,
\sin(k_1s_1+k_2)^{-\frac{k_1+\beta+2}{k_1}}\,e^{-s_0(k_1+\beta+2)}
-\xi\,(\beta+2)x_1+\nu $$
where $\nu\in\RR$. If $k_1=-(\beta+2)$, we obtain
\begin{align*}
	v^1 &= \xi_1\,\frac{k_3}{\beta+2}\,\bigg( s_0(\beta+2)-\ln\sin((\beta+2)s_1 -k_2) \bigg)
	-\xi\,(\beta+2)x_1+\nu
\end{align*}
where again $\nu\in\RR$.
The c-projective algebra of $g$ is therefore 3-dimensional. If $k_1\ne-(\beta+2)$, it is generated by
$$ \del_{x_1}\,,\quad
	\del_{x_0}
	-(\beta+2)x_1\del_{x_1}
	+\del_{s_0} $$
and
\begin{multline*}
	\frac{k_1k_3}{(k_1+\beta+2)(\beta+2)}\,
		\sin(k_1s_1+k_2)^{-\frac{k_1+\beta+2}{k_1}}\,
		e^{-s_0(k_1+\beta+2)}\del_{x_1}
	\\
	+ e^{-k_1s_0}\cos(k_1s_1+k_2)\del_{s_0}
	- e^{-k_1s_0}\sin(k_1s_1+k_2)\del_{s_1}\,.
\end{multline*}
If, on the other hand, $k_1=-(\beta+2)$, then it is generated by
$$ \del_{x_1}\,,\quad
	\del_{x_0}
	-(\beta+2)x_1\del_{x_1}
	+\del_{s_0} $$
and
\begin{multline*}
	\frac{k_3}{\beta+2}\big( s_0(\beta+2)-\ln\sin((\beta+2)s_1 -k_2) \big)\,\del_{x_1}
	\\
	+ e^{(\beta+2)s_0}\cos((\beta+2)s_1-k_2)\del_{s_0}
	+ e^{(\beta+2)s_0}\sin((\beta+2)s_1-k_2)\del_{s_1}\,.
\end{multline*}
\smallskip

\noindent\textbf{Scenario 3 ($\dim(\homalg(h))=4$).}
In this case $h$ is flat and, due to Lemma~\ref{la:metric.f.constant},
$$ G = e^{\mu_1s_1} $$
and the homothetic algebra of $h$ is parametrised by
\begin{align*}
	u &= \xi_0\del_{s_0}
		+\xi_1\,(\mu_1\del_{s_0}+(\beta+2)\del_{s_1})
	\\
	&	+\xi_2\,(\exp(\tfrac{s_0}{2}(\beta+2)-\tfrac12\mu_1s_1)\sin(\tfrac12\mu_1s_0+\tfrac12s_1(\beta+2))\del_{s_0}
	\\
	&\qquad		-\exp(\tfrac12s_0(\beta+2)-\tfrac12\mu_1s_1)\cos(\tfrac12\mu_1s_0+\tfrac12s_1(\beta+2))\del_{s_1})
	\\
	&	+\xi_3\,(\exp(\tfrac12s_0(\beta+2)-\tfrac12\mu_1s_1)\cos(\mu_1s_0+\tfrac12s_1(\beta+2))\del_{s_0}
	\\
	&\qquad		+\exp(\tfrac12s_0(\beta+2)-\tfrac12\mu_1s_1)\sin(\tfrac12\mu_1s_0+\tfrac12s_1(\beta+2))\del_{s_1})
\end{align*}
where $\xi_i\in\RR$.
We hence obtain ($\xi_i,\nu\in\RR$)
\begin{align*}
	v^1 &= \nu -\xi_0\,(\beta+2)x_1
	\\
	&\quad +\xi_2\exp(\tfrac12\mu_1s_1-\tfrac12s_0(\beta+2))
			\frac{
				 2\mu_1\sin(W)(\beta+2)
				 -\cos(W)\,((\beta+2)^2-\mu_1^2)
			}{
				((\beta+2)^2+\mu_1^2)(\beta+2)
			}
	\\
	&\quad +\xi_3\exp(-\tfrac12(\beta+2)s_0+\tfrac12\mu_1s_1)
			\frac{
				2\mu_1\cos(W)(\beta+2)
				-\sin(W)(\mu_1^2-(\beta+2)^2)
			}{
				((\beta+2)^2+\mu_1^2)(\beta+2)
			}
\end{align*}
with $W:=\frac12s_1(\beta+2)+\frac12\mu_1s_0$.
Moreover, we obtain the restriction
$$ a_{00}=-\xi_0 $$
and thus find a 5-dimensional c-projective algebra for $g$, parametrised by
\[
	v = -\xi_0\del_{x_0}+v^1\del_{x_1}+u
\]
($\xi_0,\xi_1,\xi_2,\xi_3,\nu\in\RR$).

\subsubsection{Case D3}\label{sec:D3.results}
The metric $h$ is
$$ h = e^{-3s_0}G(s_1)(ds_0^2+ds_1^2)\,. $$
We thus have the following three scenarios:
\begin{enumerate}
	\item Any homothetic vector field of $h$ is a multiple of the proper
	homothetic vector field $\partial_{s_0}$.
	\item The metric $h$ admits a 2-dimensional homothetic algebra and is as
	in Lemma~\ref{la:D2b.D3.2D-hom-algebra} with $C=-3$.
	\item The metric $h$ has constant curvature. Due to Lemma~\ref{la:metric.f.constant}, it follows that $h$ is flat.
\end{enumerate}
\smallskip

\noindent\textbf{Scenario 1 ($\dim(\homalg(h))=1$).}
The metric $h$ in this scenario only admits the proper homothetic vector
fields $u = \xi\del_{s_0}$ ($\xi\in\RR$).
Following the same steps as before, we find $v^0=\xi$ and the PDE system for
$v^1(x_1,s_0,s_1)$,
\[
	\diff{v^1}{x_1}+3\xi=0\,,\qquad
	\diff{v^1}{s_0}=0\,,\quad
	\diff{v^1}{s_1}=0\,,
\]
and therefore we arrive at ($\xi,\nu\in\RR$)
\[
	v = \xi(\del_{x_0}+3x_1\del_{x_1}+\del_{s_0})
		+ \nu\del_{x_1}
\]
and thus the c-projective algebra of $g$ is 2-dimensional.
\smallskip

\noindent\textbf{Scenario 2 ($\dim(\homalg(h))=2$).}
If the homothetic algebra is 2-dimensional, due to Lemma~\ref{la:D2b.D3.2D-hom-algebra},
$$ u = \xi\del_{s_0}+\xi_1 w $$
with $C=-3$. We arrive at ($\xi,\xi_1,\nu\in\RR$)
$$ v^1
  = \xi_1\frac{k_1k_3}{3(k_1+3)}\,e^{-(k_1+3)s_0}\,
  			\sin(k_1s_1+k_2)^{-\frac{k_1+3}{k_1}}
  -3\xi\,x_1
  +\nu\,, $$
if $k_1\ne-3$, and
$$ v^1 = \xi_1\,\frac{k_3}{3}\,\bigg( 3s_0-\,\ln\sin(3s_1 -k_2) \bigg)
	-\xi\,(\beta+2)x_1+\nu $$
if $k_1=-3$.
The c-projective algebra of $g$ is therefore 3-dimensional. It is generated by
$$ \del_{x_1}\,,\qquad
	\del_{x_0}-3x_1\del_{x_1}+\del_{s_0} $$
and, if $k_1\ne-3$,
\[
	\frac{k_1k_3}{3(k_1+3)}\,e^{-(k_1+3)s_0}\,
		\sin(k_1s_1+k_2)^{-\frac{k_1+3}{k_1}}\del_{x_1}
	+ e^{-k_1s_0}\cos(k_1s_1+k_2)\del_{s_0}
	- e^{-k_1s_0}\sin(k_1s_1+k_2)\del_{s_1}\,.
\]
If $k_1=-3$, the third generator is instead
\[
	\frac{k_3}{3}\big( 3s_0-\ln\sin(3s_1-k_2) \big)\,\del_{x_1}
	\\
	+ e^{3s_0}\cos(3s_1-k_2)\del_{s_0}
	+ e^{3s_0}\sin(3s_1-k_2)\del_{s_1}\,.
\]
Note that w.l.o.g.~$\sin(k_1s_1+k_2)>0$ as per the comment before Lemma~\ref{la:D2b.D3.2D-hom-algebra}.
\smallskip

\noindent\textbf{Scenario 3 ($\dim(\homalg(h))=4$).}
In this case $h$ is flat and the homothetic algebra is analogous to that of  
Scenario~3 of the case D2b, where we formally replace $\beta=1$.
Proceeding as before, we find first
\begin{align*}
	v^1 &= \nu-3\xi_0\,x_1
	\\
	&\quad	+\xi_2\frac{
				(\mu_1^2-9)
				\cos(\tfrac12 \mu_1 s_0+\tfrac32 s_1)
				+6\mu_1\sin(\tfrac12\mu_1 s_0+\tfrac32 s_1)
			}{
				3(\mu_1^2+9)
			}\,
			\exp(\tfrac12\mu_1s_1-\tfrac32 s_0)
	\\
	&\quad	+\xi_3\frac{
			6\mu_1
			\cos(\tfrac12 \mu_1 s_0+\tfrac32 s_1)
			-(\mu_1^2-9)\sin(\tfrac12\mu_1 s_0+\tfrac32 s_1)
		}{
			3(\mu_1^2+9)
		}\,
		\exp(\tfrac12\mu_1s_1-\tfrac32 s_0)
\end{align*}
and then
\[
	v = \xi_0\del_{x_0}+v^1\del_{x_1}+u\,,
\]
where the parameters are $\xi_0,\xi_1,\xi_2,\xi_3,\nu\in\RR$.
The c-projective algebra of $g$ therefore is 5-dimensional.
This concludes the proof of Theorem~\ref{thm:degenerate.main}.

\section{Final remarks}
In the present paper we have obtained the full c-projective algebras for all
K\"ahler surfaces with essential c-projective vector fields, i.e.~for the
K\"ahler metrics described in~\cite{BMMR2015}.
We have, in particular, found that the c-projective vector fields in the case
of degenerate type metrics D1--D3 arise from c-projective vector fields
of a 2-dimensional metric $h$ involved in these metrics, which likely bears
some significance for practical applications. Moreover, this phenomenon
should be expected to arise also in higher dimension, which might be useful
for extending the results obtained here to higher dimensions.

We have seen in Theorem~\ref{thm:CHSC} that, while covering all metrics with
essential c-projective vector fields, the list of metrics in~\cite{BMMR2015}
does still contain metrics different from those of interest. Moreover, one
might argue that the list in~\cite{BMMR2015} describes the metrics only up to a c-projective transformation (and still in a non-sharp way). On another level,
one might ask for a description up to isometric transformations. The authors intend to address this problem in an upcoming paper, which will be
facilitated by the results obtained here.

\appendix
\section{Kähler metrics with constant HSC}

This appendix provides additional material that supplements the main body of the paper. It has two parts: we begin with an explicit realisation of the 
c-projective algebra of the Fubini-Study metric in complex dimension~2. The 
second part is dedicated to the c-projective equivalence of Kähler surfaces 
with constant HSC.

\subsection{Fubini-Study metric}\label{app:fubini-study}

In Example~\ref{ex:fubini-study} the c-projective algebra of the Fubini-Study
metric has been discussed. It is isomorphic to $\mathfrak{sl}(3,\CC)$.
Here we obtain a realisation of $\mathfrak{sl}(3,\CC)$ in terms of vector
fields on $\CC\mathbb{P}^2$.
By a straightforward computation, one obtains the c-projective connection
associated to $(\CC\mathbb{P}^2,J,g)$, given by
\begin{equation}\label{eqn:c.proj.conn.Fubini}
	\left\{
	\begin{array}{l}
		y_x^2s_{xx}-y_xs_xy_{xx}+t_xy_{xx}+s_{xx}=0
		\\
		-t_xy_xy_{xx}+y_x^2t_{xx}-s_xy_{xx}+t_{xx}=0
	\end{array}
	\right.
\end{equation}
in the coordinates of~\eqref{eqn:Fubini.metric.local}. Consider the second jet space $J^2(1,3)$ with coordinates  $$(x,y,s,t,y_x,s_x,t_x,y_{xx},s_{xx},t_{xx})\,.$$ Since we are working locally, we can think of $J^2(1,3)$ as $\mathbb{R}^{10}$. Thus, we can interpret \eqref{eqn:c.proj.conn.Fubini} as an $8$-dimensional variety in $J^2(1,3)\simeq\mathbb{R}^{10}$. It is well known that any vector field $X$ on (an open set of) $\mathbb{R}^4$ can be prolonged to a vector field $X^{(2)}$ on (an open set of) $J^2(1,3)$. The vector field $X$ is a \emph{point symmetry} of \eqref{eqn:c.proj.conn.Fubini} if its local flow sends solutions of \eqref{eqn:c.proj.conn.Fubini} into solutions: the condition for the vector field $X$ to be a point symmetry of \eqref{eqn:c.proj.conn.Fubini} is that $X^{(2)}$ vanishes on \eqref{eqn:c.proj.conn.Fubini}. Since c-projective vector fields are vector fields preserving $J$-planar curves (i.e., solutions to \eqref{eqn:c.proj.conn.Fubini}), they coincide with the set of the point symmetries of \eqref{eqn:c.proj.conn.Fubini}. 
The point symmetries of this system can be studied using Lie symmetry
techniques \cite{olver2000applications}, and one thus obtains the generators of
its c-projective algebra. These are the following $16$ vector
fields:
\begin{equation*}
	\begin{array}{llll}
		(x^2-y^2)\partial_x +2xy\partial_y + (xs-yt)\partial_s +
		(xt-ys)\partial_t \,\,, & \, y\partial_x-x\partial_y \,\,, & \,
		x\partial_x+y\partial_y \,\,, & \, \partial_x\,,
		\\
		\\
		-2xy\partial_x +(x^2-y^2)\partial_y - (xt+ys)\partial_s +
		(xs-yt)\partial_t \,\,, & \, y\partial_s-x\partial_t \,\,, & \,
		x\partial_s+y\partial_t \,\,, & \, \partial_y\,,
		\\
		\\
		-(xt+ys)\partial_x + (xs-yt)\partial_y - 2st\partial_s +
		(s^2-t^2)\partial_t \,\,, & \, t\partial_s-s\partial_t \,\,, & \,
		s\partial_s+t\partial_t \,\,, & \, \partial_s\,,
		\\
		\\
		(xs-yt)\partial_x + (xt+ys)\partial_y + (s^2-t^2)\partial_s +
		2st\partial_t \,\,, & \, t\partial_x-s\partial_y \,\,, & \,
		s\partial_x+t\partial_y \,\,, & \, \partial_t\,.
	\end{array}
\end{equation*}
These 16 vector fields generate the c-projective algebra
of~\eqref{eqn:Fubini.metric.local}. The c-projective algebra of any Kähler
metric of constant HSC is isomorphic to it.

\subsection{C-projective equivalence of Kähler metrics with constant HSC}

Here we provide some background material regarding the statement of 
Fact~\ref{fct:CHSC}. The following proposition seems to be considered 
folklore. Yet we have not been able to retrieve the original reference, and do 
not know if such an initial reference is in existence. According to hearsay a 
formal proof may exist, and might have been published in a Japanese journal 
in the second half of the 20th century. In fact, the proof is straightforward 
if we impose the additional assumption of Riemannian signature.
\begin{proposition}
	Let $(M,J,g)$ be a K\"ahler surface of constant HSC~$\kappa$, where $g$ 
	is of arbitrary signature.
	Then, locally, it is c-projectively equivalent to $\mathbb{C}\mathbb{P}^2$ 
	with the Fubini-Study metric.
\end{proposition}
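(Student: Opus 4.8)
The plan is to prove the stronger, signature-independent statement that $g$ is \emph{c-projectively flat}, i.e.~that its c-projective structure $(M,J,[\nabla])$ is locally isomorphic, as a c-projective manifold, to the flat homogeneous model $(\CC\mathbb{P}^2,J,[\nabla_{FS}])$ carried by the Fubini--Study metric, the isomorphism being a local biholomorphism matching $J$-planar curves. Since two Kähler metrics are c-projectively equivalent precisely when they induce the same c-projective structure, this yields the claim. I would reduce local c-projective flatness to the vanishing of the c-projective Weyl (harmonic curvature) tensor $W$, the c-projective analogue of the projective Weyl tensor. Recall that for Kähler metrics of complex dimension $n\ge2$ the tensor $W$ is the complete obstruction to c-projective flatness (see e.g.~\cite{CEMN2020,FKMR2010,kruglikov_2016}), and here $n=2$.

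First I would write down the curvature tensor of a Kähler metric of constant HSC. Constancy of the holomorphic sectional curvature forces the full Riemann tensor into the algebraic ``complex space form'' shape
\begin{equation*}
	R(X,Y,Z,W) = \frac{\kappa}{4}\Big[
		g(X,W)g(Y,Z)-g(X,Z)g(Y,W)
		+g(JX,W)g(JY,Z)-g(JX,Z)g(JY,W)
		+2\,g(JX,Y)g(JZ,W)
	\Big],
\end{equation*}
a formula whose derivation is purely pointwise and algebraic and therefore insensitive to the signature of $g$. The c-projective Weyl tensor $W$ is obtained from $R$ by subtracting the components built from the c-projective Schouten (Rho) tensor, i.e.~it is the totally trace-free part of $R$ adapted to the c-projective structure. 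A direct substitution of the above expression shows that $R$ coincides exactly with this Schouten contribution, so that $W=0$ identically. This computation is algebraic in $R$, $g$, $J$ and $\kappa$ and does not reference the signature.

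With $W=0$ established, I would invoke the rigidity of flat Cartan geometries: a c-projective manifold of complex dimension $n\ge2$ whose harmonic curvature $W$ vanishes has flat normal c-projective Cartan connection and is therefore locally isomorphic to the homogeneous model, namely $\CC\mathbb{P}^n$ with the c-projective structure of the Fubini--Study metric \cite{CEMN2020,FKMR2010}. As the Fubini--Study metric itself has constant HSC, its c-projective Weyl tensor also vanishes, so both structures are flat and hence locally equivalent. Alternatively, one may argue entirely within the framework of the present paper: by Lemma~\ref{la:mobility}(i) constant HSC is equivalent to the maximal degree of mobility $D(g)=(n+1)^2$, and the prolongation of the metrizability equation is a linear connection on a real rank-$(n+1)^2$ bundle whose parallel sections are exactly the mobility solutions; a solution space of the maximal dimension $(n+1)^2$ forces this prolongation connection to be flat, which again is equivalent to local c-projective flatness and hence to local equivalence with Fubini--Study.

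The main obstacle is concentrated in the passage from $W=0$ to local equivalence with the model. In Riemannian signature this step is elementary: the complex space forms are classified into the three standard models (of positive, zero and negative $\kappa$), and one checks directly that each is c-projectively flat, so the assertion follows by inspection. For arbitrary signature I would deliberately avoid classifying the indefinite pseudo-Kähler space forms; instead the entire weight of the argument rests on the Cartan/tractor rigidity theorem, which is signature-agnostic. The one point requiring care is that $W$ really is the \emph{complete} local obstruction in our dimension --- this holds because $n=2\ge2$ (for $n=1$, i.e.~real dimension two, a different curvature quantity would govern flatness), so the standard c-projective BGG machinery applies verbatim.
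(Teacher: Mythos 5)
Your route is genuinely different from the paper's. The paper first extends Bochner's isometric classification of constant-HSC K\"ahler surfaces to arbitrary signature (by inspecting the proof in \cite{Bochner1947}, which yields explicit normal forms with signs $\epsilon_i$) and then checks in coordinates that all of these normal forms share the c-projective connection~\eqref{eqn:c.proj.conn.Fubini} of the Fubini--Study metric; you avoid classification entirely and argue via a vanishing curvature obstruction plus rigidity of flat Cartan geometries. Your strategy is attractive because it is signature-agnostic by design and yields no normal forms to verify; the paper's strategy is heavier-handed but produces the explicit models (which are reused elsewhere). However, as written your argument has a genuine gap, and it sits exactly at the point you flagged.

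The gap is the claim that for $n=2$ the c-projective Weyl tensor is the complete obstruction to flatness ``because $n=2\ge2$\,, so the standard c-projective BGG machinery applies verbatim'', with only $n=1$ exceptional. This is false: in c-projective geometry the exceptional dimension is precisely $n=2$. Exactly as for real projective structures on surfaces, one harmonic-curvature component degenerates there from a Weyl-type (second-order) invariant to a Cotton--York-type (third-order) invariant: for $n=2$ the harmonic curvature consists of the Nijenhuis torsion, the $(1,1)$-part of the Weyl curvature, \emph{and} a Cotton-type tensor of $(2,0)$-form type which replaces the $(2,0)$-Weyl curvature (a Kostant-type computation for $\mathfrak{sl}(3,\CC)$ with abelian nilradical $\CC^2$ shows that $H^2$ is concentrated in homogeneity three; cf.~\cite{CEMN2020}). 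So ``$W=0$ implies flat'' fails for general c-projective surfaces, and your rigidity step is unjustified as stated. The proof can be repaired, but only by using the K\"ahler hypothesis once more: for a (pseudo-)K\"ahler metric the curvature two-form is of type $(1,1)$ and the Rho tensor is Hermitian, and the contracted second Bianchi identity gives $\nabla_a P_{b\bar c}=\nabla_b P_{a\bar c}$, so the entire $(2,0)$-part of the Cartan curvature --- in particular the Cotton-type harmonic component --- vanishes identically; only after this observation is the $(1,1)$-Weyl tensor the full obstruction and your conclusion follows. Supplying this step (or replacing it by a citation of the classical fact that a K\"ahler metric of complex dimension $n\ge2$ is c-projectively flat if and only if it has constant HSC) is necessary. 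The same looseness affects your fallback via mobility: flatness of the prolongation connection of the metrisability equation is not by itself ``equivalent to local c-projective flatness'', since that connection differs from the normal tractor connection by curvature correction terms, so identifying the two flatness statements again requires an argument.
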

\begin{proof}
	Let us assume first that $g$ were a Kähler metric as in the hypothesis, 
	but specifically of Riemannian signature.
	Then it is a well-known fact that~$g$ is locally isometric to (see 
	\cite{GV1979,Bochner1947}),
	\begin{itemize}
		\item
		if $\kappa>0$, to a multiply of the Fubini-Study metric,
		\begin{equation}\label{eqn:Fubini.metric.local.alt}
			\frac{4}{\kappa}
			\frac{
				(1+\sum_iz_i\bar{z}_i)\sum_jdz_jd\bar{z}_j
				-\sum_{i,j}\bar{z}_jz_idz_jd\bar{z}_i
			}{
				(1+\sum_iz_i\bar{z}_i)^2
			}\,,
		\end{equation}
		\item
		if $\kappa<0$, to a multiple of the Bergman metric,
		\begin{equation}\label{eqn:Bergman.metric.local} 
			-\frac{4}{\kappa}
			\frac{
				(1-\sum_iz_i\bar{z}_i)\sum_jdz_jd\bar{z}_j
				+\sum_{i,j}\bar{z}_jz_idz_jd\bar{z}_i
			}{
				(1-\sum_iz_i\bar{z}_i)^2
			}\,,
		\end{equation}
		\item
		if $\kappa=0$, to the Euclidean metric,
		\begin{equation}\label{eqn:eucl.metric.local}
			\sum_idz_i d\bar{z}_i\,.
		\end{equation}
	\end{itemize}
	Let us now turn to non-Riemannian signature. An inspection of the proof 
	in~\cite{Bochner1947} shows that the signature does not play a crucial 
	role in it. One thereby verifies the following, more general statement.
	Let $g$ be a K\"ahler metric as in the hypothesis, of arbitrary 
	signature. Then it is locally isometric,
	\begin{itemize}
		\item if $\kappa>0$, to
		\begin{equation}\label{eqn:Fubini.metric.local.mod} 
		\frac{4}{\kappa}
		\frac{(1+\sum_i\epsilon_iz_i\bar{z}_i)\sum_j\epsilon_jdz_jd\bar{z}_j- 
		\sum_{i,j}\epsilon_j\epsilon_i\bar{z}_jz_idz_jd\bar{z}_i}{(1+\sum_i\epsilon_iz_i\bar{z}_i)^2}\,,
		 \quad \text{for some}\,\, \epsilon_i\in\{-1,1\}\,,
		\end{equation}
		\item if $\kappa<0$, to
		\begin{equation}\label{eqn:Bergman.metric.local.mod} 
		-\frac{4}{\kappa}
		\frac{(1-\sum_i\epsilon_iz_i\bar{z}_i)\sum_j\epsilon_jdz_jd\bar{z}_j 
		+\sum_{i,j}\epsilon_j\epsilon_i\bar{z}_jz_idz_jd\bar{z}_i}{(1-\sum_i\epsilon_iz_i\bar{z}_i)^2}\,,
		 \quad \text{for some}\,\, \epsilon_i\in\{-1,1\}\,,
		\end{equation}
		\item if $\kappa=0$, to
		\begin{equation}\label{eqn:eucl.metric.local.mod}
			\sum_i\epsilon_idz_i d\bar{z}_i\,, \quad \text{for some}\,\, 
			\epsilon_i\in\{-1,1\}\,.
		\end{equation}
	\end{itemize}
	Note that the metrics \eqref{eqn:Fubini.metric.local.mod}, 		
	\eqref{eqn:Bergman.metric.local.mod} and 		
	\eqref{eqn:eucl.metric.local.mod} indeed are generalisations 
	of~\eqref{eqn:Fubini.metric.local.alt}, \eqref{eqn:Bergman.metric.local} 
	and~\eqref{eqn:eucl.metric.local}, respectively.
	The signature is determined by the constants $\epsilon_i$.
	
	The crucial observation now is that the metrics above are c-projectivly 
	equivalent to each other. This can be confirmed by checking that they 
	share the same c-projective connection. Indeed, in coordinates $(x,y,s,t)$ where $z_1=x+iy$ and $z_2=s+it$, the c-projective connection of~\eqref{eqn:Fubini.metric.local.mod}, \eqref{eqn:Bergman.metric.local.mod} 
	and~\eqref{eqn:eucl.metric.local.mod} is identical 
	to~\eqref{eqn:c.proj.conn.Fubini}.
	This proves the claim.
\end{proof}
We remark that the statement of the proposition implies that the c-projective 
Lie algebras of all the 
metrics~\eqref{eqn:Fubini.metric.local.alt}--\eqref{eqn:eucl.metric.local.mod}
are isomorphic to $\mathfrak{sl}(3,\mathbb{C})$.

\section{Companion metrics for the metrics in Fact~\ref{fct:metrics}}
\label{app:companion.metrics}
The proofs of Theorems \ref{thm:CHSC}\,--\,\ref{thm:degenerate.main} do not
exclusively use the metrics given in Fact \ref{fct:metrics}, but rely on the
tensor $L$ introduced in Equation~\eqref{eqn:L}. Such $(1,1)$-tensors are
related to c-projectively equivalent metrics by the following proposition,
see~\cite{domashev_mikesh_1978} and also~\cite[Section~5]{CEMN2020}.
\begin{proposition}[\cite{domashev_mikesh_1978}]
	Two K\"ahler metrics (of arbitrary signature) $g, \hat{g}$ on a manifold
	$(M,J)$ (of real dimension $2n$) are c-projectively equivalent if and
	only if the tensor
	\begin{equation*}
		L^i_j = \left| \frac{\det \hat{g}}{\det g}
		\right|^{\frac{1}{2(n+1)}}\hat{g}^{il}g_{lj}
	\end{equation*}
	satisfies the equation
	\begin{equation}\label{eqn:nab.L}
		\nabla_k L_{ij} =
		\Lambda_i g_{jk} + \Lambda_j g_{ik} + \bar{\Lambda}_i \omega_{jk} +
		\bar{\Lambda}_j \omega_{ik}
	\end{equation}
	where
	\begin{equation}\label{eqn:nab.L.defs}
		\bar{\Lambda}_i = J\indices{^a_i} \Lambda_a
		\,, \qquad
		\omega_{ij} = J\indices{^a_i} g_{aj}
		\,,\qquad
		\Lambda_i = \nabla_i \Lambda
		\,, \qquad
		\Lambda = \frac{1}{4} \tr(L)\,.
	\end{equation}
\end{proposition}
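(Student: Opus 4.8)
The plan is to prove both implications by passing from the curve-level definition of c-projective equivalence to an algebraic statement about the difference of the two Levi-Civita connections. First I would recall (or re-derive directly from the $J$-planar condition $\nabla_{\dot\gamma}\dot\gamma\wedge\dot\gamma\wedge J\dot\gamma=0$) that two K\"ahler metrics $g,\hat g$ on $(M,J)$ are c-projectively equivalent if and only if their connections differ by
\begin{equation*}
	\hat\Gamma\indices{^l_{ki}}-\Gamma\indices{^l_{ki}}
	= \phi_k\delta\indices{^l_i}+\phi_i\delta\indices{^l_k}
		-\bar\phi_k J\indices{^l_i}-\bar\phi_i J\indices{^l_k}\,,
	\qquad \bar\phi_k=\phi_a J\indices{^a_k}\,,
\end{equation*}
for some $1$-form $\phi$; the $J$-terms are exactly what distinguishes the c-projective from the usual projective setting, and both $\nabla J=\hat\nabla J=0$ are used to see that this is the most general admissible difference tensor. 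Denote the right-hand side by $A\indices{^l_{ki}}$.

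For the forward implication I would differentiate $L$ directly. From $\hat\nabla\hat g=0$ one obtains $\nabla_k\hat g_{ij}=A\indices{^m_{ki}}\hat g_{mj}+A\indices{^m_{kj}}\hat g_{im}$, hence also $\nabla_k\hat g^{ij}$, while the conformal weight is governed by the clean identity $\nabla_k\ln\Phi=\tfrac1{n+1}A\indices{^m_{km}}=2\phi_k$, where $\Phi=|\det\hat g/\det g|^{1/(2(n+1))}$ and the evaluation $A\indices{^m_{km}}=2(n+1)\phi_k$ uses $\tr J=J\indices{^m_m}=0$ together with $J^2=-\Id$. Substituting these into $\nabla_k L_{ij}$, with $L_{ij}=\Phi\,g_{im}\hat g^{mn}g_{nj}$, and treating the $J$-terms by $\nabla J=0$, the expression collapses to the asserted form. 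Taking the $g$-trace then fixes the normalisation: since $J^2=-\Id$ and the $g$-antisymmetry of $\omega$ give $\bar\Lambda^i\omega_{ik}=\Lambda_k$, the right-hand side contracts to $4\Lambda_k$, forcing $\Lambda=\tfrac14\tr(L)$, $\Lambda_i=\nabla_i\Lambda$ and $\bar\Lambda_i=J\indices{^a_i}\Lambda_a$ exactly as stated; this is where the factor $\tfrac14$ arises, uniformly in $n$.

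Conversely, assuming $L$ solves the equation, I would reconstruct $\hat g$ algebraically by inverting $L=\Phi\,\hat g^{-1}g$ to $\hat g=\Phi\,gL^{-1}$, the conformal factor being determined intrinsically by $\det L=\pm\Phi^{-2}$, i.e.\ $\Phi=|\det L|^{-1/2}$ (we assume $L$ invertible, which holds generically). Setting $\phi_k=\tfrac12\nabla_k\ln\Phi$, equivalently reading $\phi$ off from $\Lambda$, I would then verify that $\hat g$ is symmetric and $J$-Hermitian---so that it is a genuine K\"ahler metric for $J$---and that the Levi-Civita connection of $\hat g$ differs from $\nabla$ by precisely the tensor $A$ above, whence $g$ and $\hat g$ share the same $J$-planar curves.

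The main obstacle is the integrability in this backward step: one must check that the algebraically reconstructed $\hat g$ is in fact parallel for its own Levi-Civita connection with difference tensor exactly of the $\phi$-type, and that the equation forces $L$ to be $J$-Hermitian, $L\indices{^i_j}J\indices{^j_k}=J\indices{^i_j}L\indices{^j_k}$, so that $\hat g$ is $J$-invariant. Both are obtained by differentiating the reconstruction relations once more and re-inserting the PDE, but controlling the $\omega$- and $\bar\Lambda$-contributions---so that no obstruction survives beyond the K\"ahler identities $\nabla J=0$ and $J^2=-\Id$---is the delicate part of the argument.
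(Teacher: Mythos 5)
The paper offers no proof of this proposition: it is quoted directly from \cite{domashev_mikesh_1978} (see also Section~5 of \cite{CEMN2020}), so the only meaningful comparison is with the standard literature argument---and your proposal is a correct reconstruction of exactly that argument. Your key identities all check out: the connection-difference formula $\hat\Gamma\indices{^l_{ki}}-\Gamma\indices{^l_{ki}}=\phi_k\delta\indices{^l_i}+\phi_i\delta\indices{^l_k}-\bar\phi_k J\indices{^l_i}-\bar\phi_i J\indices{^l_k}$ is the right characterisation of c-projective equivalence; the trace evaluation $A\indices{^m_{km}}=2(n+1)\phi_k$ (hence $\nabla_k\ln\Phi=2\phi_k$) is correct and does use $\tr J=0$ and $J^2=-\Id$; after substituting $\nabla_k\hat g^{ij}=-\hat g^{ia}A\indices{^j_{ka}}-\hat g^{ja}A\indices{^i_{ka}}$ the terms proportional to $\phi_kL_{ij}$ cancel and the $\bar\phi_k$-contribution drops because $\omega L$ is antisymmetric, leaving the asserted form with $\Lambda_i=-L\indices{^a_i}\phi_a$ (up to sign conventions for $\omega$); and the $g$-trace of the right-hand side is indeed $4\Lambda_k$, which pins down $\Lambda=\tfrac14\tr(L)$ uniformly in $n$, as you say. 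The relation $\Phi=|\det L|^{-1/2}$ is also correct. One simplification you are entitled to: in the proposition as stated, $L$ is built from the two \emph{given} K\"ahler metrics in both directions of the equivalence, so the $J$-Hermitian symmetry $LJ=JL$ and the $g$-self-adjointness of $L$ that you single out as the delicate part of the converse are automatic (both $g$ and $\hat g$ are Hermitian with respect to the same $J$); there is no abstract reconstruction problem. The backward implication therefore reduces to expressing $\nabla_k\hat g_{ij}$ through $\nabla_kL_{ij}$ via $\hat g=\Phi\,gL^{-1}$, inserting the PDE, and checking that $\hat\Gamma-\Gamma$ takes the displayed $\phi$-form with $\phi_a$ recovered from $\Lambda$ by inverting $L$; your outline covers this, and no obstruction beyond $\nabla J=0$ and $J^2=-\Id$ arises.
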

\noindent Note that the last relation in~\eqref{eqn:nab.L.defs} is not a
definition, but a consequence of~\eqref{eqn:nab.L}.
The prolongation equations for~\eqref{eqn:nab.L} are called \emph{Sinjukov
	equations}. The (special) Sinjukov equations~\eqref{eqn:sinjukov} are a
special case of these.\medskip

For the proofs of Theorems \ref{thm:CHSC}\,--\,\ref{thm:degenerate.main}, the
tensor $L$ is obtained using metrics $\hat{g}$ c-projectively equivalent to
the metrics in Fact~\ref{fct:metrics}. The explicit metrics $\hat{g}$ used in
each case are specified below. They have been taken
from~\cite[Theorem~1.5]{BMMR2015}. We mention also \cite{BMR2021}, where an
extension to higher dimension can be found. Note that the functions $\rho, F$
etc.~below are the same as the respective objects in Fact~\ref{fct:metrics}.

Via~\eqref{eqn:metric.construction}, a family of c-projectively equivalent
metrics generated by $g,\hat{g}$ can be constructed.
In the case that the metric $g$ is of non-constant HSC, this is the whole
c-projective class $[g]$. The reason is that Lemma~\ref{la:mobility} in this
case ensures that the degree of mobility is $D(g)=2$. For an explicit,
parametrised form of these families, for the metrics of
Fact~\ref{fct:metrics}, see also~\cite[Theorems 1.2 and 1.5]{BMMR2015}.
\subsection{Liouville type metrics}
\begin{equation*}
	\begin{split}
		\hat{g} = & \frac{1}{\rho_0^2 \rho_1^2 (\rho_0 - \rho_1)}
		\left[
		(\rho_0 - \rho_1)^2 \rho_0 \rho_1 \left(\frac{F_0^2}{\rho_0} dx_0^2 + 
		\epsilon \frac{F_1^2}{\rho_1} dx_1^2\right)
		\right.\\
		&\left.
		+
		\left( \frac{\rho_0'}{F_0}\right)\rho_1
		(ds_0 + \rho_0 ds_1)^2
		+\varepsilon \left( \frac{\rho_1'}{F_1}\right)\rho_0 (ds_0 + \rho_0 
		ds_1)^2
		\right]
	\end{split}
\end{equation*}

\subsection{Complex type metrics}
\begin{equation*}
	\begin{split}
		\hat{g} = & \frac{1}{\rho^2 \bar{\rho}^2 (\bar{\rho} - \rho)}
		\left[
		\frac{1}{4}
		(\bar{\rho} - \rho)^2 \rho \bar{\rho} \left(\frac{F^2}{\rho} dz^2 + 
		\epsilon \frac{\bar{F}^2}{\bar{\rho}} d\bar{z}^2\right)
		\right.\\
		&\left.
		+
		\left( \frac{1}{\bar{F}}\frac{d\bar{\rho}}{d\bar{z}}\right)^2\rho
		(ds_0 + \bar{\rho} ds_1)^2
		- \left( \frac{1}{F}\frac{d\rho}{dz}\right)^2\bar{\rho}
		(ds_0 + \rho ds_1)^2
		\right]
	\end{split}
\end{equation*}

\subsection{Degenerate type metrics}
\begin{equation*}
	\begin{split}
		\hat{g} = & \frac{1}{\rho-1}
		\left(
		\rho h
		+\frac{\rho F^2}{\rho-1} dx^2
		+ \frac{1}{\rho (\rho-1)}\left(\frac{\rho'}{F}\right)^2 \theta^2
		\right)
	\end{split}
\end{equation*}

\section*{Acknowledgements}
The authors thank Vladimir Matveev and Stefan Rosemann for insightful
discussions, remarks and for providing helpful references.
The authors acknowledge support through the project PRIN 2017 ``Real and
Complex Manifolds: Topology, Geometry and holomorphic dynamics'', the project
``Connessioni proiettive, equazioni di Monge-Amp\`ere e sistemi integrabili''
Istituto Nazionale di Alta Matematica (INdAM) and the MIUR grant
``Dipartimenti di Eccellenza 2018-2022 (E11G18000350001)''.
GM acknowledges the ``Finanziamento alla Ricerca
(53{\_}RBA17MANGIO)''.
GM is a member of GNSAGA of INdAM; AV acknowledges previous membership.
JS was a research fellow of INdAM.

\printbibliography

\end{document}